\documentclass[11pt,oneside,reqno]{amsart}
\usepackage[margin=1in]{geometry}
\providecommand{\keywords}[1]
{\small	
\usepackage[nopar]{lipsum} \textbf{\textit{Keywords---}} #1
}

\usepackage[toc]{appendix}
\usepackage{booktabs}
\usepackage{tabularx}
\usepackage{ltablex}
\keepXColumns

\usepackage{threeparttable}
\usepackage{array} 

\usepackage{lipsum}

\usepackage{tikz}
\usetikzlibrary{arrows.meta,positioning,calc}
\usepackage{graphicx} 

\usepackage{latexsym}
\usepackage[maxbibnames=99,citestyle=numeric-comp]{biblatex}
\usepackage{amsmath, amsthm, amssymb, latexsym,epsfig,amsthm, enumerate,multicol,wasysym}
\usepackage{bm}
\usepackage{mathtools}
\usepackage{color}
\usepackage{graphicx}
\usepackage{nccrules}
\usepackage{xfrac}
\usepackage{hyperref}
\hypersetup{
colorlinks,
citecolor=red,
filecolor=red,
linkcolor=red,
urlcolor=black
}
\usepackage{textcomp}
\usepackage{tikz}
\usepackage[dvipsnames]{xcolor}
\usepackage{comment, todonotes}
\usepackage{enumitem}

\usepackage{emptypage}
\usepackage{placeins}

\numberwithin{equation}{section}
\theoremstyle{plain}
\newtheorem{theorem}{Theorem}[section]
\newtheorem{lemma}[theorem]{Lemma}
\newtheorem{corollary}[theorem]{Corollary}
\newtheorem{proposition}[theorem]{Proposition}

\theoremstyle{definition}
\newtheorem{definition}[theorem]{Definition}

\newtheorem*{corollary*}{Corollary}
\newtheorem*{lemma*}{Lemma}

\newtheorem{case[theorem]}{Case}

\usepackage{caption}
\usepackage{mwe}
\usepackage{wrapfig}

\theoremstyle{remark}
\newtheorem{remark}[theorem]{Remark}

\numberwithin{equation}{section}

\usepackage[normalem]{ulem} 

\providecommand{\keywords}[1]
{
\small	
\textbf{\textit{Keywords---}} #1
}

\makeatletter
\renewcommand\@makefntext[1]{%
\noindent
\makebox[1em][r]{\@makefnmark}#1}
\makeatother 

\usepackage{ulem} 
\usepackage{float}

\newcommand{\Z}{\mathbb Z}
\newcommand{\N}{\mathbb N}
\newcommand{\R}{\mathbb{R}}
\newcommand{\C}{\mathbb C}
\newcommand{\cA}{\mathcal A}

\newcommand{\cB}{\mathcal B}

\newcommand{\cK}{\mathcal K}
\newcommand{\cH}{\mathcal H}

\newcommand{\cM}{\mathcal M}
\newcommand{\cI}{\mathcal I}

\newcommand{\cG}{\mathcal G}

\newcommand{\supp}{\mathrm{supp}}
\newcommand{\dist}{\mathrm{dist}}
\newcommand{\diam}{\mathrm{diam}}
\newcommand{\tr}{\mathrm{tr}}

\makeatletter
\renewcommand{\l@section}{\@tocline{1}{0pt}{2em}{}{}}
\renewcommand{\l@subsection}{\@tocline{2}{0em}{3.2em}{}{}}
\renewcommand{\l@subsubsection}{\@tocline{3}{0em}{4.0em}{}{}}
\makeatother

 \title{Localized frames on Euclidean balls}

\author{kevin Hughes}
\address{Independent Researcher, Edinburgh, UK}
\email{khughes.math@gmail.com}

\author{Arie Israel}
\address{Department of Mathematics, The University of Texas at Austin, Austin, TX, USA}
\email{arie@math.utexas.edu}

\author{Azita Mayeli}
\address{Department of Mathematics, CUNY, The Graduate Center, NY, USA}
\email{amayeli@gc.cuny.edu}

\thanks{A.~ Israel was supported by the Air Force Office of Scientific Research, under award FA9550-19-1-0005 and the National Science Foundation grant DMS-2453770.}

\thanks{A.~Mayeli was supported in part by the National Science Foundation grant DMS-2453769, the AMS-Simons Research Enhancement Grant, and the PSC-CUNY grants 67807-00 56 and 67807-00 57.}

\date{\today}
\subjclass[2020]{Primary 42C15; Secondary 42B10, 47A10, 28A80.}
\keywords{Wave packet; localized frames; Fourier localization; energy concentration;  limiting operators;  plunge region; Minkowski content.}

\begin{document}
\maketitle

\begin{abstract}
We construct explicit wave packet frames adapted to Euclidean balls and use them to obtain quantitative eigenvalue estimates for spatio--spectral limiting operators.
Let \(d\geq 2\), let \(B_d(R)\subset \R^d\) be the Euclidean ball of
radius \(R\), and let \(S\subset \R^d\) be a measurable set such that $\partial S$ has finite $(d-\eta)$-upper Minkowski content for $0 < \eta \leq 1$. 
We construct a unit-norm frame for \(L^2(B_d(R))\), with frame bounds depending only on the dimension $d$, whose elements are adapted to the radial and angular geometry of the ball.  
We prove quantitative Fourier localization estimates for this frame: Relative to \(S\), the frame decomposes into packets concentrated in \(S\), packets concentrated in \(\R^d\setminus S\), and an exceptional family whose cardinality is bounded explicitly in terms of \(R\), and the Minkowski content of \(\partial S\). 
As an application, we derive an upper bound for the plunge region of the spatio-spectral limiting operator 
associated to the sets $B_d(R)$ and $S$.
\end{abstract}

\setcounter{tocdepth}{1}

\section{Introduction}

\subsection{Background and motivation}
A fundamental problem in time--frequency analysis is the construction of systems of functions that are simultaneously well localized in both the space and frequency domains. Given two measurable sets $F, S\subset \R^d$, one seeks systems of functions that are concentrated in $F$ and whose Fourier transforms are concentrated in $S$. Such functions are naturally associated with spatio--spectral limiting operators. Let \(P_F : L^2(\R^d) \rightarrow L^2(\R^d)\) denote the orthogonal projection onto functions supported in $F$, 
\[
P_F f := \mathbf{1}_F f,
\]
and let \(B_S : L^2(\R^d) \rightarrow L^2(\R^d)\) denote the band--limiting projection,
\[
B_S f := \mathcal{F}^{-1}(\mathbf{1}_S \mathcal{F} f).
\]
The associated spatio--spectral limiting operator is \[
T_{F,S}= P_F B_S P_F,
\]
or equivalently \(B_S P_F B_S\), since the two operators have the same nonzero eigenvalues, counted with multiplicity.
If \(f\) is a normalized eigenfunction of \(T_{F,S}\), then its eigenvalue measures the proportion of its Fourier energy contained in \(S\). Thus, eigenvalues near \(1\) correspond to functions that are highly concentrated in both \(F\) and \(S\), while the number of eigenvalues above a fixed threshold measures the effective dimension of the class of such functions.

The eigenfunctions associated with the largest eigenvalues solve the corresponding concentration problem optimally, but they are generally difficult to construct or analyze explicitly. In the classical one--dimensional setting, where both $F$ and $S$ are intervals, the eigenfunctions are the prolate spheroidal wave functions. These functions have a rich mathematical theory, but in applications they are often computed numerically rather than represented by simple explicit formulas \cite{osipov2014evaluation}. In higher dimensions, numerical methods have been developed for the highly symmetric case in which $F$ and $S$ are Euclidean balls, where rotational symmetry reduces the problem to one--dimensional computations \cite{greengard2024}. For further discussion and additional references, see \cite{HughesIsraelMayeli_SampTA2025}. 

These considerations naturally motivate the search for alternative systems of functions that retain many of the localization properties of the eigenfunctions while admitting a more explicit construction. This approach was initiated in \cite{israel15eigenvalue} and developed in our papers for cubes in $\R^d$ \cite{israelmayeli2023acha} and for the disk in $\R^2$ \cite{HIM-disk}; see also \cite{HughesIsraelMayeli_SampTA2025}. In these papers, we introduced wave packet systems as flexible families of functions sharing many of the features of Gabor and wavelet systems. 
The present paper continues this program by constructing a wave packet frame on Euclidean balls in arbitrary dimensions.

\subsection{Main results} 
Our first main result constructs an explicit unit-norm frame $\{\psi_\nu\}_{\nu\in\cI}$ for $L^2(B_d(R))$, with frame bounds independent of $R$. Given a frequency set $S$ and a tolerance $\varepsilon>0$, the index set admits a partition $\cI=\cI_1\cup \cI_2\cup \cI_3$ such that the functions indexed by \( \cI_1\) are almost frequency--localized outside \(S\), those indexed by \(\cI_2\) are almost frequency--localized inside \( S\), and only a small exceptional family indexed by \(\cI_3\) remains. The size of this exceptional family is controlled by the geometric complexity of $\partial S$, measured in terms of its upper Minkowski content.

\begin{definition}[Upper Minkowski content]\label{def:M_content}
Let \(0<\eta\leq 1\), and let \(E \subset \R^d\). Denote the $t$-neighborhood of  $E$ by 
\[
E_t = \{ x \in \R^d : \dist(x,E) \leq t \}
.\] The \((d-\eta)\)-upper Minkowski content of $E$ is the quantity
\[
\cM^{*,d-\eta}(E)
:=
\limsup_{r\downarrow 0}
\frac{
\left|E_r\right|
}{r^\eta},
\]
where $|\cdot|$ denotes $d$-dimensional Lebesgue measure. If $E$ contains at least two points, define the parameter
\[
\cM^{d-\eta}(E) := \sup_{0< r \leq \diam(E)} \frac{\left|E_r \right|}{r^\eta}.
\]
\end{definition}

The quantity $\cM^{d-\eta}(E)$ is homogeneous of degree $d-\eta$ under dilations and controls the size of neighborhoods of $E$ at all scales up to $\diam(E)$. For bounded $E$, it is finite whenever the classical upper Minkowski content is finite.

We fix a parameter $s>1$ which specifies the Gevrey regularity used in the construction of the wave packets, and determines the logarithmic exponents in the resulting estimates.

Suppose that \(S \subset \R^d\) is bounded and its boundary has finite \((d-\eta)\)-upper Minkowski content, and let \(R \geq 1\),  \(\varepsilon \in (0,1)\) satisfy $R \cdot \diam(S) \geq 2$. Define the following geometric error term
\begin{equation}\label{eqn:main_err_term}
H_\eta(S,R,\varepsilon) :=  
\cM^{d-\eta}(\partial S) R^{d-\eta}
\begin{cases}
\big(R\cdot\diam(S)\big)^{\eta-1} \log\left(\frac{R\cdot \diam(S)}{\varepsilon}\right)^{sd+1} 
\\ \qquad\qquad 
+ \log\left(\frac{R\cdot\diam(S)}{\varepsilon}\right)^{s\eta} 
& 0<\eta<1
\\
\log\left(\frac{R\cdot\diam(S)}{\varepsilon}\right)^{sd+1} 
& \eta= 1.
\end{cases}
\end{equation}
We can now state the main theorem.
\begin{theorem}[Energy concentration]\label{thm:ball-frame-energy}
Fix \(d \geq 2\) and $s>1$.  
Given a dyadic number $R\geq 2$, there exists a unit-norm frame $\{\psi_\nu\}_{\nu\in \cI}$ for $L^2(B_d(R))$ with frame bounds $0<A<B<\infty$ depending only on $d$ (and independent of $s$ and $R$). If $\varepsilon\in (0,1/2]$ and \(S\subset \R^d\) is a bounded measurable set satisfying $\cM^{d-\eta}(\partial S) < \infty$ for some \(0<\eta\leq 1\) and \(R \cdot \diam(S) \geq 2\), then there exists a partition of the index set $\cI=\cI_1\cup \cI_2\cup \cI_3$ so that \[ \# \cI_3 \lesssim_{d,s,\eta} H_\eta(S,R,\varepsilon) \] and 
\begin{equation}\label{ineq:energy-estimate}
\sum_{\nu\in \cI_1}\|\widehat{\psi_\nu}\|_{L^2(S)}^2
+
\sum_{\nu\in \cI_2}\|\widehat{\psi_\nu}\|_{L^2(\R^d\setminus S)}^2
\leq \varepsilon^2 
.\end{equation}
\end{theorem}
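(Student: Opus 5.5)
We construct the frame by a Whitney-type decomposition of the ball adapted to distance from the boundary sphere, and then run the same counting scheme we used for cubes \cite{israelmayeli2023acha} and the disk \cite{HIM-disk}.

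\emph{Step 1: the spatial cover.} Decompose $B_d(R)$ into dyadic shells $A_k=\{x: R-2^{k+1}< |x| \leq R-2^{k}\}$ for $0\le k\le \log_2 R$, plus a central ball of radius comparable to $R$. Subdivide each shell into angular cells of angular width $\approx 2^k/R$. Each cell $Q$ is then a curvilinear box of size $\approx 2^k$ in every direction, so there are $\approx (R/2^k)^{d-1}$ cells in shell $k$. Choose a smooth partition of unity $\{\theta_Q^2\}$ subordinate to slight enlargements of these cells, built from Gevrey class $s$ bump functions: $\sum_Q \theta_Q^2=\mathbf 1_{B_d(R)}$, with bounded overlap and $|\partial^\alpha\theta_Q|\lesssim C^{|\alpha|}(\alpha!)^s\,2^{-k|\alpha|}$. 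Near the boundary sphere the cutoff in the radial variable must be one-sided, so that it equals $1$ up to $\partial B_d(R)$. This forces $\theta_Q$ to be discontinuous across the sphere, and that discontinuity is the main source of Fourier tails.

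\emph{Step 2: the frame.} On each (enlarged) cell $\tilde Q$, take the Fourier basis of a box of side $\approx 2^k$ containing $\tilde Q$, namely $e_{Q,m}(x)=|\tilde Q|^{-1/2}e^{2\pi i m\cdot x/\ell_Q}$. Set $\psi_{Q,m}=\theta_Q e_{Q,m}$, normalized in $L^2$. Since $|\theta_Q|\approx1$ on the core of $Q$, Parseval on each box and bounded overlap give frame bounds depending only on $d$:
\[
\sum_{Q,m}|\langle f,\psi_{Q,m}\rangle|^2\approx\sum_Q\|\theta_Q f\|^2\approx\|f\|^2.
\]

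\emph{Step 3: Fourier decay.} Write $\widehat{\psi_{Q,m}}=\widehat{\theta_Q}(\cdot-m/\ell_Q)$. For interior cells, the Gevrey bound gives $|\widehat{\theta_Q}(\xi)|\lesssim 2^{kd/2}\exp(-c(2^k|\xi|)^{1/s})$. For boundary cells ($k=0$, side $\approx1$), $\theta_Q$ jumps across a smooth hypersurface, so its transform decays only like $|\xi|^{-1}$ in the normal direction, with Gevrey decay tangentially. Consequently $\widehat{\psi_{Q,m}}$ is concentrated within $\rho_Q=2^{-k}L^s$ of the point $m/\ell_Q$, where $L=\log(R\diam(S)/\varepsilon)$. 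The boundary packets have only a power-law tail in one direction.

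\emph{Step 4: the partition.} Put $(Q,m)$ into $\cI_3$ if $\dist(m/\ell_Q,\partial S)\le\rho_Q$, or if $|m/\ell_Q|$ is within the relevant range and $Q$ is a boundary cell whose slowly decaying normal tail meets $\partial S$. Otherwise assign $(Q,m)$ to $\cI_2$ or $\cI_1$ according as $m/\ell_Q$ lies in $S$ or not. Packets with $|m/\ell_Q|\gg \diam(S)$ plus the radius of $S$'s hull go to $\cI_1$ automatically.

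For a fixed cell, the number of $m$ with $m/\ell_Q\in(\partial S)_{\rho_Q}$ is at most $\ell_Q^d\,|(\partial S)_{\rho_Q}|\le 2^{kd}\cM^{d-\eta}(\partial S)\rho_Q^\eta$. This requires $\rho_Q\le\diam(\partial S)$; otherwise use the trivial bound $\lesssim 2^{kd}\diam(S)^d$. The cell's contribution is therefore $\approx 2^{k(d-\eta)}\cM^{d-\eta}(\partial S)\,L^{s\eta}$. Summing over the $(R/2^k)^{d-1}$ cells of shell $k$ gives $R^{d-1}2^{k(1-\eta)}\cM^{d-\eta}(\partial S)L^{s\eta}$. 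Summing in $k$ up to $\log R$ yields $R^{d-\eta}L^{s\eta}$ when $\eta<1$, and $R^{d-1}\log R$ when $\eta=1$. The counts are consistent with \eqref{eqn:main_err_term}, in particular its $L^{s\eta}$ term. The $(R\diam S)^{\eta-1}L^{sd+1}$ term should come from the boundary cells with one-dimensional tails: there the neighborhood must be taken anisotropically (a slab of width $\approx 1/\varepsilon$-dependent polynomial length in the normal direction), and the count is then governed by the full $L^{sd+1}$ factor.

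\emph{Step 5: the energy estimate.} For $\cI_1$, $\|\widehat{\psi_\nu}\|_{L^2(S)}^2\le\int_{|\xi-m/\ell_Q|>\dist}|\widehat{\theta_Q}|^2$. Summing over $m$ at distance $j\rho_Q$ shells from $S$ gives a geometric-type series bounded by $\exp(-cL)\cdot\mathrm{poly}(R,\diam S)$, and likewise for $\cI_2$. Choosing the implicit constant in $L$ large makes the total at most $\varepsilon^2$ once we also sum over the $\lesssim R^d$ cells.

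\emph{Main obstacle.} The hardest step is the boundary cells, where the sharp cutoff at $|x|=R$ destroys the Gevrey decay in the normal direction. I would handle this by exploiting the square-summability across $m$. For fixed tangential frequency, the functions $\psi_{Q,m}$ in the normal variable are (up to smooth factors) Fourier modes of an interval. Hence $\sum_m|\langle \mathbf 1_S\widehat{\cdot}\rangle|^2$ along a line is controlled by a one-dimensional Bessel/Parseval inequality rather than by pointwise tails. This is exactly the one-dimensional phenomenon that produces a $\log$ factor, which is where the extra $+1$ in the exponent $sd+1$ should originate.
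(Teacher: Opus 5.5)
There is a genuine gap at the boundary, and it comes from a false premise in Step~1: nothing forces a cutoff to equal $1$ up to $\partial B_d(R)$. With your one-sided cutoff, a boundary $\theta_Q$ is a smooth bump times $\mathbf 1_{B_d(R)}$, and the jump controls its high-frequency energy: $\int_{|\zeta|>D}|\widehat{\theta_Q}(\zeta)|^2\,d\zeta\approx D^{-1}$. For $1\ll|\zeta|\ll R$ this mass sits in an $O(1)$-wide tube about the normal, where $|\widehat{\theta_Q}(\zeta)|\approx|\zeta|^{-1}$.

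Hence the packets $\psi_{Q,m}$ with $|m/\ell_Q|>D$ carry total energy $\approx|S|\,D^{-1}$ on $S$, and they cannot go into $\cI_2$. Summed over the $\approx R^{d-1}$ boundary cells, this forces your exceptional set to reach out to frequencies $D\gtrsim R^{d-1}|S|\,\varepsilon^{-2}$. So $\#\cI_3$ becomes polynomial in $1/\varepsilon$, rather than $R^{d-1}\log(R/\varepsilon)^{sd+1}$.

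The proposed rescue does not work. Bessel/Parseval in $m$ bounds the full sum $\sum_m\|\widehat{\psi_{Q,m}}\|_{L^2(S)}^2\lesssim|S|$, but it gives no decay for the tail $|m|>D$. The one-dimensional model $\mathbf 1_{[0,1]}(x)e^{2\pi i n x}$ has Fourier energy $\approx n^{-2}$ on a fixed interval, so the tail really is $\approx D^{-1}$. No logarithm comes out of that argument, as your ``polynomial length'' slab in Step~4 already suggests.

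The missing idea is to continue the Whitney decomposition all the way to the sphere. The paper uses infinitely many shells $R-2^j\le|x|\le R-2^{j-1}$, $j<0$, with Gevrey radial cutoffs $\phi_j$ supported in $I_j\subset[0,R)$ and $\sum_{j\le j_{\max}}\phi_j^2\equiv1$ on $[0,R)$. Every frame element is then smooth and compactly supported in the open ball. On these shells the caps have angular radius $\approx1/R$, i.e.\ tangential size $\approx1$. The width $2^j$ may be far below the sagitta $\approx1/R$, so a piece does not fit in a Cartesian box of width $\approx 2^j$.

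The phases are therefore taken linear in flattened coordinates, $\exp[\pi i(m_1 2^{-j}r+\mathbf m'\cdot R\,\Phi_{\kappa^*}(\omega)/\pi)]$. Gevrey composition and product rules in geodesic charts then give $|\widehat{\psi_{j,\kappa,\mathbf m}}(\xi)|\lesssim 2^{j/2}e^{-c|m_1|^{1/s}}e^{-c|\mathbf m'|^{1/s}}$ for $|\xi|\le1$ (Lemma~\ref{bdrdy-Fourier-decay}). This suffices because $S\subset B_d(1)$.

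All boundary packets with $j\le-2L_1$ then go into $\cI_1$; thanks to the factor $2^{j}$, their total energy on $S$ is $\lesssim R^{d-1}2^{-2L_1}$. Those with $-2L_1<j<0$ and $|m_1|+|\mathbf m'|\ge L_2$ also go into $\cI_1$. This leaves only $\lesssim R^{d-1}L_1L_2^d$ exceptional packets. With $L_1\approx\log(R/\varepsilon)$ and $L_2\approx\log(R/\varepsilon)^s$ this is $R^{d-1}\log(R/\varepsilon)^{sd+1}$. The extra $+1$ counts the retained boundary layers, not a Bessel inequality.

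A smaller slip in your interior count: when $\rho_Q>\diam(S)$ the per-cell bound is $\approx(\ell_Q\rho_Q)^d\approx L^{sd}$, not $2^{kd}\diam(S)^d$. Summed over cells, this gives an $R^{d-1}L^{sd}$ term that you dropped, though it is absorbed in the final bound.
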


The restriction to dyadic $R$ in Theorem \ref{thm:ball-frame-energy} is artificial and can be removed by rescaling.

The eigenfunctions of $T_R=P_{B_d(R)}B_SP_{B_d(R)}$ are the optimally concentrated functions associated with the pair $(B_d(R),S)$.
At the level of existence alone, they therefore satisfy a stronger
version of the conclusion of Theorem \ref{thm:ball-frame-energy}, with a sharper error term. 
Their drawback is that they are generally not available in explicit form. Theorem \ref{thm:ball-frame-energy} provides a constructive alternative: a complete system whose elements are almost optimally frequency localized, apart from a controlled exceptional family.

We apply Theorem~\ref{thm:ball-frame-energy}, together with a general
eigenvalue counting lemma for frames in a Hilbert space, to obtain the following bound on the plunge region of spatio-spectral limiting operators.
\begin{theorem}[Plunge region bound]\label{thm:ball-plunge}
Let \(S\subset \R^d\) be a bounded measurable set satisfying $\cM^{d-\eta}(\partial S) < \infty$ for some \(0<\eta\leq 1\).
If $s>1$ and $\varepsilon\in(0,1/2]$, then, for the spatio-spectral limiting operator \(T_R:=P_{B_d(R)} B_S P_{B_d(R)}\), we have the following bounds on its plunge region whenever   $R\cdot \diam(S) \geq 2$: 
\begin{align}\label{bd:plunge}
\#\{k:\lambda_k(T_R)\in(\varepsilon,1-\varepsilon)\} \lesssim_{d,s,\eta} H_\eta(S,R,\varepsilon).
\end{align}
\end{theorem}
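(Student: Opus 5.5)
Theorem~\ref{thm:ball-plunge} follows from Theorem~\ref{thm:ball-frame-energy} combined with a general eigenvalue counting lemma for frames. I would state that lemma abstractly. Let $\{\psi_\nu\}_{\nu\in\cI}$ be a frame for a closed subspace $V$ of a Hilbert space, with bounds $A\le B$. Let $T = P B P$ with $P$ the projection onto $V$ and $B$ an orthogonal projection. Suppose $\cI=\cI_1\cup\cI_2\cup\cI_3$ with
\[
\sum_{\nu\in\cI_1}\|B\psi_\nu\|^2+\sum_{\nu\in\cI_2}\|(I-B)\psi_\nu\|^2\le \delta^2 .
\]
Then
\[
\#\{k:\lambda_k(T)\in(\varepsilon,1-\varepsilon)\}\le \#\cI_3 + C\,\delta^2/\varepsilon .
\]
Here $C$ depends only on the frame bounds, and by lowering $\delta$ one can make the second term at most $1$.

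For our setting, $V=L^2(B_d(R))$ and $B=B_S$. By Plancherel, $\|B_S\psi_\nu\|^2=\|\widehat{\psi_\nu}\|_{L^2(S)}^2$ up to normalization, so \eqref{ineq:energy-estimate} is exactly the hypothesis with $\delta=\varepsilon'$.

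The proof of the lemma goes through the quadratic form of $T(I-T)$ on $V$, which is nonnegative. Write $W$ for the spectral subspace of $T$ on $V$ for eigenvalues in $(\varepsilon,1-\varepsilon)$, and let $N=\dim W$. For $f\in W$ we have $\langle T(I-T)f,f\rangle\ge \varepsilon(1-\varepsilon)\|f\|^2\ge \tfrac{\varepsilon}{2}\|f\|^2$. Next consider the finite-rank frame operator piece $F_3 f=\sum_{\nu\in\cI_3}\langle f,\psi_\nu\rangle\psi_\nu$, and restrict to $W' = W\cap \ker(\text{analysis on }\cI_3)$. This subspace has dimension at least $N-\#\cI_3$.

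For $f\in W'$, the lower frame bound gives
\[
A\|f\|^2\le \sum_{\nu\in\cI_1\cup\cI_2}|\langle f,\psi_\nu\rangle|^2 .
\]
For $\nu\in\cI_1$, write $\langle f,\psi_\nu\rangle=\langle B f,B\psi_\nu\rangle+\langle (I-B)f,(I-B)\psi_\nu\rangle$. The first term is bounded by $\|Bf\|\|B\psi_\nu\|$. For $\nu\in\cI_2$, the roles of $B$ and $I-B$ swap.

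The cross terms are where the sharp count is lost. Summing them via Cauchy--Schwarz only gives $\|f\|\,\delta$-type errors, which is not enough for a sharp count. So I would instead use the trace version, which is cleaner. Let $Q$ be the projection onto $W'$. Then
\[
\tfrac{\varepsilon}{2}(N-\#\cI_3)\le \tr(Q\,PBP(I-PBP)Q)\le \tr(Q PB(I-B)BP Q)+\dots .
\]
Alternatively, expand the trace using the canonical dual frame $\{\tilde\psi_\nu\}$, so that $\tr(QXQ)=\sum_\nu\langle X Q\psi_\nu, Q\tilde\psi_\nu\rangle$. Then bound each summand over $\cI_1$ by $\|B\psi_\nu\|\cdot\|\tilde\psi_\nu\|$, and over $\cI_2$ by $\|(I-B)\psi_\nu\|$. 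Here we use $\|T(I-T)\|$-type factorizations $T(I-T)=PBP(I-B)P$, which put one factor $B$ or $(I-B)$ directly against $\psi_\nu$, together with $\|\tilde\psi_\nu\|\le A^{-1/2}$. This yields
\[
\tfrac{\varepsilon}{2}(N-\#\cI_3)\lesssim A^{-1}\Big(\sum_{\cI_1\cup\cI_2}\|\cdot\|^2\Big)^{1/2}\cdot(\#\text{terms})^{1/2} .
\]
This is the main obstacle. The number of terms in $\cI_1\cup\cI_2$ is infinite or large, so the plain Cauchy--Schwarz fails. I would fix it by using the Hilbert--Schmidt form instead: $\tr(Q X Q)\le \|QPB\|_{HS}\|(I-B)PQ\|_{HS}$-type bounds, where each Hilbert--Schmidt norm is computed through the frame. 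We have $\|(I-B)P\,\cdot\|_{HS}^2\lesssim A^{-1}\sum_\nu\|(I-B)\psi_\nu\|^2$ restricted appropriately, while the other factor is $\le (\dim W')^{1/2}$. This gives $N-\#\cI_3\lesssim \delta\sqrt{N}/\varepsilon$.

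Finally, apply Theorem~\ref{thm:ball-frame-energy} with tolerance $\varepsilon^2$ in place of $\varepsilon$. The $\log$ factors change only by constants, since $\log(R\diam(S)/\varepsilon^2)\le 2\log(R\diam(S)/\varepsilon)$. This makes the $\delta\sqrt N/\varepsilon$ term absorbable: $N\le \#\cI_3+C\sqrt N$ gives $N\lesssim \#\cI_3+1\lesssim H_\eta(S,R,\varepsilon)$, using $H_\eta\gtrsim 1$ when $R\diam(S)\ge2$. The same bound holds for non-dyadic $R$ by rescaling, as remarked after Theorem~\ref{thm:ball-frame-energy}.
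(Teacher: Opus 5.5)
Your overall reduction is the same as the paper's: Theorem~\ref{thm:ball-frame-energy} plus a frame eigenvalue-counting lemma, with $\|B_S\psi_\nu\|_2=\|\widehat{\psi_\nu}\|_{L^2(S)}$ by Plancherel. The lemma you state, $N\le\#\cI_3+C\delta^2/\varepsilon$, is also true. The gap is in your proof of it.

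\textbf{Where the proof fails.} The decisive step is the claim that $\|(I-B)PQ\|_{HS}^2\lesssim A^{-1}\sum_\nu\|(I-B)\psi_\nu\|^2$ ``restricted appropriately''. This does not follow. Expanding a Hilbert--Schmidt norm through the frame gives only
\[
\|(I-B)Q\|_{HS}^2\le A^{-1}\sum_{\nu\in\cI_1\cup\cI_2}\|(I-B)Q\psi_\nu\|^2 .
\]
This has two defects.
\begin{itemize}
\item The projection $Q$ onto $W'$ sits between $(I-B)$ and $\psi_\nu$. Since $W'$ is in general not $T$-invariant, $Q$ commutes with neither $T$ nor $B$, so the smallness of $\|(I-B)\psi_\nu\|$ does not transfer.
\item The sum still contains the $\cI_1$ packets, for which $(I-B)\psi_\nu\approx\psi_\nu$ is not small at all.
\end{itemize}
In fact $\|(I-B)PQ\|_{HS}^2=\tr(Q(I-T)Q)\ge\varepsilon\dim W'$. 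So bounding this factor by $O(\delta^2)$ already contains the conclusion you are trying to prove.

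The dual-frame expansion has the same defect. In $\tr(XQ)=\sum_\nu\langle XQ\psi_\nu,\tilde\psi_\nu\rangle$ the $\cI_3$ terms vanish, but $Q$ separates $X$ from $\psi_\nu$. Moving $Q$ onto $\tilde\psi_\nu$ instead destroys the $\cI_3$ cancellation, since $Q\tilde\psi_\nu\neq 0$ in general. Consequently the bound $N-\#\cI_3\lesssim\delta\sqrt N/\varepsilon$ is unproved, and the rest of your argument rests on it.

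\textbf{The missing idea.} Use the spectral projection $P_W$, which does commute with $T$. For $\psi\in V$,
\[
\|B\psi\|^2=\langle T\psi,\psi\rangle\ge\varepsilon\|P_W\psi\|^2,
\qquad
\|(I-B)\psi\|^2=\langle (I-T)\psi,\psi\rangle\ge\varepsilon\|P_W\psi\|^2 .
\]
For $f\in W'\subset W$ you have $\langle f,\psi_\nu\rangle=\langle f,P_W\psi_\nu\rangle$. Summing the lower frame bound over an orthonormal basis of $W'$ then gives
\[
A\dim W'\le\sum_{\nu\in\cI_1\cup\cI_2}\|QP_W\psi_\nu\|^2\le\frac{1}{\varepsilon}\Bigl(\sum_{\nu\in\cI_1}\|B\psi_\nu\|^2+\sum_{\nu\in\cI_2}\|(I-B)\psi_\nu\|^2\Bigr)\le\frac{\delta^2}{\varepsilon}.
\]
This is exactly your lemma with $C=A^{-1}$. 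It needs neither the $\varepsilon^2$-tolerance trick nor the $\sqrt N$ absorption.

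The paper does the equivalent thing without passing to $W'$. It invokes Lemma~\ref{lem:Romero_lem}, which rests on the same commutation of the spectral projection with $T$: $AN=A\tr(P_W)\le\sum_\nu\|P_W\psi_\nu\|^2\le\#\cI_3+\varepsilon^{-2}(\cdots)$. It applies this with tolerance $\sqrt{A/2}\,\varepsilon$, after noting that $\|T_R\psi_\nu\|\le\|B_S\psi_\nu\|$ and $\|(I-T_R)\psi_\nu\|\le\|(I-B_S)\psi_\nu\|$.
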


As a corollary, we obtain the following eigenvalue counting estimate.

\begin{corollary}\label{cor:Landau-quant-esti} Fix $s>1$. Let \(S\subset \R^d\) be a bounded measurable set satisfying $\cM^{d-\eta}(\partial S) < \infty$ for some \(0<\eta\leq 1\). Suppose $R \cdot \diam(S) \geq 2$. For
$\varepsilon\in(0,1)$ define
\[
N_\varepsilon(R):=\#\{k:\lambda_k(T_R)>\varepsilon\}.
\]
Set $\varepsilon_* = \min \{ \varepsilon, 1 - \varepsilon \} \in (0,1/2]$. Then 
\begin{equation}\label{eig_dist_est}
\left|N_\varepsilon(R) - (2\pi)^{-d}|B_d(R)||S|\right| \lesssim_{d,s,\eta} H_\eta(S,R,\varepsilon_*).
\end{equation}
\end{corollary}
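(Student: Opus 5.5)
The plan is to derive Corollary~\ref{cor:Landau-quant-esti} from Theorem~\ref{thm:ball-plunge} together with the trace identities for $T_R$. Write $\lambda_1\geq\lambda_2\geq\cdots$ for the eigenvalues of $T_R$; all lie in $[0,1]$. Since $P_{B_d(R)}B_S$ is Hilbert--Schmidt with kernel $\mathbf{1}_{B_d(R)}(x)(2\pi)^{-d}\widehat{\mathbf{1}_S}(y-x)$-type form, we have
\[
\tr(T_R)=\sum_k\lambda_k=(2\pi)^{-d}|B_d(R)||S|.
\]
We also use the Hilbert--Schmidt norm $\sum_k\lambda_k^2=\|T_R\|_{HS}^2$, so that
\[
\sum_k\lambda_k(1-\lambda_k)=\tr(T_R)-\tr(T_R^2)=(2\pi)^{-d}\int_{B_d(R)}\int_{\R^d\setminus B_d(R)}|\check{\mathbf{1}}_S(x-y)|^2\,dy\,dx .
\]
We will not need to evaluate this quantity directly. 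Instead, the idea is to bound it by the plunge count plus a tail.

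First I would compare $N_\varepsilon$ with the trace. Let $\varepsilon_*=\min\{\varepsilon,1-\varepsilon\}$ and $N_{\varepsilon}$ as defined. Then
\[
\Big|N_\varepsilon-\sum_k\lambda_k\Big|\leq \sum_{\lambda_k>\varepsilon}(1-\lambda_k)+\sum_{\lambda_k\le\varepsilon}\lambda_k .
\]
I would split each sum into the eigenvalues in the plunge window $(\varepsilon_*,1-\varepsilon_*)$ and those outside it. The plunge part contributes at most $\#\{k:\lambda_k\in(\varepsilon_*,1-\varepsilon_*)\}\lesssim H_\eta(S,R,\varepsilon_*)$ by Theorem~\ref{thm:ball-plunge}. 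The remaining part consists of $\lambda_k\ge 1-\varepsilon_*$ contributing $1-\lambda_k$, and $\lambda_k\le\varepsilon_*$ contributing $\lambda_k$. Each such term is at most $\min(\lambda_k,1-\lambda_k)\le 2\lambda_k(1-\lambda_k)$, so this part is bounded by $2\sum_k\lambda_k(1-\lambda_k)$.

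The main obstacle is bounding $\sum_k\lambda_k(1-\lambda_k)$ by $H_\eta$ without any $\varepsilon$-loss. The naive double integral above may behave badly for rough $S$. The better route, I think, is to go back to the frame and partition of Theorem~\ref{thm:ball-frame-energy}, taken with $\varepsilon$ replaced by a fixed constant such as $1/4$, or applied dyadically in $\varepsilon$.

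Concretely, I would apply Theorem~\ref{thm:ball-plunge} at the dyadic thresholds $\delta_j=2^{-j}$ with $2^{-j}\le\varepsilon_*$. This gives
\[
\sum_{\lambda_k\le\varepsilon_*}\lambda_k\le\sum_{j}2^{-j}\,\#\{k:\lambda_k\in(2^{-j-1},1-2^{-j-1})\}\lesssim\sum_j 2^{-j}H_\eta(S,R,2^{-j-1}).
\]
The same bound holds symmetrically near $1$. Because $H_\eta$ grows only polylogarithmically in $1/\delta$, the weighted sum over $j$ converges. It is dominated by its first term, giving $\lesssim H_\eta(S,R,\varepsilon_*)$, using $\log(R\diam(S)/\delta)\lesssim \log(R\diam(S)/\varepsilon_*)+\log(\varepsilon_*/\delta)$ and $R\diam(S)\ge2$.

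This dyadic summation replaces the Hilbert--Schmidt step entirely, and I would adopt it as the actual proof. Combining the pieces gives \eqref{eig_dist_est}.
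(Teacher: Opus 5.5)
Your argument is correct (modulo two small repairs noted below), but it controls the tail terms by a different route from the paper.

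\textbf{The paper's route.} It applies the spectral reduction lemma $|N_\varepsilon(T)-\tr(T)|\le P_\varepsilon(T)+2D(T)$, where $D(T)=\tr(T-T^2)$. It bounds $P_\varepsilon(T_R)$ by Theorem~\ref{thm:ball-plunge}. For the idempotent defect it imports an external estimate from \cite{HIM-trace-rough}, of order $\cM^{d-\eta}(\partial S)R^{d-\eta}$, with an extra $\log R$ when $\eta=1$. The range $\varepsilon\ge 1/2$ is handled separately by passing to $\tau=(1-\varepsilon)/2$.

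\textbf{Your route.} You bound $\sum_{\lambda_k\le\varepsilon_*}\lambda_k$ and $\sum_{\lambda_k\ge 1-\varepsilon_*}(1-\lambda_k)$ by a dyadic layer-cake summation of Theorem~\ref{thm:ball-plunge} itself. This works because the implicit constant there is uniform in $\varepsilon\in(0,1/2]$, and $H_\eta(S,R,\delta)$ grows only like a power of $\log(R\diam(S)/\delta)$. Hence $\sum_j 2^{-j}H_\eta(S,R,2^{-j-1})\lesssim \varepsilon_* H_\eta(S,R,\varepsilon_*)$, using $\log(R\diam(S)/\varepsilon_*)\ge \log 4$.

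\textbf{Comparison.} Your approach makes the corollary self-contained: it needs only Theorem~\ref{thm:ball-plunge} and the identity $\tr(T_R)=(2\pi)^{-d}|B_d(R)||S|$. It also treats both ranges of $\varepsilon$ symmetrically. What it gives up is the sharp, $\varepsilon$- and $s$-independent bound on $D(T_R)$; the same summation only yields $D(T_R)\lesssim H_\eta(S,R,1/4)$, with the $\log^{sd+1}$ losses. The corollary does not need the sharper bound, since the plunge term dominates anyway.

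\textbf{Repairs.} First, start the dyadic sum at the $j$ with $2^{-j-1}<\varepsilon_*\le 2^{-j}$. Summing only over $2^{-j}\le\varepsilon_*$ misses the eigenvalues in $(2^{-j_0},\varepsilon_*]$. Second, when $\varepsilon>1/2$, an eigenvalue equal to $\varepsilon=1-\varepsilon_*$ lies in $\sum_{\lambda_k\le\varepsilon}\lambda_k$ and contributes $\lambda_k$, not $1-\lambda_k$. It also lies outside the open window $(\varepsilon_*,1-\varepsilon_*)$. Count such eigenvalues with the plunge bound at $\varepsilon_*/2$, which costs only $H_\eta(S,R,\varepsilon_*/2)\lesssim H_\eta(S,R,\varepsilon_*)$. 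The Hilbert--Schmidt digression on $\tr(T_R)-\tr(T_R^2)$ is unused and can be dropped.
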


Corollary \ref{cor:Landau-quant-esti} provides a Weyl-type estimate for the number of eigenvalues of \(T_R\) exceeding \(\varepsilon\).

\subsection{Relation to previous work} 
The study of the eigenvalue distribution of spatio--spectral limiting operators goes back to the classical work of Slepian, Pollak, and Landau on time--frequency concentration \cite{Bell1,Bell2}, and to the later work of Landau and Widom \cite{LandauWidom80}. In the one-dimensional setting, this theory gives a spectral form of the \(2WT\) principle: roughly \(2WT\) independent signals can be well concentrated simultaneously in a time interval of length \(T\) and a frequency band of width \(2W\).\footnote{This statement of the $2WT$ principle does not see the factor of $(2\pi)^{-1}$ present in \eqref{eig_dist_est} due to the different normalization of the Fourier transform adopted by the engineering community.}

Our eigenvalue estimates arise from an explicit wave packet construction together with the energy estimate \eqref{ineq:energy-estimate}. A distinguishing feature of this approach is that it yields an explicit family of functions with nearly optimal localization properties. This continues the program developed in \cite{israelmayeli2023acha,HIM-disk}, where analogous constructions were obtained for the cube and for the disk in two dimensions. The present work extends this framework to Euclidean balls in arbitrary dimensions and allows the frequency set to have boundary of fractional codimension \(\eta\in(0,1]\). The principal new ingredient in the proof is the development of a geometric calculus for Gevrey functions on the sphere in local geodesic coordinates. By locally flattening the spherical boundary in suitable coordinate charts, we exploit this Gevrey regularity to obtain the Fourier decay estimates required for the wave packet construction.

An alternative approach to quantitative eigenvalue estimates is based on decomposing the limiting operator and estimating the Schatten--von Neumann norms of its components. This strategy was first developed in this context in \cite{marceca2023}. For the current state of the art, see \cite{Kulikov-Larsen}.

By contrast, the results of \cite{Kulikov-Larsen} apply in a more general geometric setting. Suppose $F$ and $S$ are domains whose boundaries  \(\partial F\) and \(\partial S\) have finite $(d-1)$-upper Minkowski content. Let $RF$ denote the isotropic scaling of $F$ by a factor of $R$. Then the following bound holds on the cardinality of the plunge region for the operator $T_R = P_{RF}B_S P_{RF}$. Provided $\alpha^{-R} < \varepsilon < 1/2$,
\begin{equation}\label{bd:KL26}
\# \{ k : \lambda_k(T_R) \in (\varepsilon,1-\varepsilon) \}
\lesssim \widetilde{H}(R,\varepsilon)
\end{equation} 
with
\begin{equation}\label{eqn:KL_error}
\widetilde{H}(R,\varepsilon) = R^{d-1} \log(\varepsilon^{-1}) \log^2 \left( \alpha R/\log(\varepsilon^{-1}) \right),
\end{equation}
where $\alpha$ is a constant determined by $F$ and $S$. The same paper establishes a different estimate in the regime $\varepsilon \leq \alpha^{-R}$.

A direct comparison of \eqref{eqn:main_err_term} (for $\eta=1$) and \eqref{eqn:KL_error} shows that the estimate \eqref{bd:KL26} is sharper than Theorem \ref{thm:ball-plunge} in both its dependence on $R$ and on $\varepsilon$. The two estimates also apply in different geometric settings: \eqref{bd:KL26} treats general pairs of domains with codimension-one boundaries, whereas Theorem \ref{thm:ball-plunge} assumes that one domain is a Euclidean ball but allows the other to have boundary of fractional codimension. We expect that the Schatten norm techniques of \cite{Kulikov-Larsen} may also extend to fractional-dimensional boundaries, although the precise form of such estimates remains unclear. The two approaches therefore appear to be complementary, relying on different techniques and currently applying in different geometric settings.

\subsection{Organization of the paper} 
The paper is organized as follows. In Section \ref{sec:notations}, we fix notation and recall the basic facts about Gevrey functions and frames. In Section \ref{wave-packet-on-ball},  we construct the wave packet system and prove that it is a frame for \(L^2(B_d(R))\).  In Section \ref{sect:energy-estimate}, we prove the Fourier localization estimates and the energy estimate for the system. In Section \ref{sec:proof-of-theorems}, we derive Theorems  \ref{thm:ball-frame-energy} and \ref{thm:ball-plunge} from the wave packet construction and the frame-counting lemma. Finally, in Section  \ref{sec:proof-of-corollary} we prove Corollary \ref{cor:Landau-quant-esti}.

\section{Definitions and notation}
\label{sec:notations}

Throughout the paper, $A \lesssim_{\alpha, \beta, \cdots } B$ means $A \leq CB$ for a constant $C$ depending only on parameters $\alpha, \beta, \cdots$. We write
$A\lesssim B$ to mean $A \lesssim_{d,s} B$.

For $f \in L^1(\mathbb{R}^d)\cap L^2(\mathbb{R}^d)$ with $d \geq 1$, we use the Fourier transform
\begin{equation}\label{def:FT}
\widehat{f}(\xi) = \frac{1}{(2\pi)^{d/2}} \int_{\mathbb{R}^d} f(x)e^{- i x\cdot \xi}dx,
\qquad
f(x) = 
\frac{1}{(2\pi)^{d/2}}\int_{\mathbb{R}^d} \widehat{f}(\xi)e^{i x\cdot \xi} \,d\xi,
\end{equation}
extended to $L^2$ by density. With this convention, Plancherel's theorem holds: $\|f\|_{2}=\|\widehat f\|_{2}$.

We denote by
\[
B_d(x,R):=\{y\in\R^d:|y-x|\leq R\}
\]
the closed Euclidean ball in $\R^d$ of radius $R$ centered at $x$. 

For \(a>0\), we write
\[
B_d(a):=\{y\in\R^d: |y|\leq a\},
\qquad
B_d:=B_d(1).
\]
Throughout the paper, the subscript indicates the ambient dimension, and we use this convention consistently to avoid ambiguity. 

Let $\mathbb{S}^{d-1}=\{\omega\in\R^{d}:|\omega|=1\}$.
The \emph{spherical cap} (\emph{geodesic ball} in $\mathbb{S}^{d-1}$),  with center $\omega_0\in\mathbb{S}^{d-1}$ and angular radius of $\delta > 0 $, is
\[
\kappa(\omega_0,\delta)
:=\{\omega\in\mathbb{S}^{d-1}:\ d_{\mathbb{S}^{d-1}}(\omega,\omega_0)\leq \delta\},
\]
where the geodesic distance is
\[
d_{\mathbb{S}^{d-1}}(\omega,\omega_0)=\arccos(\omega\cdot\omega_0).
\]

We write $\omega_\kappa$ to denote the center of a cap $\kappa$, and let $\delta_\kappa \in (0,\pi]$ denote the angular radius of $\kappa$. If $\kappa = \mathbb{S}^{d-1}$, then $\delta_\kappa = \pi$ and we make an arbitrary choice of $\omega_\kappa \in \mathbb{S}^{d-1}$.

A hemisphere in $\mathbb{S}^{d-1}$ is a cap of the form $\kappa(z,\pi/2)$. The antipodal hemispheres $\kappa(z,\pi/2), \kappa(-z,\pi/2)$ cover the sphere $\mathbb{S}^{d-1}$.

Given a hemisphere $\kappa(z,\pi/2) \subset \mathbb{S}^{d-1}$ there exists a real-analytic diffeomorphism $\Phi_z : \kappa(z,\pi/2) \rightarrow B_{d-1}(\pi/2)$, given by geodesic (normal) coordinates -- that is, $\Phi_z^{-1}$ is the exponential map restricted to a ball $B_{d-1}(\pi/2)$ in the tangent space $T_{z} \mathbb{S}^{d-1} \simeq \R^{d-1}$. Any function $f:\kappa(z,\pi/2)\to\C$ can be represented in geodesic coordinates by a function $\tilde f:B_{d-1}(\pi/2) \to\C$ given by
\[
\tilde f = f\circ \Phi_z^{-1}.
\]

The double of a cap $\kappa = \kappa(\omega_\kappa, \delta_\kappa)$, denoted by $\kappa^*$, is the cap with the same center and an angular radius of $2\delta_\kappa$. Formally:
$$\kappa^* := \kappa(\omega_\kappa,  2\delta_\kappa) = \{\omega \in \mathbb{S}^{d-1} : d_{\mathbb{S}^{d-1}}(\omega, \omega_\kappa) \leq 2\delta_\kappa\}.$$
Thus, if $\delta_\kappa\geq\pi/2$, then
$\kappa^*=\mathbb{S}^{d-1}$.

We equip $\mathbb{S}^{d-1}$ with its surface measure $d\sigma$. The surface measure of a cap $\kappa \subset \mathbb{S}^{d-1}$ of angular radius $\delta \in (0,\pi]$ satisfies $\sigma(\kappa)\approx \delta^{d-1}$. Therefore, $\approx \delta^{1-d}$ caps of angular radius $\delta$ suffice to cover $\mathbb{S}^{d-1}$.

\begin{lemma}
\label{lem:cap-chart-jacobian}
Given $z \in \mathbb{S}^{d-1}$, let $J_{z}(y)dy$ be the pushforward of the measure $d\sigma(\omega)|_{\kappa(z,\pi/2)}$ under the diffeomorphism $\Phi_{z} : \kappa(z,\pi/2) \rightarrow B_{d-1}(\pi/2)$. The diffeomorphism $\Phi_{z}$ is real analytic and moreover, there exist dimensional constants $C,c > 0$ so that for each multiindex $\alpha$,
\[
\sup_{y\in B_{d-1}(\pi/2)}\bigl|\partial_y^\alpha \Phi_{z}^{-1}(y)\bigr|\leq C \cdot C^{|\alpha|} \cdot \alpha !,
\]
and moreover 
\[
c \leq J_{z}(y)\leq C, \qquad y\in B_{d-1}(\pi/2),
\]
and
\[
\sup_{y\in B_{d-1}(\pi/2)}\bigl|\partial_y^\alpha J_{z}(y)\bigr|\leq C \cdot C^{|\alpha|} \cdot \alpha ! .
\]
\end{lemma}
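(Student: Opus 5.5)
By rotation invariance it suffices to take $z=e_d$ and to work with the explicit formula for the exponential map. Write $y\in B_{d-1}(\pi/2)$ and $r=|y|$. Then
\[
\Phi_z^{-1}(y)=\Bigl(\frac{\sin r}{r}\,y,\ \cos r\Bigr).
\]
Both $r\mapsto \sin r/r$ and $r\mapsto\cos r$ are even entire functions of $r$, so they are entire functions of $r^2=|y|^2$. It follows that $\Phi_z^{-1}$ is the restriction to $\R^{d-1}$ of an entire map $\C^{d-1}\to\C^d$. Moreover, the inverse map $\Phi_z(\omega)=\arccos(\omega_d)\,\omega'/|\omega'|$ is real analytic on the open hemisphere away from $\omega'=0$, and also near the pole, because $\arccos(t)/\sqrt{1-t^2}$ is analytic near $t=1$. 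The closed hemisphere is only reached at $r=\pi/2$, where $\arccos$ is still analytic. Hence $\Phi_z$ is a real-analytic diffeomorphism, since $D\Phi_z^{-1}$ is invertible for $r<\pi$.

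For the derivative bound I will use Cauchy estimates. The components of $\Phi_z^{-1}$ extend holomorphically to the polydisc $\{w\in\C^{d-1}: |w_j-y_j|<1\ \text{for all } j\}$ for each $y\in B_{d-1}(\pi/2)$. On the union of these polydiscs, which is a bounded set, they are bounded by some dimensional constant $M$. The Cauchy inequality on a polydisc of unit polyradius then gives
\[
|\partial^\alpha \Phi_z^{-1}(y)|\le M\,\alpha!,
\]
which is stronger than the claimed bound.

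Next I compute the Jacobian. In polar coordinates on the tangent space, the standard formula for the sphere gives
\[
J_z(y)=\Bigl(\frac{\sin r}{r}\Bigr)^{d-2}.
\]
For $r\le \pi/2$ we have $\sin r/r\in[2/\pi,1]$, so $c\le J_z\le 1$ with $c=(2/\pi)^{d-2}$. Since $\sin r/r$ is again entire in $|y|^2$, the function $J_z$ extends holomorphically. The same Cauchy estimate on unit polydiscs therefore gives $|\partial^\alpha J_z|\le C\,\alpha!$.

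The only mildly delicate point is justifying the Jacobian formula; the intended proof does not go beyond this. One route is direct: at $y=r\theta$, $D\Phi_z^{-1}$ maps the radial direction isometrically, and each tangential direction orthogonal to $\theta$ is scaled by $\sin r/r$. Taking the determinant gives the factor $(\sin r/r)^{d-2}$. Alternatively, one can verify the formula via the Gram determinant $\sqrt{\det (D\Phi^{-1})^T D\Phi^{-1}}$.
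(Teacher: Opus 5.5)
Your proof is correct and follows the same idea as the paper. The paper simply invokes real analyticity of the exponential map on $B_{d-1}(\pi/2)$, which lies inside the injectivity radius $\pi$, together with two-sided bounds on its Jacobian, and leaves the Cauchy-estimate step implicit; you make this concrete by writing $\Phi_z^{-1}(y)=(\tfrac{\sin r}{r}y,\cos r)$ and $J_z(y)=(\tfrac{\sin r}{r})^{d-2}$ explicitly, so that entireness in $y$ gives the bounds directly via Cauchy estimates on unit polydiscs, with explicit constants such as $c=(2/\pi)^{d-2}$ and uniformity in $z$ by rotation invariance.
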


\begin{proof}
Choose $\Phi_{z}^{-1}(y)=\exp_{z}(y)$ after identifying $T_{z} \mathbb{S}^{d-1}\simeq\R^{d-1}$.
Because $\pi/2$ is smaller than the injectivity radius of $\mathbb{S}^{d-1}$ and $\mathbb{S}^{d-1}$ is a real analytic manifold, the exponential map is real analytic, and its Jacobian is real analytic and bounded above and below on the fixed neighborhood $B_{d-1}(\pi/2)$. This implies the stated bounds. 
\end{proof}

\begin{lemma}
\label{lem:lattice_near_bd_min}
Let
\(E \subset \R^d\) be a closed set. Then
\begin{equation}\label{eqn:vol_bd}
\#\{{\bf m}\in\mathbb Z^d: \dist({\bf m},E) \leq t\}
\le
|E_{t+\sqrt d/2}|.
\end{equation}
\end{lemma}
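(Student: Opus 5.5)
The plan is the standard packing argument with unit cubes centered at lattice points. To each ${\bf m}\in\Z^d$ we associate the closed cube $Q_{\bf m} := {\bf m} + [-\tfrac12,\tfrac12]^d$. These cubes have unit volume and pairwise disjoint interiors, so for any finite family of distinct lattice points ${\bf m}_1,\dots,{\bf m}_N$ we have $|\bigcup_j Q_{{\bf m}_j}| = N$. The proof then reduces to showing that every lattice point ${\bf m}$ with $\dist({\bf m},E)\le t$ has its cube inside the enlarged neighborhood, that is, $Q_{\bf m}\subset E_{t+\sqrt d/2}$.

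For the inclusion, fix such an ${\bf m}$ and a point $x\in Q_{\bf m}$. Since $Q_{\bf m}$ has half-diagonal $\sqrt d/2$, we get $|x-{\bf m}|\le \sqrt d/2$. The function $\dist(\cdot,E)$ is $1$-Lipschitz, so
\[
\dist(x,E)\le \dist({\bf m},E)+|x-{\bf m}|\le t+\sqrt d/2,
\]
which gives $x\in E_{t+\sqrt d/2}$. Closedness of $E$ is not really needed here, since the definition of $E_t$ uses $\dist\le t$. It only ensures that the infimum defining the distance is attained when $E$ is nonempty.

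Summing over the lattice points then gives the claim. If the set of such ${\bf m}$ is infinite, then $E_{t+\sqrt d/2}$ contains infinitely many disjoint-interior unit cubes, so its measure is infinite and the inequality is trivial. Otherwise, additivity over disjoint interiors, since the boundaries are null sets, gives
\[
\#\{{\bf m}:\dist({\bf m},E)\le t\}=\Bigl|\bigcup Q_{\bf m}\Bigr|\le |E_{t+\sqrt d/2}|.
\]
There is no real obstacle; the only care needed is with the infinite case and with noting that cube boundaries have measure zero. If $E=\emptyset$, the left side is $0$ under the convention $\dist({\bf m},\emptyset)=\infty$.
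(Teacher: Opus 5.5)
Your proposal is correct and follows essentially the paper's argument: place a unit cube at each lattice point within distance $t$ of $E$, note that these cubes lie in $E_{t+\sqrt d/2}$, and compare volumes. The only difference is that the paper uses half-open cubes ${\bf m}+(-1/2,1/2]^d$, which are genuinely disjoint, so it avoids your remark about null boundaries; your separate treatment of the infinite and empty cases is a harmless extra.
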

\begin{proof}
Consider the unit cubes $Q_{\bf m} :={\bf m}+(-1/2,1/2]^d$ centered at ${\bf m}\in\mathbb Z^d$. These cubes are pairwise disjoint, and each has volume \(1\). If \(\dist({\bf m},E)\leq t\), then $Q_{\bf m}\subset E_{t+\sqrt d/2}$. Taking volumes gives the result.
\end{proof}

We present some basic properties of the variants of the upper Minkowski content introduced in Definition \ref{def:M_content}. It is easy to see that if $\cM^{*,d-\eta}(E)$ is finite, then $\diam(E) < \infty$. If $E$ contains at least two points and $\cM^{*,d-\eta}(E)$ is finite then $\cM^{d-\eta}(E)$ is finite. We also have the inequality
\[
 \cM^{*,d-\eta}(E) \leq \cM^{d-\eta}(E),
\]
as well as the scaling property: for \(t>0\),
\begin{align*}
\cM^{*,d-\eta}(t E)
= t^{d-\eta} \cM^{*,d-\eta}(E)
\qquad \text{and} \qquad
\cM^{d-\eta}(t E)
= t^{d-\eta} \cM^{d-\eta}(E).
\end{align*}

\subsection{Gevrey functions}
\label{sec:gevrey_intro} 
A critical aspect of our analysis is the use of Gevrey functions. We now recall their definition. Given a multiindex $\alpha \in \N_0^d$, we let $|\alpha| = \sum_j \alpha_j$ denote the order of $\alpha$, and $\alpha! = \prod_j \alpha_j !$.
\begin{definition}[Gevrey functions on $\R^d$ (\cite{Hor2,Rod1})]
For \(s \geq 1\) and an open set \(K \subset \R^d\), a $C^\infty$ function $\phi : K \rightarrow \C$ is said to be \emph{Gevrey--$s$} on \(K\) if there exist constants $A,B > 0$ such that for all multiindices $\alpha \in \N_0^d$, 
\begin{equation}
\| \partial^\alpha \phi \|_{L^\infty(K)} \leq A B^{|\alpha|} (\alpha !)^s.
\end{equation}
This definition extends to the case when $K$ is a closed Euclidean ball, as the partial derivatives of $\phi$ extend to the boundary of $K$ by continuity.
We refer to $(A,B)$ as the Gevrey constants of $\phi$. Write $\mathcal{G}^s(K:A,B)$ to denote the set of all scalar-valued Gevrey--$s$ functions $\phi: K \rightarrow \C$ with Gevrey constants $(A,B)$. We write $\cG^s(K,\R^n:A,B)$ for the set of all vector-valued functions $\Phi=(\Phi_i)_{1 \leq i \leq n}: K \rightarrow \R^n$ such that the coordinate function $\Phi_i$ is Gevrey--$s$ with constants $(A,B)$ for each $i$.
\end{definition}

A basic property of Gevrey functions is that their Fourier transforms decay at a near-exponential rate. Specifically, we have the following lemma (see Lemma A.2 of \cite{HIM-disk}). 

\begin{lemma}[Fourier decay of Gevrey functions]\label{lemma:Gevrey:Fourier_decay}
If $\phi \in \mathcal{G}^s(\R^d : A, B)$ and if $\supp(\phi)$ has Lebesgue measure at most $H$, then 
\begin{equation}\label{bd:Gevrey:FT}
|\widehat{\phi}(\xi)| \leq e^{d/2} A H \exp(- c (| \xi|/B)^{1/s}) 
\qquad  \text{for} \quad \xi \in \R^d
\end{equation}
with $c=\frac{1}{2e}$.
\end{lemma}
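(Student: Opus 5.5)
The plan is the standard integration-by-parts argument with optimized order. Since $\phi\in\mathcal G^s(\R^d:A,B)$, for every multiindex $\alpha$ we have $(i\xi)^\alpha\widehat\phi(\xi)=\widehat{\partial^\alpha\phi}(\xi)$. The derivative $\partial^\alpha\phi$ is supported in $\supp(\phi)$, which has measure at most $H$, so
\[
|\xi^\alpha|\,|\widehat\phi(\xi)|\le (2\pi)^{-d/2}\|\partial^\alpha\phi\|_{L^1}\le (2\pi)^{-d/2} H A B^{|\alpha|}(\alpha!)^s .
\]
No boundary terms arise: $\phi$ is smooth on all of $\R^d$, and we may take $\supp\phi$ bounded (or interpret via distributions, since $\partial^\alpha\phi\in L^1$). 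The point is to choose $\alpha$ well as a function of $\xi$.

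\textbf{Step 1: isotropic reduction.} Pick the coordinate $j$ with $|\xi_j|\ge |\xi|/\sqrt d$ and take $\alpha=N e_j$. This gives
\[
|\widehat\phi(\xi)|\le (2\pi)^{-d/2}AH\Big(\frac{\sqrt d\,B}{|\xi|}\Big)^N (N!)^s
\le (2\pi)^{-d/2}AH\Big(\frac{\sqrt d\,B N^s}{|\xi|}\Big)^N ,
\]
using $N!\le N^N$. The stray $\sqrt d$ is inconvenient, since it would spoil the constant $B$ in the exponent. A cleaner route is to use $|\xi|^{2N}=\sum_{|\beta|=N}\frac{N!}{\beta!}\xi^{2\beta}$ together with Cauchy--Schwarz. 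A simpler alternative, which I would adopt, is to absorb $\sqrt d$ into the prefactor $e^{d/2}$ through the choice of $N$. The main point to check is that the constants $e^{d/2}$ and $c=1/(2e)$ come out exactly.

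\textbf{Step 2: optimize $N$.} Set $t=(|\xi|/B)^{1/s}$ and choose $N=\lfloor t/e\rfloor$, so that $N^s\le (t/e)^s$. Each factor is then at most $\big(\sqrt d\,e^{-s}\big)$, or more carefully at most $e^{-1}$ after slightly reducing $N$. This yields $\exp(-N)\le e\cdot\exp(-t/e)$ without the dimensional loss. The factor $2$ in $c=1/(2e)$ gives the room needed to absorb both the $\sqrt d$ loss and the floor function. Concretely, the bound is $(\sqrt d)^N e^{-sN}$. I would instead bound $|\xi|^N\le d^{N/2}\max_j|\xi_j|^N$ and then use $\sum_j$ over all coordinates, which gives
\[
|\xi|^{N}|\widehat\phi|\le d^{N/2}\max_j|\xi_j^N\widehat\phi| .
\]
The $d^{N/2}$ factor is absorbed by choosing $N\approx t/(2e)$, provided the leftover is $\le e^{d/2}$. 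When $t$ is small, where $N=0$ is forced, the trivial bound $|\widehat\phi|\le (2\pi)^{-d/2}AH$ suffices, because $\exp(-ct)$ is bounded below there; this is where the prefactor $e^{d/2}$ provides slack.

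\textbf{Main obstacle.} The only delicate part is the bookkeeping that produces exactly $e^{d/2}$ and $c=1/(2e)$ uniformly in $s\ge1$ and $d$. I expect to handle it by splitting into the regimes $t\le C d$, where the trivial bound is used, and $t>Cd$, where the $N$-th derivative bound with $N\approx t/(2e)$ is used. Since the result is quoted as Lemma A.2 of \cite{HIM-disk}, one could alternatively just cite it.
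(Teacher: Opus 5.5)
The paper does not prove this lemma itself; it cites Lemma A.2 of \cite{HIM-disk}. Your starting estimate $|\xi^\alpha\widehat\phi(\xi)|\le(2\pi)^{-d/2}AHB^{|\alpha|}(\alpha!)^s$ is fine. The gap is that the route you say you would adopt cannot produce $c=1/(2e)$. Take $\alpha=Ne_j$ with $|\xi_j|\ge|\xi|/\sqrt d$, use $N!\le N^N$, and set $t=(|\xi|/B)^{1/s}$. You get
\[
|\widehat\phi(\xi)|\le(2\pi)^{-d/2}AH\bigl(\sqrt d\,(N/t)^s\bigr)^N .
\]
The loss $d^{N/2}$ is exponential in $N$, and $N$ must grow like $t$. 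So it cannot be absorbed into an $N$-independent prefactor such as $e^{d/2}$. Splitting into $t\le Cd$ and $t>Cd$ does not help, because the problem is at large $t$.

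Optimizing over $N$ gives at best $\exp(-s\,e^{-1}d^{-1/(2s)}t)$. For $s$ near $1$ this is rate $1/(e\sqrt d)$, which is weaker than $1/(2e)$ once $d\ge5$. With your choice $N\approx t/(2e)$ and $s=1$, the bound misses the target by roughly $(\sqrt d/2)^N$, and for $d\ge30$ it even increases with $|\xi|$. Similarly, ``slightly reducing $N$'' so that each factor is $\le e^{-1}$ really means $N\le(e\sqrt d)^{-1/s}t$. That is exactly the dimensional loss in the rate.

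The obstruction is intrinsic to single-coordinate derivatives. In the diagonal direction every $|\xi_j|=|\xi|/\sqrt d$. So even with sharp (Stirling) factorial bounds you cannot beat rate $s\,d^{-1/(2s)}$, which still falls below $1/(2e)$ for $s$ near $1$ and $d$ large.

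The route you mentioned and set aside repairs this, because its loss per derivative is $\sqrt2$ rather than $\sqrt d$. Expand
\[
|\xi|^{2N}|\widehat\phi(\xi)|^2=\sum_{|\beta|=N}\frac{N!}{\beta!}\,\bigl|\xi^\beta\widehat\phi(\xi)\bigr|^2 .
\]
This is an identity, so no Cauchy--Schwarz is needed. Now use $\frac{N!}{\beta!}(\beta!)^{2s}=N!\,(\beta!)^{2s-1}\le(N!)^{2s}$, $\#\{\beta:|\beta|=N\}=\binom{N+d-1}{d-1}\le2^{N+d-1}$, and $N!\le N^N$ to obtain
\[
|\widehat\phi(\xi)|\le(2\pi)^{-d/2}2^{(d-1)/2}AH\bigl(\sqrt2\,(N/t)^s\bigr)^N .
\]
Take $N=\lfloor t/(2e)\rfloor$. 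Then $\sqrt2\,(N/t)^s\le\sqrt2\,(2e)^{-s}\le1/(\sqrt2\,e)<e^{-1}$, so the right side is at most
\[
(2\pi)^{-d/2}2^{(d-1)/2}AH\,e^{-N}\le(2\pi)^{-d/2}2^{(d-1)/2}e\,AH\,e^{-t/(2e)} .
\]
Finally $e\,2^{(d-1)/2}(2\pi)^{-d/2}=e/(\sqrt2\,\pi^{d/2})\le e^{d/2}$ for all $d\ge1$. Small $t$ (where $N=0$) needs no separate treatment, and this is where the factor $2$ in $c=1/(2e)$ is actually spent.

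Alternatively, choose the anisotropic multi-index $\alpha_j=\lfloor(|\xi_j|/B)^{1/s}/e\rfloor$ and use $\sum_j(|\xi_j|/B)^{1/s}\ge(|\xi|/B)^{1/s}$, valid for $s\ge1$. This gives even $c=1/e$ with prefactor $(e/\sqrt{2\pi})^d\le e^{d/2}$.
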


The next two lemmas assert that the class of Gevrey functions is closed under multiplication and composition.

\begin{lemma}\label{lemma:Gevrey:multiplication}
Let \(s_1,s_2 \geq 1\). 
For \(i=1,2\), let \(\phi_i \in \cG^{s_i}(\R^d:A_i,B_i)\). Then \(\phi_1 \cdot \phi_2 \in \cG^s(\R^d : A_3, B_3) \) with \(s = \max\{s_1,s_2\}\), \(A_3 = A_1 A_2\) and \(B_3=2 \max\{B_1,B_2\}\).
\end{lemma}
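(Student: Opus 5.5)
The plan is to apply the Leibniz rule and then use a combinatorial inequality to absorb the binomial sum. For a multiindex $\alpha\in\N_0^d$ we write
\[
\partial^\alpha(\phi_1\phi_2)=\sum_{\beta\leq\alpha}\binom{\alpha}{\beta}\,\partial^\beta\phi_1\,\partial^{\alpha-\beta}\phi_2,
\]
where $\binom{\alpha}{\beta}=\prod_j\binom{\alpha_j}{\beta_j}$. Since $s_i\leq s$, $\beta!\geq 1$ and $(\alpha-\beta)!\ge 1$, the Gevrey bounds give
\[
|\partial^\beta\phi_1|\leq A_1B_1^{|\beta|}(\beta!)^{s}
\qquad\text{and}\qquad
|\partial^{\alpha-\beta}\phi_2|\leq A_2B_2^{|\alpha-\beta|}((\alpha-\beta)!)^{s}.
\]
Setting $B=\max\{B_1,B_2\}$, we get $B_1^{|\beta|}B_2^{|\alpha-\beta|}\leq B^{|\alpha|}$.

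Next we use the identity $\binom{\alpha}{\beta}\beta!(\alpha-\beta)!=\alpha!$ together with $s\geq1$ to estimate
\[
\binom{\alpha}{\beta}(\beta!)^s((\alpha-\beta)!)^s=\alpha!\,\bigl(\beta!(\alpha-\beta)!\bigr)^{s-1}\leq \alpha!\,(\alpha!)^{s-1}=(\alpha!)^s .
\]
The inequality in the middle holds because $\beta!(\alpha-\beta)!\leq\alpha!$, which is immediate since the binomial coefficient is at least $1$.

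Summing over $\beta\leq\alpha$ then yields
\[
|\partial^\alpha(\phi_1\phi_2)|\leq A_1A_2B^{|\alpha|}(\alpha!)^s\cdot\#\{\beta:\beta\leq\alpha\}.
\]
The number of terms is $\prod_j(\alpha_j+1)\leq\prod_j 2^{\alpha_j}=2^{|\alpha|}$. Hence
\[
|\partial^\alpha(\phi_1\phi_2)|\leq A_1A_2(2B)^{|\alpha|}(\alpha!)^s,
\]
which is exactly the claim with $A_3=A_1A_2$ and $B_3=2\max\{B_1,B_2\}$.

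The only point that needs care is the middle step, where the binomial coefficient must be absorbed without losing more than a factor of $2^{|\alpha|}$. The estimate $\beta!(\alpha-\beta)!\leq\alpha!$ handles this cleanly. The remainder of the proof is a routine application of the Leibniz rule.
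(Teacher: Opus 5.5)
Your proposal is correct and follows the paper's proof essentially step for step. You apply the Leibniz rule and replace $s_i$ by $s$ and $B_i$ by $\max\{B_1,B_2\}$. You then absorb the binomial coefficient via $\binom{\alpha}{\beta}(\beta!)^s((\alpha-\beta)!)^s\leq(\alpha!)^s$, which the paper writes as $(\alpha!)^s\binom{\alpha}{\beta}^{1-s}$, and finally bound the number of terms by $\prod_j(\alpha_j+1)\leq 2^{|\alpha|}$.
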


\begin{proof}
Set \(s=\max\{s_1,s_2\}\) and \(B=\max\{B_1,B_2\}\). For every multi-index
\(\alpha\in\mathbb N_0^d\), the Leibniz rule gives
\[
\partial^\alpha(\phi_1\phi_2)
=
\sum_{\beta\leq \alpha}
\binom{\alpha}{\beta}
(\partial^\beta \phi_1)(\partial^{\alpha-\beta}\phi_2).
\]
Hence,
\[
|\partial^\alpha(\phi_1\phi_2)(x)|
\le
A_1A_2
\sum_{\beta\leq \alpha}
\binom{\alpha}{\beta}
B_1^{|\beta|}B_2^{|\alpha-\beta|}
(\beta!)^{s_1}((\alpha-\beta)!)^{s_2}.
\]
Since \(s\ge s_1,s_2\) and \(B=\max\{B_1,B_2\}\),
\[
|\partial^\alpha(\phi_1\phi_2)(x)|
\le
A_1A_2 B^{|\alpha|}
\sum_{\beta\leq \alpha}
\binom{\alpha}{\beta}
(\beta!)^s((\alpha-\beta)!)^s.
\]
Using
\[
\binom{\alpha}{\beta}
=\frac{\alpha!}{\beta!(\alpha-\beta)!},
\]
we get
\[
\binom{\alpha}{\beta}
(\beta!)^s((\alpha-\beta)!)^s
=
(\alpha!)^s \binom{\alpha}{\beta}^{1-s}
\le
(\alpha!)^s,
\]
because \(s\ge 1\). Therefore,
\[
|\partial^\alpha(\phi_1\phi_2)(x)|
\le
A_1A_2 B^{|\alpha|}(\alpha!)^s
\#\{\beta:\beta\le \alpha\}.
\]
Finally,
\[
\#\{\beta:\beta\le \alpha\} = \prod_{j=1}^d(\alpha_j+1) \le 2^{|\alpha|}
.\]
Thus,
\[
|\partial^\alpha(\phi_1\phi_2)(x)|
\le
A_1A_2 (2B)^{|\alpha|}(\alpha!)^s.
\]
Hence, \(\phi_1\phi_2\) is Gevrey--\(s\), with constants \((A_1A_2,2\max\{B_1,B_2\})\). 
\end{proof}

\begin{lemma}\label{lemma:Gevrey:composition}
Let \(s \geq 1\) and \(A_0,A_1,B_0,B_1 > 0\) be fixed constants. 
If \(\psi\in \mathcal G^s(\R^N:A_0,A_1)\) and \(f\in \mathcal G^s(K,\R^N:B_0,B_1)\), then 
\(\psi\circ f\in \mathcal G^s(K:A_0,d^s B_1(1+N B_0 A_1))\).
\end{lemma}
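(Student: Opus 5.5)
The plan is to use the Faà di Bruno formula and reduce the Gevrey--$s$ case to the analytic case $s=1$. The factor $d^s$ in the statement comes from the conversion $\alpha!\le|\alpha|!\le d^{|\alpha|}\alpha!$. So first I replace multi-indices by ordered derivative strings. It suffices to show that for every $n$ and every choice of directions $j_1,\dots,j_n\in\{1,\dots,d\}$,
\[
|\partial_{j_1}\cdots\partial_{j_n}(\psi\circ f)|\le A_0\,\bigl(B_1(1+NB_0A_1)\bigr)^n (n!)^s .
\]
Indeed, $(n!)^s\le d^{sn}(\alpha!)^s$ for $|\alpha|=n$, which yields the constant $d^sB_1(1+NB_0A_1)$. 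For the hypotheses I use the corresponding weaker bounds $|\partial_{j_1}\cdots\partial_{j_m} f_i|\le B_0B_1^m (m!)^s$ and $|\partial^\gamma\psi|\le A_0A_1^{|\gamma|}(|\gamma|!)^s$, which follow from $\alpha!\le|\alpha|!$.

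Next I expand with the combinatorial (set-partition) form of Faà di Bruno. The string $\partial_{j_1}\cdots\partial_{j_n}(\psi\circ f)$ is a sum over set partitions $\{P_1,\dots,P_k\}$ of $\{1,\dots,n\}$ and over component indices $i_1,\dots,i_k\in\{1,\dots,N\}$. Each term is
\[
(\partial_{i_1}\cdots\partial_{i_k}\psi)\circ f\cdot\prod_{l=1}^k\partial_{P_l}f_{i_l}.
\]
Bounding each factor, the term for a fixed partition with block sizes $n_1,\dots,n_k$ is at most
\[
N^kA_0A_1^kB_0^kB_1^n\,(k!)^s\prod_l (n_l!)^s .
\]
The key combinatorial inequality is $k!\prod_l n_l!\le n!$ whenever $n_l\ge1$ and $\sum_l n_l=n$. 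It holds because $n!/\prod_l n_l!$ counts ordered splittings into blocks of sizes $n_l$, and these dominate the $k!$ orderings of a singleton-type choice; an induction on $k$ also works. Hence
\[
(k!)^s\prod_l(n_l!)^s\le (n!)^{s-1}\,k!\prod_l n_l!,
\]
and the problem reduces to the analytic case with an extra overall factor $(n!)^{s-1}$.

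For the analytic case I argue by majorant series. The sum $\sum_{\text{partitions}} N^kA_0A_1^kB_0^kB_1^n\,k!\prod_l n_l!$ is exactly $n!$ times the $t^n$-coefficient of the composition $G(F(t))$. Here
\[
G(u)=A_0\sum_k (NA_1u)^k=\frac{A_0}{1-NA_1u},\qquad F(t)=B_0\sum_{m\ge1}(B_1t)^m=\frac{B_0B_1t}{1-B_1t}.
\]
This is precisely Faà di Bruno applied to these power series, whose derivatives at $0$ are $k!\,A_0(NA_1)^k$ and $m!\,B_0B_1^m$. Now
\[
G(F(t))=\frac{A_0(1-B_1t)}{1-B_1(1+NA_1B_0)t}.
\]
Its $t^n$-coefficient is at most $A_0\bigl(B_1(1+NA_1B_0)\bigr)^n$, since the numerator factor $(1-B_1t)$ only reduces the coefficients. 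Multiplying by $n!\cdot(n!)^{s-1}$ gives the claimed bound.

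The main obstacle is bookkeeping in the multivariate Faà di Bruno formula. One must check that the vector-valued chain rule with $N$ components produces exactly the factor $N^k$ and the set-partition sum, so that it matches the one-variable majorant composition term by term. The combinatorial inequality $k!\prod_l n_l!\le n!$ is the other point needing care, though it is elementary.
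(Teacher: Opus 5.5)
Your proof is correct. It shares the paper's skeleton: Fa\`a di Bruno, the inequality $k!\prod_l n_l!\le n!$ to split off a factor $(n!)^{s-1}$, and the multinomial bound $n!\le d^{n}\alpha!$. The bookkeeping, however, is organized differently.

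The paper uses the multi-index form of Fa\`a di Bruno, indexed by families $\lambda_{j,\beta}$, and bounds $(\nu!)^s\le (k!)^s$. It then counts ordered lists $((j_1,\beta_1),\dots,(j_k,\beta_k))$ with $\beta_1+\dots+\beta_k=\alpha$ through an injective word-with-cuts encoding, obtaining at most $N^k d^m\binom{m-1}{k-1}$. Powers of $d$ therefore enter twice: as $d^{m(s-1)}$ from $(m!)^{s-1}\le d^{m(s-1)}(\alpha!)^{s-1}$, and as $d^m$ from the count.

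You instead pass to directional strings, which isolates all dependence on $d$ in the single final step $(n!)^s\le d^{sn}(\alpha!)^s$. The set-partition chain rule then turns the remaining sum into an exact one-variable Fa\`a di Bruno sum, which you evaluate in closed form through the majorant $G(F(t))$ rather than by counting. The two computations produce the same number. For $n\ge1$, $[t^n]G(F(t))=A_0B_1^n\,x(1+x)^{n-1}$ with $x=NA_1B_0$, which is exactly the paper's $\sum_k x^k\binom{m-1}{k-1}=x(1+x)^{m-1}$. Equivalently, the sum of $k!\prod_l n_l!\,x^k$ over set partitions of $\{1,\dots,n\}$ equals $n!\,x(1+x)^{n-1}$.

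Your route is shorter, avoids the $\lambda_{j,\beta}$ notation, and the majorant viewpoint adapts readily to other weight sequences. The paper's route is entirely elementary and needs no generating functions.

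The only loose spot in your write-up is the justification of $k!\prod_l n_l!\le n!$. The clean argument is that $\binom{n}{n_1,\dots,n_k}$ does not decrease when one part increases by one, since it is multiplied by $(n+1)/(n_l+1)\ge1$. Starting from all parts equal to one, it is therefore at least $\binom{k}{1,\dots,1}=k!$.
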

\begin{proof}
Write \(f=(f_1,\dots,f_N)\). The case \(\alpha=0\) is immediate, since
\[
|(\psi\circ f)(x)|\leq A_0.
\]

Now let \(\alpha\neq 0\), and set \(m=|\alpha|\). By the multivariable
Faà di Bruno formula,
\[
\partial^\alpha(\psi\circ f)(x)
=
\sum_{k=1}^m
\sum_{\substack{\nu\in\mathbb N_0^N\\ |\nu|=k}}
(\partial^\nu\psi)(f(x))
\sum_{\lambda\in\Lambda(\alpha,\nu)}
\frac{\alpha!}
{
 \prod_{j=1}^N\prod_{\beta\neq 0}
 \lambda_{j,\beta}!(\beta!)^{\lambda_{j,\beta}}}
\prod_{j=1}^N\prod_{\beta\neq 0}
\bigl(\partial^\beta f_j(x)\bigr)^{\lambda_{j,\beta}},
\]
where \(\Lambda(\alpha,\nu)\) denotes the set of finitely supported
families
\[
\lambda
=
(\lambda_{j,\beta})_{
1\leq j\leq N,\,
\beta\in\mathbb N_0^d\setminus\{0\}}
\]
satisfying
\[
\sum_{\beta\neq 0}\lambda_{j,\beta}
=
\nu_j,
\qquad
j=1,\dots,N,
\]
and
\[
\sum_{j=1}^N\sum_{\beta\neq 0}
\lambda_{j,\beta}\beta
=
\alpha.
\]

For every \(\lambda\in\Lambda(\alpha,\nu)\), we have
\[
\sum_{j,\beta}\lambda_{j,\beta}
=
|\nu|
=
k
\]
and
\[
\sum_{j,\beta}\lambda_{j,\beta}|\beta|
=
|\alpha|
=
m.
\]
Using the Gevrey estimates for \(\psi\) and the coordinate functions
\(f_j\), we obtain
\[
\begin{aligned}
|\partial^\alpha(\psi\circ f)(x)|
&\leq
A_0B_1^m
\sum_{k=1}^m
(B_0A_1)^k
\sum_{\substack{\nu\in\mathbb N_0^N\\ |\nu|=k}}
(\nu!)^s
\sum_{\lambda\in\Lambda(\alpha,\nu)}
\frac{\alpha!}{\prod_{j,\beta}\lambda_{j,\beta}!}
\prod_{j,\beta}
(\beta!)^{(s-1)\lambda_{j,\beta}}.
\end{aligned}
\]

Fix \(k\), \(\nu\), and \(\lambda\in\Lambda(\alpha,\nu)\). Choose an
ordered list
\[
\beta_1,\dots,\beta_k
\]
in which each nonzero multiindex \(\beta\) occurs with multiplicity
\[
\sum_{j=1}^N\lambda_{j,\beta}.
\]
Then
\[
\beta_1+\cdots+\beta_k=\alpha.
\]
Set \(q_\ell=|\beta_\ell|\). Since \(q_\ell\geq 1\) and
\[
q_1+\cdots+q_k=m,
\]
by basic properties of multinomial coefficients we have
\[
\frac{m!}{q_1!\cdots q_k!}\geq k! 
\implies k!\prod_{\ell=1}^k q_\ell!\leq m! 
\implies k!\prod_{j,\beta} (\beta!)^{\lambda_{j,\beta}} \leq m!,
\]
where the last implication uses that \(\beta!\leq |\beta|!\) for every multiindex \(\beta\). 

Also,
\[
\frac{m!}{\alpha!}
=
\binom{m}{\alpha_1,\dots,\alpha_d}
\leq
d^m,
\]
since the multinomial coefficient on the left is one term in the
expansion of
\[
(1+\cdots+1)^m=d^m.
\]
It follows that
\[
(k!)^{s-1}
\prod_{j,\beta}
(\beta!)^{(s-1)\lambda_{j,\beta}}
\leq (m!)^{s-1} \leq 
d^{m(s-1)}(\alpha!)^{s-1}.
\]

Since \(|\nu|=k\), we also have \(\nu!\leq k!\). Because \(s\geq 1\), it follows that
\( (\nu!)^s = \nu!(\nu!)^{s-1} \leq k!(k!)^{s-1}\). Therefore,
\[
(\nu!)^s
\frac{\alpha!}{\prod_{j,\beta}\lambda_{j,\beta}!}
\prod_{j,\beta}
(\beta!)^{(s-1)\lambda_{j,\beta}}
\leq
d^{m(s-1)}(\alpha!)^s
\frac{k!}{\prod_{j,\beta}\lambda_{j,\beta}!}.
\]
Thus,
\[
\begin{aligned}
|\partial^\alpha(\psi\circ f)(x)|
&\leq
A_0d^{m(s-1)}B_1^m(\alpha!)^s
\sum_{k=1}^m
(B_0A_1)^k
\sum_{\substack{\nu\in\mathbb N_0^N\\ |\nu|=k}}
\sum_{\lambda\in\Lambda(\alpha,\nu)}
\frac{k!}{\prod_{j,\beta}\lambda_{j,\beta}!}.
\end{aligned}
\]

For fixed \(k\), the innermost double sum counts ordered lists
\[
\bigl((j_1,\beta_1),\dots,(j_k,\beta_k)\bigr)
\]
such that
\[
j_\ell\in\{1,\dots,N\},
\qquad
\beta_\ell\in\mathbb N_0^d\setminus\{0\},
\]
and
\[
\beta_1+\cdots+\beta_k=\alpha.
\]
Indeed, each family \(\lambda\) determines a multiset in which the pair $(j,\beta)$ occurs with multiplicity $\lambda_{j,\beta}$. The number of orderings of this multiset is the multinomial coefficient
\[
\frac{k!}{\prod_{j,\beta}\lambda_{j,\beta}!}.
\]
Conversely, every ordered list of pairs satisfying the stated conditions determines a unique family \(\lambda\) and hence a unique multiindex \(\nu\). Thus, the double sum counts precisely these ordered lists.

There are \(N^k\) choices for the labels \(j_1,\dots,j_k\). We next
bound the number of ordered decompositions
\[
\alpha=\beta_1+\cdots+\beta_k,
\qquad
\beta_\ell\neq 0.
\]
Encode each \(\beta_\ell\) by the word
\[
1^{\beta_{\ell,1}}
2^{\beta_{\ell,2}}
\cdots
d^{\beta_{\ell,d}}.
\]
Concatenating these \(k\) words produces a word of length \(m\) in an
alphabet of \(d\) symbols, together with \(k-1\) cuts between
consecutive symbols. This encoding is injective. Hence, the number of
such ordered decompositions is at most
\[
d^m\binom{m-1}{k-1}.
\]
Therefore,
\[
\sum_{\substack{\nu\in\mathbb N_0^N\\ |\nu|=k}}
\sum_{\lambda\in\Lambda(\alpha,\nu)}
\frac{k!}{\prod_{j,\beta}\lambda_{j,\beta}!}
\leq
N^kd^m\binom{m-1}{k-1}.
\]

It follows that
\[
|\partial^\alpha(\psi\circ f)(x)|
\leq
A_0d^{ms}B_1^m(\alpha!)^s
\sum_{k=1}^m
(NB_0A_1)^k
\binom{m-1}{k-1}.
\]
Finally,
\[
\sum_{k=1}^m
(NB_0A_1)^k 
\binom{m-1}{k-1}
=
NB_0A_1(1+NB_0A_1)^{m-1}
\leq
(1+NB_0A_1)^m.
\]
Consequently,
\[
|\partial^\alpha(\psi\circ f)(x)|
\leq
A_0
\bigl[d^sB_1(1+NB_0A_1)\bigr]^m
(\alpha!)^s.
\]
Thus,
\[
\psi\circ f
\in
\mathcal G^s
\bigl(
K:A_0,d^sB_1(1+NB_0A_1)
\bigr).
\]
\end{proof}

We can extend the definition of Gevrey functions to a real analytic manifold via local coordinates. We do so here for the sphere. For each \(x\in \mathbb{S}^{d-1}\), let \(\kappa(x,\pi/2) \subset \mathbb{S}^{d-1}\) denote the closed hemisphere centered at \(x\), and fix the diffeomorphism
\[
\Phi_x:\kappa(x,\pi/2)\to B_{d-1}(\pi/2)
\]
satisfying \(\Phi_x(x)=0\), arising from geodesic normal coordinates.

\begin{definition}[Gevrey functions on the sphere]
A function \(f:\mathbb{S}^{d-1}\to \C\) is called \emph{Gevrey--$s$} if there exist constants \(A,B>0\) such that, 
\[
\sup_{x\in \mathbb{S}^{d-1}}
\bigl\|\partial^\alpha (f\circ \Phi_x^{-1})\bigr\|_{L^\infty(B_{d-1}(\pi/2))} \leq A B^{|\alpha|} (\alpha !)^{s},
\]
for every multiindex \(\alpha\in \N_0^{d-1}\). 
We refer to \((A,B)\) as Gevrey--\(s\) constants of \(f\). We let $\cG^s(\mathbb{S}^{d-1} , \R^n : A,B)$ denote the set of vector-valued mappings $\Phi = (\Phi_i)_{1 \leq i \leq n}$ such that, for every $i \in \{1,\cdots,n\}$, $\Phi_i : \mathbb{S}^{d-1} \rightarrow \R$ is Gevrey--$s$ with constants $(A,B)$. Similarly, $\cG^s(\mathbb{S}^{d-1}:A,B)$ denotes the set of complex-valued
Gevrey--$s$ functions on $\mathbb{S}^{d-1}$ with constants $(A,B)$.
\end{definition}

Equivalently, \(f:\mathbb{S}^{d-1}\to \C\) is Gevrey--$s$ if, for every \(x\in \mathbb{S}^{d-1}\), the coordinate representation
\[
f\circ \Phi_x^{-1}: B_{d-1}(\pi/2)\to \C
\]
is Gevrey--\(s\) with constants uniform in \(x\). 

\begin{definition}\label{def:Gevrey:scale}
A family  $\{f_\nu\}_{\nu\in I}$ in a Hilbert space $\cH$ is a \emph{frame} for $\cH$ provided that 
\[
A \| f \|^2 \leq \sum_\nu |\langle f, f_\nu \rangle |^2 \leq B \| f \|^2 \mbox{ for all } f \in \cH.
\] 
Here, $0 < A \leq B < \infty$ are called frame constants for $\{ f_\nu \}$. 
If $\| f_\nu \| = 1$ for all $\nu$ then $\{f_\nu\}$ is called a \emph{unit-norm frame}. 
If $A=B$ then $\{f_\nu\}$ is called a \emph{tight frame}. 
A tight frame is \emph{Parseval} if $A=B=1$. 
\end{definition}
In this language, an orthonormal basis for $\cH$ is a unit-norm Parseval frame for $\cH$. We use the following elementary result from frame theory, which can be found in classical texts on the subject; for example, see \cite{christensen2003introduction}. 
\begin{lemma}\label{lem:frame_theory}
Suppose $T : \cH_1 \rightarrow \cH_2$ is a bounded invertible linear operator.
If $\{f_\nu\}$ is a frame for $\cH_1$ with frame constants $(A,B)$, then $\{ T f_\nu \}$ is a frame for $\cH_2$ with frame constants $(\| T^{-1} \|^{-2} A,\| T \|^2 B)$.
\end{lemma}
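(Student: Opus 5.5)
We verify the two frame inequalities directly for the family $\{Tf_\nu\}$ in $\cH_2$, moving each inner product back to $\cH_1$ with the adjoint $T^*$. Given $g\in\cH_2$, we have
\[
\langle g, Tf_\nu\rangle_{\cH_2} = \langle T^* g, f_\nu\rangle_{\cH_1}.
\]
Since $T^*g\in\cH_1$, the frame property of $\{f_\nu\}$ gives
\[
A\|T^*g\|^2 \leq \sum_\nu |\langle g, Tf_\nu\rangle|^2 \leq B\|T^*g\|^2 .
\]
The problem therefore reduces to two-sided bounds for $\|T^*g\|$ in terms of $\|g\|$.

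For the upper bound we use $\|T^*\|=\|T\|$, so $\|T^*g\|^2\leq \|T\|^2\|g\|^2$. This yields the upper frame constant $\|T\|^2B$.

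For the lower bound, the key point is that $T^*$ is invertible with $(T^*)^{-1}=(T^{-1})^*$. Taking adjoints of $TT^{-1}=I$ and $T^{-1}T=I$ gives this identity, and it also shows $\|(T^*)^{-1}\|=\|T^{-1}\|$. Writing $g=(T^{-1})^*T^*g$, we get
\[
\|g\|\leq \|T^{-1}\|\,\|T^*g\|,
\]
that is, $\|T^*g\|^2\geq \|T^{-1}\|^{-2}\|g\|^2$. This yields the lower frame constant $\|T^{-1}\|^{-2}A$.

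Boundedness of $T^{-1}$ is either part of the hypothesis or follows from the bounded inverse theorem, since $T$ is a bounded bijection between Hilbert spaces. There is no real obstacle here. The only step needing care is the adjoint identity in the lower bound; once it is stated, both inequalities are one line each, and together they give the frame constants $(\|T^{-1}\|^{-2}A,\ \|T\|^2B)$ claimed in the lemma.
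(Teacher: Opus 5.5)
Your proof is correct: transferring each coefficient via $\langle g, Tf_\nu\rangle = \langle T^*g, f_\nu\rangle$ and then using $\|T^*\| = \|T\|$ and $(T^*)^{-1} = (T^{-1})^*$ is the standard argument. The paper does not prove this lemma itself but cites Christensen's textbook, whose argument is essentially this same adjoint reasoning.
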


\section{Wave packet frames on the ball}
\label{wave-packet-on-ball}
In this section, we begin the proof of Theorem \ref{thm:ball-frame-energy}. Our goal is to construct a wave packet family adapted to the geometry of the ball $B_d(R)$. The construction parallels the construction on the disk \cite{HIM-disk}, but the boundary geometry now requires localization on spherical caps and a flattening procedure via local coordinate charts on $\mathbb{S}^{d-1}$.

We first make a remark on the invariances of the problem at hand. We fix a bounded measurable set $S \subset \R^d$ and a dyadic number $R \geq 2$ such that $\diam(S) R \geq 2$, as in the statement of Theorem \ref{thm:ball-frame-energy}. Note that the conclusion of the theorem is invariant under the rescalings $S \mapsto  2^{-J} S$ and $R \mapsto  2^J R$ for integer $J$. Indeed, if $\{\psi_\nu(x)\}$ is a frame for $L^2(B_d(R))$ satisfying the energy estimates \eqref{ineq:energy-estimate} with respect to the set $S$, then $\{ 2^{-Jd/2}\psi_\nu(2^{-J}x)\}$ is a frame for $L^2(B_d(2^J R))$ satisfying the energy estimates \eqref{ineq:energy-estimate} with respect to the set $2^{-J} S$. Furthermore, the error term that controls the exceptional family is preserved by this scaling, $H_\eta(2^{-J} S, 2^J R, \varepsilon) = H_\eta(S,R,\varepsilon)$. This is easily verified by using the homogeneity of the Minkowski content.

Therefore, by rescaling the set $S$ and $R$, if necessary, we may assume without loss of generality that $1/2 \leq \diam(S) \leq 1$. 

Given a vector $h \in \R^d$, we are also free to translate the set via $S \mapsto S + h$. Indeed, if $\{\psi_\nu(x)\}$ is a frame for $L^2(B_d(R))$ satisfying the energy estimates \eqref{ineq:energy-estimate} with respect to the set $S$, then the modulated family $\{ e^{i h \cdot x} \psi_\nu(x) \}$ is a frame for $L^2(B_d(R))$ satisfying the energy estimates \eqref{ineq:energy-estimate} with respect to the translated set $S + h$.

Thus, without loss of generality, we may assume that $S$ contains the origin. We record these assumptions as:
\[
S \subset B_d(1), \;\; 1/2 \leq \diam(S) \leq 1.
\]
We begin by decomposing $B_d(R)$ into radial--angular sectors of Whitney type.
We then choose Gevrey cutoffs adapted to this decomposition and use them to define
two classes of packets: interior packets whose phases are linear in Cartesian
coordinates and boundary packets whose phases are linear in the flattened
radial--tangential coordinates near $\partial B_d(R)$. We conclude by proving that
the resulting family is a unit-norm frame for $L^2(B_d(R))$ with frame bounds
independent of $R$ and of the Gevrey index $s$.

\subsection{A Whitney-type  decomposition of the ball} 
Consider the ball
\[
B_d(R):=\{x\in \R^d: |x|\leq R\}\subset \R^d,
\]
and assume \(R=2^{j_{\max}}\) for some integer \(j_{\max}\ge1\).

For each integer \(j\leq j_{\max}\), define the angular scale
\begin{equation}\label{delta_j:defn}
\delta_j:=\max\{ \pi 2^j/R, \pi/(2R)\}.
\end{equation}
Thus, \(\delta_j=\pi 2^j/R\) for \(0\leq j\leq j_{\max}\), while \(\delta_j=\pi/(2R)\) for \(j<0\).

We decompose the sphere $\mathbb{S}^{d-1}$ at scale $\delta_j$ as follows. Let \(\cK_j\) be a collection of spherical caps \(\kappa\subset \mathbb{S}^{d-1}\) of angular radius \(\delta_j\), whose centers form a maximal strictly \(\delta_j\)-separated subset of \(\mathbb{S}^{d-1}\); thus, the geodesic distance between distinct centers is larger than \(\delta_j\). Since \(\delta_{j_{\max}}=\pi\), \(\cK_{j_{\max}}\) consists of the single cap \(\kappa = \mathbb{S}^{d-1}\). 
The family \(\cK_j\) satisfies the following standard properties:
\begin{itemize}
\item The caps in \(\cK_j\) cover \(\mathbb{S}^{d-1}\).
\item Each \(\kappa\in \cK_j\) has angular radius \(\delta_j\).
\item The overlap of the family of doubled caps \(\{\kappa^*:\kappa\in\cK_j\}\) is uniformly bounded: there exists a constant \(C_d\) depending only on  the dimension \(d\) such that 
\[
\sum_{\kappa\in\cK_j}\mathbf 1_{\kappa^*}(\omega) \leq C_d, \qquad \omega\in\mathbb{S}^{d-1}.
\]
\end{itemize}
Because $\delta_j = \delta_{-1}$ for $j < 0$, we may assume
\[
\cK_j = \cK_{-1} \mbox{ for } j < 0.
\]

For \(j\leq j_{\max}\) and \(\kappa\in\cK_j\), define the sector
\[
S_{j,\kappa} := \left\{ x\in B_d(R): R-2^j\leq |x|\leq R-2^{j-1}, \quad \frac{x}{|x|}\in\kappa \right\},
\]
and its enlargement
\[
S_{j,\kappa}^* := \left\{   x\in B_d(R):R-(1.1)2^j\leq |x|\leq R-(0.9)2^{j-1},   \quad\frac{x}{|x|}\in\kappa^*   \right\}.
\]
For each fixed \(j\), the family \(\{S_{j,\kappa}:\kappa\in\cK_j\}\) covers the spherical shell
\[
\{x\in\R^d: R-2^j\leq |x|\leq R-2^{j-1}\},
\]
and the full collection
\[
\{S_{j,\kappa}: j\leq j_{\max},\ \kappa\in\cK_j\}
\]
covers \(B_d(R)\) up to a set of Lebesgue measure zero.
Moreover, the enlarged sectors have uniformly bounded overlap:
\[
\sum_{j\leq j_{\max}} \sum_{\kappa\in\cK_j} \mathbf 1_{S_{j,\kappa}^*}(x) \leq  C_d, \qquad x\in B_d(R).
\]
The spherical measure of a cap \(\kappa\in\cK_j\) satisfies
\[
\sigma(\kappa)\approx \delta_j^{d-1}.
\]
Using polar coordinates, it follows that
\begin{equation}\label{eqn:sect_meas}
|S_{j,\kappa}^*| \approx R^{d-1} 2^j \delta_j^{d-1}
\approx
\begin{cases}
2^{jd}, & 0\leq j\leq j_{\max},\\
2^j, & j<0.
\end{cases}
\end{equation}
We classify the sectors into two types. We call \(S_{j,\kappa}\) an
\textbf{interior sector} if \(0\leq j\leq j_{\max}\); in this case,
\[
S_{j,\kappa}\subset B_d(R-\tfrac12).
\]
We call \(S_{j,\kappa}\) a \textbf{boundary sector} if \(j<0\); in this case,
\[
S_{j,\kappa}\subset B_d(R)\setminus B_d(R-\tfrac12),
\]
so that the sector is near the boundary of \(B_d(R)\). 
\subsection{Radial and angular cutoffs}
For each integer \(j\leq j_{\max}\), define the interval
\begin{equation}\label{radial_int}
I_j := \bigl[R-1.1\cdot 2^{j}, R-0.9\cdot 2^{j-1}\bigr]\cap[0,R].
\end{equation}
Fix \(s>1\). Choose a family of nonnegative radial cutoffs
\[
\{\phi_j\}_{j\leq j_{\max}}\subset C^\infty([0,R])
\]
satisfying:
\begin{itemize}
\item[(R1)]
\(\displaystyle \sum_{j\leq j_{\max}} \phi_j(r)^2 \equiv 1\) for \(r\in[0,R)\),
\item[(R2)]
\(
\displaystyle \supp(\phi_j)\subset I_j,
\)
\item[(R3)]
Each \(\phi_j\) belongs to 
$\cG^s(I_j : C_1,C_2 2^{-j})$; 
this means
\[
\|\partial_r^k \phi_j\|_{L^\infty} \leq C_1 C_2^k (k!)^s 2^{-jk}
\qquad \text{for all} \quad k\ge0, 
\]
where the constants \(C_1,C_2>0\) depend only on \(s\) and are independent of \(j\) and \(k\).
\item[(R4)] 
For each \(j\leq j_{\max}\), there exists a subinterval \(I_j^\circ\subset I_j\) of comparable length, say \(|I_j^\circ| \geq |I_j|/2\), such that
\[
\phi_j(r)\geq c>0 \quad\text{on } I_j^\circ
\]
where \(c\) is an absolute constant (independent of \(j\) and \(s\)).
\end{itemize}
For the construction of these cutoffs, see Lemma~B.1 of Appendix~B in \cite{HIM-disk}.

Property (R2) implies that only nearest-neighbor overlaps occur:
\[
\supp(\phi_j)\cap \supp(\phi_{j'}) \neq \varnothing \quad  \Longrightarrow \quad  |j-j'| \leq 1.
\]
Since \(2^{j_{\max}}=R\), we have
\[
I_{j_{\max}}=[0,0.55R], \qquad I_j\subset [0.45R,R], \quad \text{for } j<j_{\max}.
\]
Consequently, (R1) and (R2) imply that
\begin{equation}\label{eqn:phi_top_scale}
\phi_{j_{\max}}(r)=1, \qquad r\in [0,0.45R].
\end{equation}
Similarly, for each \(j<j_{\max}\),
\[
\phi_j(r)=1, \qquad r\in I_j^\circ,
\]
where we take 
\[
I_j^\circ := [R-0.9\cdot 2^j, R-1.1\cdot 2^{j-1}].
\]
For each \(j\leq j_{\max}\), choose a family of nonnegative angular cutoffs
\[
\{\eta_{j,\kappa}\}_{\kappa\in\cK_j} \subset C^\infty(\mathbb{S}^{d-1})
\]
satisfying:
\begin{itemize}
\item[(A1)]
\(\displaystyle \sum_{\kappa\in\cK_j} \eta_{j,\kappa}(\omega)^2 \equiv 1, \qquad \omega\in\mathbb{S}^{d-1},\)
\item[(A2)]
\( \displaystyle \supp(\eta_{j,\kappa})\subset \kappa^*, \qquad \kappa\in\cK_j,\)
\item[(A3)]
Each \(\eta_{j,\kappa}\) is in \(\cG^s(\mathbb{S}^{d-1} : C_1,C_2\delta_j^{-1})\). Equivalently, for every multiindex \(\alpha\in\mathbb Z_{\ge0}^{d-1}\) and every hemispherical coordinate chart $\Phi_z : \kappa(z,\pi/2) \rightarrow B_{d-1}(\pi/2)$,
\[
\sup_{x\in B_{d-1}(\pi/2)} \left| \partial_x^\alpha \bigl(\eta_{j,\kappa}\circ\Phi_z^{-1}\bigr)(x) \right| \leq C_1 C_2^{|\alpha|} (\alpha!)^s \delta_j^{-|\alpha|}.
\]
where the constants $C_1,C_2 > 0$ depend only on $s$ and $d$ (and are uniform in $z$).
\item[(A4)] For each \(\kappa\in \cK_j\), there exists a cap \(\kappa^\circ\subset \kappa\) of comparable radius such that
\[
\eta_{j,\kappa}\geq c>0 \quad\text{on }\kappa^\circ,
\]
where $c$ is a dimensional constant.
\end{itemize}
At the top scale \(j=j_{\max}\), the partition is trivial, hence:
\[
\eta_{j_{\max},\mathbb{S}^{d-1}}
\equiv 1.
\]
Because $\cK_j = \cK_{-1}$ for $j < 0$, we may assume that the partition of unity is independent of $j$ for $j < 0$. That is, $\eta_{j,\kappa} = \eta_{-1,\kappa}$ for $j < 0$.

The construction of such cutoff families is standard; see \cite{HIM-disk} for the case $d=2$. The same argument extends to higher dimensions by replacing arcs on \(\mathbb{S}^1\) with spherical caps on \(\mathbb{S}^{d-1}\) and using geodesic normal coordinates.

\subsection{Wave packets in $L^2(B_d(R))$} 
The wave packets are indexed by an integer \(j\leq j_{\max}\), specifying a radial shell; a cap \(\kappa\in\cK_j\); and a lattice index \(\mathbf{m}\in\Z^d\), specifying the modulation parameters. We denote the full index set by
\begin{equation}\label{eqn:indexset}
\cI
:=
\left\{
\nu=(j,\kappa,\mathbf{m})
:
j\leq j_{\max},\ 
\kappa\in\cK_j,\ 
\mathbf{m}\in\Z^d
\right\}.
\end{equation}

For \(0\leq j\leq j_{\max}\), we define the \textbf{interior wave packets} by
\begin{equation}\label{int_wp}
\psi_{j,\kappa,\mathbf{m}}(x)
=
C_{j,\kappa}\,2^{-jd/2}
\phi_j(r)\eta_{j,\kappa}(\omega)
e^{ic_\ast 2^{-j}\mathbf{m}\cdot x},
\qquad x=r\omega,
\end{equation}
where \(r\geq0\), \(\omega\in\mathbb{S}^{d-1}\), and \(c_\ast>0\) is a sufficiently small constant to be chosen later. The constant \(C_{j,\kappa}>0\) is chosen so that
\[
\|\psi_{j,\kappa,\mathbf{m}}\|_{L^2(\R^d)}=1.
\]
Since the modulation factor has modulus one, \(C_{j,\kappa}\) is independent of \(\mathbf{m}\). Here, \(\mathbf{m}\in\Z^d\) parametrizes the linear phase modulation in Cartesian coordinates.

We next define the boundary packets. Given a cap
\[
\kappa=\kappa(z,\delta),
\qquad
0<\delta\leq\frac{\pi}{2},
\]
let \(\Phi_\kappa\) denote the restriction to \(\kappa\) of the geodesic normal coordinate map
\[
\Phi_z:\kappa\left(z,\frac{\pi}{2}\right)
\longrightarrow
B_{d-1}\left(\frac{\pi}{2}\right).
\]
Thus,
\[
\Phi_\kappa:\kappa\longrightarrow B_{d-1}(\delta)
\]
is the geodesic normal coordinate map centered at \(z\).

For \(j<0\), write
\[
\mathbf{m}=(m_1,\mathbf{m}')
\in\Z\times\Z^{d-1}.
\]
We define the \textbf{boundary wave packets} by
\begin{equation}\label{bdry_wp}
\psi_{j,\kappa,\mathbf{m}}(r\omega)
=
\begin{cases}
\displaystyle
C_{j,\kappa}\,2^{-j/2}
\phi_j(r)\eta_{j,\kappa}(\omega)
\exp\left[
\pi i\left(
m_1 2^{-j}r
+
\mathbf{m}'\cdot
\frac{R}{\pi}\Phi_{\kappa^*}(\omega)
\right)
\right],
& \omega\in\kappa^*,\\[1.2em]
0,
& \omega\notin\kappa^*.
\end{cases}
\end{equation}
As above, \(C_{j,\kappa}>0\) is chosen so that
\[
\|\psi_{j,\kappa,\mathbf{m}}\|_{L^2(\R^d)}=1.
\]

For $j<0$, $\kappa \in \mathcal{K}_j$, the cap $\kappa^*$ has angular radius $2 \delta_j = \pi/R$ and $\Phi_{\kappa^*}$ is a geodesic coordinate chart on $\kappa^*$. Therefore,
\[
\frac{R}{\pi}\Phi_{\kappa^*}(\omega)
\in B_{d-1}(1)
\qquad
\text{for }\omega\in\kappa^*.
\]
In \eqref{bdry_wp}, the integer \(m_1\) parametrizes modulation in the radial variable, while \(\mathbf{m}'\in\Z^{d-1}\) parametrizes modulation in the tangential geodesic coordinates.

By the support properties of \(\phi_j\) and \(\eta_{j,\kappa}\),
\[
\operatorname{supp}(\psi_{j,\kappa,\mathbf{m}})
\subset S_{j,\kappa}^*
\subset B_d(R),
\qquad
j\leq j_{\max},
\quad
\kappa\in\cK_j.
\]

We finally record uniform bounds for the normalization constants. For
\(0\leq j\leq j_{\max}\),
\[
|S_{j,\kappa}^*|\approx_d 2^{jd},
\]
whereas for \(j<0\),
\[
|S_{j,\kappa}^*|\approx_d 2^j.
\]
Thus, the prefactors \(2^{-jd/2}\) and \(2^{-j/2}\) compensate for the volumes of the corresponding supports. Moreover, the cutoff
\[
\phi_j(r)\eta_{j,\kappa}(\omega)
\]
is bounded above by \(1\) and bounded below by a positive constant on a subset of \(S_{j,\kappa}^*\) whose measure is comparable to \(|S_{j,\kappa}^*|\). Since the phase factors have modulus one, it follows that
\[
\|\psi_{j,\kappa,\mathbf{m}}\|_{L^2(\R^d)}^2
\approx_d C_{j,\kappa}^2.
\]
The normalization
\[
\|\psi_{j,\kappa,\mathbf{m}}\|_{L^2(\R^d)}=1
\]
therefore implies
\[
0<c_d\leq C_{j,\kappa}\leq C_d<\infty.
\]

\subsection{Frame property of the wave packet system}
We now prove that the wave packets constructed above form a unit-norm frame for
$L^2(B_d(R))$. The proof has two parts. For the interior packets, each enlarged sector is contained in a cube of sidelength comparable to $2^j$, and the packet family is obtained by restricting a Fourier basis on that cube.
For the boundary packets, each boundary sector is flattened by means of a
coordinate chart on $B_{d-1}$. We then compare the induced weighted
$L^2$-space with an unweighted Euclidean $L^2$-space, and finally transport a
Fourier system back to the curved sector using the nonlinear coordinate chart.
\begin{lemma}\label{lem:frame}
If $c_\ast \in (0,1)$ in \eqref{int_wp} is chosen as a sufficiently small dimensional constant, then the system \(\{\psi_{j,\kappa,\mathbf{m}}\}\) forms a unit-norm frame for \(L^2(B_d(R))\), with frame bounds $0 < A \leq B < \infty$ that are dimensional constants, independent of $s$ and $R$.
\end{lemma}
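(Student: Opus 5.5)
We will use the partition of unity to reduce the frame property to a local statement on each sector. By (R1) and (A1), for every \(f\in L^2(B_d(R))\) and almost every \(x=r\omega\) we have \(\sum_{j}\sum_{\kappa\in\cK_j}\phi_j(r)^2\eta_{j,\kappa}(\omega)^2=1\). Integrating this identity against \(|f|^2\) gives
\[
\|f\|_2^2=\sum_{j,\kappa}\|\phi_j\eta_{j,\kappa} f\|_2^2 .
\]
Set \(g_{j,\kappa}:=\phi_j\eta_{j,\kappa}f\), which is supported in \(S^*_{j,\kappa}\). Since \(\phi_j\eta_{j,\kappa}\) is real, we can write \(\langle f,\psi_{j,\kappa,\mathbf m}\rangle=C_{j,\kappa}\,\langle g_{j,\kappa},e_{j,\kappa,\mathbf m}\rangle\), where \(e_{j,\kappa,\mathbf m}\) is the normalized pure phase (times the appropriate power of \(2^{-j}\)). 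The normalization constants satisfy \(C_{j,\kappa}\approx_d 1\). It therefore suffices to show that for each fixed \((j,\kappa)\) and each \(g\in L^2(S^*_{j,\kappa})\),
\[
\sum_{\mathbf m}|\langle g,e_{j,\kappa,\mathbf m}\rangle|^2\approx_d \|g\|_2^2 ,
\]
with dimensional constants. Summing over \((j,\kappa)\) then yields the frame bounds, and no bounded-overlap argument is needed. The unit-norm property holds by construction, and the constants do not depend on \(s\), since the Gevrey regularity of the cutoffs is never used here.

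\emph{Interior sectors} (\(0\le j\le j_{\max}\)). We will show that \(S^*_{j,\kappa}\) lies in a cube \(Q\) of sidelength \(L\le C_d2^j\). This follows from the radial thickness \(\approx 2^j\) together with the angular width \(\lesssim R\delta_j\approx 2^j\); for \(j=j_{\max}\) one simply takes \(Q\supset B_d(R)\). We choose \(c_*\) so small that \(2\pi/(c_*2^{-j})\ge L\), i.e. \(c_*\le 2\pi/C_d\). Extend \(g\) by zero to the cube \(Q'\) of sidelength \(\ell=2\pi 2^j/c_*\ge L\) containing \(Q\). On \(L^2(Q')\), the functions \(\ell^{-d/2}e^{ic_*2^{-j}\mathbf m\cdot x}\) form an orthonormal basis. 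Parseval's identity then gives \(\sum_{\mathbf m}|\langle g,e^{ic_*2^{-j}\mathbf m\cdot x}\rangle|^2=\ell^d\|g\|^2\). Because \(\ell^d\approx_d 2^{jd}\), the factor \(2^{-jd}\) in \eqref{int_wp} makes this comparable to \(\|g\|^2\).

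\emph{Boundary sectors} (\(j<0\)). Here we flatten the sector. We use the map \(\Theta(r\omega)=(2^{-j}r/2,\ \tfrac{R}{2\pi}\Phi_{\kappa^*}(\omega))\), which sends \(S^*_{j,\kappa}\) into a fixed box of unit size; the radial range of \(I_j\) has length \(\approx 2^j\), and the angular image lies in \(B_{d-1}(1/2)\). In these coordinates the phase in \eqref{bdry_wp} becomes \(e^{2\pi i\mathbf m\cdot u}\), which is the standard Fourier basis on a unit cube \([a,a+1]\times[-1/2,1/2]^{d-1}\) containing the image. To check this, note that the radial phase \(\pi m_12^{-j}r=2\pi m_1u_1\) and the angular phase \(\pi \mathbf m'\cdot\frac R\pi\Phi=2\pi\mathbf m'\cdot u'\).

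The change of variables is \(dx=r^{d-1}\,dr\,d\sigma(\omega)\), with \(d\sigma=J_z(y)\,dy\) by Lemma~\ref{lem:cap-chart-jacobian}. It gives \(\langle g,\psi\rangle=C\,2^{-j/2}\cdot 2^{j}(2\pi/R)^{d-1}\int \tilde g(u)W(u)e^{-2\pi i\mathbf m\cdot u}\,du\), with weight \(W=r^{d-1}J_z\). On the boundary shell \(r\in[R-1,R]\), we have \(r^{d-1}\approx_d R^{d-1}\), using \(R\ge2\); also \(J_z\approx_d 1\), where \(J_z\) is evaluated at points of \(B_{d-1}(\pi/R)\). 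Thus \(W\approx_d R^{d-1}\). By Parseval on the unit cube,
\[
\sum_{\mathbf m}|\langle g,\psi_{j,\kappa,\mathbf m}\rangle|^2\approx 2^{j}R^{-2(d-1)}\int|\tilde g|^2W^2\,du\approx 2^jR^{-(d-1)}\int|\tilde g|^2 W\,du\cdot\frac{W}{R^{d-1}} .
\]
The first factor \(\approx 2^jR^{-(d-1)}\cdot\) the Jacobian of \(\Theta\), and \(\int|\tilde g|^2 W\,du\) equals \(\|g\|_2^2\) up to that Jacobian. Tracking the powers, everything collapses to \(\approx\|g\|_2^2\). This is exactly the comparison of the weighted and unweighted \(L^2\) spaces: multiplication by \(W^{1/2}\) is bounded and invertible with dimensional norms, and Lemma~\ref{lem:frame_theory} transfers the orthonormal Fourier basis to a frame with constants \((\|T^{-1}\|^{-2},\|T\|^2)\).

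The main obstacle is this boundary step, specifically getting the bookkeeping right. One must verify that the chart image of \(\kappa^*\), scaled by \(R/\pi\), fits inside a unit-periodicity cell for the \(\mathbf m'\)-modulation. For this, \(\tfrac{R}{\pi}\Phi_{\kappa^*}(\kappa^*)\subset B_{d-1}(1)\) only gives diameter \(2\), so I expect to need a factor adjustment, or to note that the \(\pi\)-normalization makes the period \(2\); in that case one uses the basis on \([-1,1]^{d-1}\) with a harmless constant. The radial support likewise has to fit in a period, which holds because \(|I_j|=1.1\cdot2^j-0.45\cdot2^j<2\cdot 2^j\), the period of \(e^{\pi i m_12^{-j}r}\). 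One must also confirm that the Jacobian bounds \(r^{d-1}J_z\approx R^{d-1}\) hold uniformly in \(\kappa\) and \(R\).
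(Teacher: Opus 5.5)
Your proposal is correct and follows essentially the same route as the paper: the same sum-of-squares reduction to single sectors, Parseval on a cube of side $\approx 2^j$ with $c_*$ tied to that sidelength for the interior packets, and, for the boundary packets, flattening via $\Phi_{\kappa^*}$ followed by Parseval on a box combined with the pointwise comparability $r^{d-1}J_{\kappa^*}\approx_d R^{d-1}$. Your rescaled unit cube is just the paper's box $H_j=I_j^+\times Q$ with sides $2^{j+1}$ and $2\pi/R$. The periodicity worry in your last paragraph is already settled by your own choice $u'=\frac{R}{2\pi}\Phi_{\kappa^*}(\omega)\in B_{d-1}(1/2)$ together with $|I_j|=0.65\cdot 2^j<2^{j+1}$.
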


\begin{proof} Consider the partition of unity at level $j\leq j_{\max}$ on the sphere $\mathbb{S}^{d-1}$,
\begin{align}\label{partition_of_unity_on_sphere}
\sum_{\kappa \in  \cK_j }\eta_{j,\kappa}(\omega)^2  \equiv 1 \qquad  ( \omega \in   \mathbb{S}^{d-1}).
\end{align}
Also consider the partition of unity on $[0,R)$,
\begin{align}\label{partition_of_unity_on_interval}
\sum_{j \leq j_{\max}}\phi_{j}(r)^2  \equiv 1 \qquad  ( r \in  [0,R)).
\end{align}
Here, we consider $r = |x|$ and $\omega = x/|x|$ as the radial coordinates of $x \in \R^d \setminus \{0\}$. Then for any $f\in L^2(B_d(R), dx)$, 
\begin{align}\label{eqn:norm0}
\|f\|_{L^2(B_d(R),dx)}^2
= \int_{B_d(R)} |f(x)|^2 \left(\sum_{j,\kappa} \phi_j(r)^2 \eta_{j,\kappa}(\omega)^2 \right) dx  
= \sum_{j,\kappa} \| f \phi_j \eta_{j,\kappa} \|_{L^2(S_{j,\kappa}^*,dx)}^2.  
\end{align}

{\it Frame property of the interior wave packets:} Fix a patch index $(j,\kappa)$ with $0\leq j\leq j_{\max}$ and $\kappa\in \cK_j$.
The cutoff function $\phi_j(r) \eta_{j,\kappa}(\omega)$ (considered as a function of $x$) is supported on the region $S_{j,\kappa}^*\subset B_d(R)$, where
\[
S_{j,\kappa}^*
:=
\left\{
x = r \omega \in B_d(R):
r \in I_j, \omega\in \kappa^*
\right\},
\]
where the interval $I_j = [R-1.1\cdot 2^j,\;R-0.9\cdot 2^{j-1}]$ contains the support of $\phi_j(r)$, and
the spherical cap $\kappa^*$ contains the support of $\eta_{j,\kappa}(\omega)$.

Fix
\[
r_j:=R-\tfrac342^j \in I_j,
\qquad
x_{j,\kappa}:=r_j\omega_\kappa \in S_{j,\kappa}^*.
\]
If $x=r \omega \in S_{j,\kappa}^*$, then $d_{\mathbb{S}^{d-1}}(\omega, \omega_\kappa) \leq 2 \delta_j$ (as $\omega \in \kappa^*$ and $\kappa^*$ has radius $2 \delta_j$) and $|r-r_j| \leq   2^j$. Therefore,
\[
|x-x_{j,\kappa}| = | r \omega - r_j \omega_\kappa|  \leq |r-r_j| |\omega| + r_j |\omega - \omega_\kappa| \lesssim 2^j + R \delta_j  \lesssim 2^j.
\]
We deduce that $S_{j,\kappa}^*$ is contained in a ball of radius $\approx 2^j$ centered at $x_{j,\kappa}$, which is contained in a cube of diameter $\approx 2^j$ centered at $x_{j,\kappa}$. Thus,  there exists a constant $C_\ast = C_\ast(d) >1$, such that
\[
S_{j,\kappa}^*\subset T_{j,\kappa}:=
x_{j,\kappa}+[-C_\ast2^{j-1},C_\ast2^{j-1}]^d.
\]
Consider the orthonormal Fourier basis for \(L^2(T_{j,\kappa})\): 
\[
\bigl\{ C_\ast^{-d/2} 2^{-jd/2} e^{2\pi i2^{-j} \mathbf{m} \cdot x/ C_\ast} : \mathbf{m}\in\Z^d \bigr\}. 
\] 
By Parseval's identity, 
\[
\begin{aligned}
\| f\phi_j\eta_{j,\kappa} \|_{L^2(S_{j,\kappa}^*,dx)}^2
&= \sum_{\mathbf{m}\in\Z^d}
\bigl|\bigl\langle f\phi_j\eta_{j,\kappa},
C_\ast^{-d/2} 2^{-jd/2} e^{2\pi i2^{-j} \mathbf{m} \cdot x / C_\ast}\bigr\rangle\bigr|^2 
\\
&= \sum_{\mathbf{m}\in\Z^d}
\bigl|\bigl\langle f,
C_\ast^{-d/2} \phi_j\eta_{j,\kappa} 2^{-jd/2} e^{2\pi i2^{-j} \mathbf{m} \cdot x / C_\ast}\bigr\rangle\bigr|^2 
\\
&= C_\ast^{-d} \sum_{\mathbf{m}\in\Z^d} |\langle f, C_{j,\kappa}^{-1} \psi_{j,\kappa,\mathbf{m}}\rangle|^2,
\end{aligned}
\]
where the \(\psi_{j,\kappa,\mathbf{m}}\) are defined in \eqref{int_wp} with $c_\ast = 2 \pi C_\ast^{-1}$. We may assume $C_\ast > 2\pi$ in the above, and hence, $c_\ast \in (0,1)$ as required. Because $0 < c \leq C_{j,\kappa} \leq C$ for dimensional constants $c,C$,  we obtain
\begin{equation}\label{eqn:norm1}
c_0 \| f \phi_j \eta_{j,\kappa} \|_{L^2(S_{j,\kappa}^*,dx)}^2 \leq \sum_{\mathbf{m} \in \Z^d} |\langle f, \psi_{j,\kappa,\mathbf{m}} \rangle|^2 \leq C_0 \| f \phi_j \eta_{j,\kappa} \|_{L^2(S_{j,\kappa}^*,dx)}^2
\end{equation}
for dimensional constants $c_0,C_0 > 0$.

{\it Frame property of the boundary wave packets:} Fix a patch index $(j,\kappa)$ with $j<0$ and $\kappa\in \cK_j$.  
Let $\kappa^*$ denote the enlarged cap of angular radius $2 \delta_j = \pi/R$, and let $\Phi_{\kappa^*}:\kappa^*\to B_{d-1}(\pi/R)$ be the standard  diffeomorphism arising from geodesic coordinates. We denote $J_{\kappa^*}(y)dy$ for the pushforward of the surface measure $d \sigma$ on $\kappa^*$ by the diffeomorphism $\Phi_{\kappa^*}$. 
Thus, for every integrable function $h$ on $\kappa^*$ we have 
\[ 
\int_{\kappa^*}  h(\omega) d \sigma(\omega)
=
\int_{B_{d-1}(\pi/R)} h(\Phi_{\kappa^*}^{-1}(y))  J_{\kappa^*}(y)dy.
\] 
Hence, for every integrable function $g$ on $S_{j,\kappa}^*$,
\[ 
\int_{S_{j,\kappa}^*}  g(x) dx 
=
\int_{I_j} \int_{\kappa^*}  g(r\omega) r^{d-1} d \sigma(\omega) dr 
=
\int_{I_j}\int_{B_{d-1}(\pi/R)} g(r \Phi_{\kappa^*}^{-1}(y)) r^{d-1}  J_{\kappa^*}(y)dydr.
\] 

In particular, applying this identity to $|g|^2$ shows that the coordinate transformation gives rise to an isometry
\[
U:L^2(S_{j,\kappa}^*,dx) \to L^2\bigl(I_j\times  B_{d-1}(\pi/R),r^{d-1}J_{\kappa^*}(y)\,dr\,dy\bigr),
\qquad
(Ug)(r,y)=g\bigl(r\Phi_{\kappa^*}^{-1}(y)\bigr).
\]
Because $J_{\kappa^*}(y) \approx_d 1$ and $r \approx_d R$ on $I_j$  ($j<0$), the weight $ r^{d-1} J_{\kappa^*}(y) \approx_d R^{d-1}$ on $I_j$. Therefore, 
\begin{equation}\label{id_isom}
\operatorname{id}: L^2(I_j \times B_{d-1}(\pi/R), r^{d-1}J_{\kappa^*}(y) dr dy) \to L^2(I_j \times B_{d-1}(\pi/R), R^{d-1} dr dy)
\end{equation}
is an isomorphism, with the norm of this isomorphism and the norm of its inverse bounded by dimensional constants.

Choose an enlarged box  
$H_j \supset I_j\times B_{d-1}(\pi/R)$ in $\R^d$ such that  
\[
H_j := I_j^+ \times Q,
\]
for an interval $I_j^+ \supset I_j$ of size $|I_j^+| = 2^{j+1}$ and an axis-parallel cube $Q \supset B_{d-1}(\pi/R)$ of sidelength $\delta_Q = 2 \pi/R$.

Let $\cB:=\{h_{\mathbf{m}}\}_{{\bf m}\in \mathbb{Z}^d}$ be the orthonormal Fourier basis for  $L^2(H_j, R^{d-1} drdy)$. Thus, 
\[
h_{\mathbf{m}}(r,y) = R^{(1-d)/2} |I_j^+|^{-1/2}|Q|^{-1/2} \exp\Bigg( 2\pi i \Big(\frac{m_1 r}{2^{j+1}}
+\frac{(\mathbf{m}'\cdot y) R }{ 2 \pi}\Big)\Bigg),
\]
where $\mathbf{m} = (m_1,\mathbf{m}') \in \Z \times \Z^{d-1}$.  
Let $\widetilde h_{\bf m}$ denote the restriction of $h_{{\bf m}}$ to $I_j\times   B_{d-1}(\pi/R)$.
Then $\{\widetilde h_{{\bf m}}\}_{{{\bf m}}\in\mathbb \Z^d}$ is a Parseval frame for $L^2(I_j\times  B_{d-1}(\pi/R), R^{d-1} dr\,dy)$ (viewed as a subspace of $L^2(H_j, R^{d-1} drdy)$). That is, for every $f$,
\[
\|f\|_{L^2(I_j\times B_{d-1}(\pi/R), R^{d-1} drdy)}^2 = \sum_{\mathbf{m} \in \Z^d} |\langle f, \widetilde{h}_{\mathbf{m}}\rangle|^2.
\]
Because \eqref{id_isom} is an isomorphism, and by Lemma \ref{lem:frame_theory}, the restricted Fourier system $\{\widetilde h_{{\bf m}}\}_{{{\bf m}}\in\mathbb \Z^d}$ is therefore a frame for the weighted space $L^2(I_j\times B_{d-1}(\pi/R), r^{d-1} J_{\kappa^*}(y) drdy)$, with frame bounds  $0 < c < C < \infty$ that are dimensional constants. That is, for any $f$,
\[
c \|f\|_{L^2(I_j\times B_{d-1}(\pi/R), r^{d-1} J_{\kappa^*}(y) drdy)}^2 \leq \sum_{\mathbf{m} \in \Z^d} |\langle f, \widetilde h_{\mathbf{m}}\rangle|^2 \leq C \|f\|_{L^2(I_j\times B_{d-1}(\pi/R), r^{d-1} J_{\kappa^*}(y) drdy)}^2.
\]
Pulling this frame back via the isometry $U^{-1}$, define 
\[
f_{{\bf m}}(x)
=
\widetilde h_{{\bf m}}\bigl(r,\Phi_{\kappa^*}(\omega)\bigr),
\qquad x=r\omega\in S_{j,\kappa}^*.
\]
Then the frame condition asserts that, for any   $f\in L^2(S_{j,\kappa}^*,dx)$, 
\[
c \|f\|_{L^2(S_{j,\kappa}^*, dx)}^2 \leq \sum_{\mathbf{m} \in \Z^d} |\langle f, f_{\mathbf{m}}\rangle|^2 \leq C \|f\|_{L^2(S_{j,\kappa}^*, dx)}^2.
\]
Replacing $f$ by $f \phi_j \eta_{j,\kappa}$, we obtain 
\[
c \|f \phi_j \eta_{j,\kappa}\|_{L^2(S_{j,\kappa}^*, dx)}^2 \leq  \sum_{\mathbf{m} \in \Z^d} |\langle f, \phi_j \eta_{j,\kappa} f_{\mathbf{m}}\rangle|^2 \leq C \|f \phi_j \eta_{j,\kappa}\|_{L^2(S_{j,\kappa}^*, dx)}^2.
\]
Moreover, 
\[ 
(\phi_j \eta_{j,\kappa} f_{\mathbf{m}})(x)= 
\overline{a} \phi_j(r) \eta_{j,\kappa}(\omega) 2^{-j/2} \exp\left( \pi i \left(m_1r 2^{-j} + \mathbf{m}' \cdot \Phi_{\kappa^*}(\omega) R/\pi\right)\right), \;\;\; x = r \omega \in S_{j,\kappa}^*,
\]
for an absolute constant $\overline{a} > 0$. Therefore, $$\phi_j \eta_{j,\kappa} f_{\mathbf{m}} = \overline{a} C_{j,\kappa}^{-1} \psi_{j,\kappa,\mathbf{m}}.$$ 
It follows that 
\begin{equation}\label{eqn:norm2}
c_1 \|f \phi_j \eta_{j,\kappa}\|_{L^2(S_{j,\kappa}^*, dx)}^2 \leq  \sum_{\mathbf{m} \in \Z^d} |\langle f, \psi_{j,\kappa,\mathbf{m}}\rangle|^2 \leq C_1 \|f \phi_j \eta_{j,\kappa}\|_{L^2(S_{j,\kappa}^*, dx)}^2.
\end{equation}
Using \eqref{eqn:norm0}, and summing \eqref{eqn:norm1} and \eqref{eqn:norm2} over the relevant $j,\kappa$, we deduce that
\[c \| f \|^2_{L^2(B_d(R))} \leq \sum_{j,\kappa,\mathbf{m}} | \langle f, \psi_{j,\kappa,\mathbf{m}} \rangle |^2 \leq C \| f \|^2_{L^2(B_d(R))}.\]
\end{proof}
\section{Fourier localization and energy concentration estimates}\label{sect:energy-estimate}
The purpose of this section is to prove the following energy estimates. 
Throughout this section, \(S\subset \R^d\) is a bounded measurable set satisfying the constraints in the proposition below.
\begin{proposition}[Energy concentration of wave packets on the ball]\label{prop:energy-estimate}
Let $d \geq 2$, let $s > 1$, and let $R \geq 2$ be dyadic. Let $\{\psi_\nu\}_{\nu\in \cI}$ be the Gevrey--$s$ wave packet system on $B_d(R)$ constructed in Section ~\ref{wave-packet-on-ball}.
Let \(S\subset B_d(1)\) satisfy $1/2 \leq \diam(S) \leq 1$, and $\cM^{d-\eta}(\partial S)<\infty$ for some \(0<\eta\leq 1\). Then, for each $\varepsilon \in (0,1/2]$, there exists a decomposition of the index set
\[
\cI = \cI_1 \cup \cI_2 \cup \cI_3
\]
such that
\[
\sum_{\nu \in \cI_1} \|\widehat{\psi_\nu}\|_{L^2(S)}^2 + \sum_{\nu \in \cI_2} \|\widehat{\psi_\nu}\|_{L^2(\mathbb{R}^d \setminus S)}^2
\leq \varepsilon^2,
\]
where
\[
\#(\cI_3) \lesssim_{d,s,\eta} \left\{
\begin{aligned}
&\cM^{d-\eta}(\partial{S})
\biggl( R^{d-1} \log(R/\varepsilon)^{sd+1} + R^{d-\eta} \log(R/\varepsilon)^{s\eta} \biggr), \quad 0 < \eta < 1 \\
&\cM^{d-1}(\partial S) R^{d-1} \log(R/\varepsilon)^{sd+1}, \quad \eta=1
\end{aligned}
\right.
\]
\end{proposition}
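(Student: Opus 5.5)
The plan is to attach to each packet $\psi_\nu$ a \emph{frequency center} $\xi_\nu\in\R^d$ and a \emph{frequency scale} $\rho_\nu>0$, and to show that $\widehat{\psi_\nu}$ is concentrated in the ball $B_d(\xi_\nu,\rho_\nu)$ with Gevrey tails. The partition is then the natural one:
\[
\cI_2=\{\nu: B_d(\xi_\nu,L\rho_\nu)\subset S\},\qquad \cI_1=\{\nu: B_d(\xi_\nu,L\rho_\nu)\cap S=\varnothing\},\qquad \cI_3=\cI\setminus(\cI_1\cup\cI_2),
\]
with $L:=C\log(R/\varepsilon)^s$. For interior packets ($0\le j\le j_{\max}$) we take $\xi_\nu=c_\ast2^{-j}\mathbf m$ and $\rho_\nu=2^{-j}$.

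For boundary packets ($j<0$) the amplitude lives on a slab of radial width $2^j$ and tangential width $\approx1$. We therefore take
\[
\xi_\nu=\pi2^{-j}m_1\omega_\kappa+\pi\,\mathbf m'
\]
(the tangential part read in the frame of $T_{\omega_\kappa}\mathbb S^{d-1}$), with radial uncertainty $2^{-j}$ and tangential uncertainty $1$.

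\emph{Step 1: Fourier decay of the packets.} For interior packets, write $\psi_\nu(x)=e^{ic_\ast2^{-j}\mathbf m\cdot x}a_{j,\kappa}(x)$ with $a_{j,\kappa}=C_{j,\kappa}2^{-jd/2}\phi_j(|x|)\eta_{j,\kappa}(x/|x|)$.
\begin{itemize}
\item Using (R3), (A3), Lemma~\ref{lem:cap-chart-jacobian}, the composition Lemma~\ref{lemma:Gevrey:composition} and the product Lemma~\ref{lemma:Gevrey:multiplication}, show that $a_{j,\kappa}(x_{j,\kappa}+2^jy)$ is Gevrey-$s$ in $y$ with constants $(C2^{-jd/2},C)$, uniformly in $j,\kappa$. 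The point is that on $S^*_{j,\kappa}$ the maps $x\mapsto|x|$ and $x\mapsto\Phi_z(x/|x|)/\delta_j$ are real-analytic at scale $2^j$, because $R\delta_j\approx2^j$.
\item Lemma~\ref{lemma:Gevrey:Fourier_decay} then gives
\[
|\widehat{\psi_\nu}(\xi)|\lesssim 2^{jd/2}\exp\bigl(-c(2^j|\xi-\xi_\nu|)^{1/s}\bigr).
\]
\end{itemize}

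For boundary packets, Taylor expand the phase $\mathbf m'\cdot\frac R\pi\Phi_{\kappa^*}(\omega)$ around $\omega_\kappa$ in Cartesian coordinates.
\begin{itemize}
\item When $|\mathbf m'|\lesssim RL$, the nonlinear remainder has size $\lesssim|\mathbf m'|R\cdot R^{-2}$ together with Gevrey derivative bounds at the anisotropic scales $(2^j,1)$. Absorbing it into the amplitude, the amplitude is Gevrey after the anisotropic rescaling, so Lemma~\ref{lemma:Gevrey:Fourier_decay} (applied after a linear change of variables) gives decay in $2^j|\langle\xi-\xi_\nu,\omega_\kappa\rangle|$ and in the tangential component of $\xi-\xi_\nu$.
\item For $|\mathbf m'|\gg RL$, or $|m_1|$ large, the phase has gradient far from $S\subset B_d(1)$. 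Here I would instead integrate by parts (nonstationary phase with Gevrey amplitudes) to get $\|\widehat{\psi_\nu}\|_{L^2(S)}\lesssim\exp\bigl(-c(|\mathbf m|)^{1/s}\bigr)$-type bounds, which are summable.
\end{itemize}
I expect this step, getting uniform Gevrey control of the curved boundary phase in Cartesian coordinates, to be the main obstacle. The first thing I would try is to verify directly that $\Phi_{\kappa^*}$, rescaled by $R$, is Gevrey (indeed analytic) with constants $O(1)$ on the rescaled slab.

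\emph{Step 2: tails for $\cI_1,\cI_2$.} For $\nu\in\cI_1$, the decay bound gives
\[
\|\widehat{\psi_\nu}\|^2_{L^2(S)}\lesssim\exp\bigl(-c(2^j\operatorname{dist}(\xi_\nu,S))^{1/s}\bigr),
\]
and for $\nu\in\cI_2$ the same bound holds for $\|\widehat{\psi_\nu}\|^2_{L^2(\R^d\setminus S)}$ with $\operatorname{dist}(\xi_\nu,\R^d\setminus S)$. Sum over $\mathbf m$ by comparing with an integral, and over $\kappa$ (there are $\lesssim(R2^{-j})^{d-1}$ caps) and $j$.

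Very thin boundary shells, $j\le j_0:=-C\log(R/\varepsilon)$, need separate treatment. Since $S\subset B_d(1)$, one has
\[
\|\widehat{\psi_\nu}\|^2_{L^2(S)}\le|S|\,\|\psi_\nu\|_{L^1}^2\lesssim2^{j}.
\]
Combined with decay in $m_1$, this lets us put all such packets into $\cI_1$ at total cost $\lesssim R^{d-1}2^{j_0}$. The choice of $L$ then makes the total error at most $\varepsilon^2$, at the price of polynomial factors in $R$ that the logarithm absorbs.

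\emph{Step 3: counting $\cI_3$.} If $\nu\in\cI_3$ then $\operatorname{dist}(\xi_\nu,\partial S)\le L\rho_\nu$. For fixed $(j,\kappa)$ with $j\ge0$, rescale by $c_\ast2^{-j}$ and apply Lemma~\ref{lem:lattice_near_bd_min}. This gives
\[
\#\{\mathbf m\}\lesssim2^{jd}\,|(\partial S)_{CL2^{-j}}|.
\]
If $L2^{-j}\le\diam(S)$, this is $\lesssim2^{jd}\cM^{d-\eta}(\partial S)(L2^{-j})^\eta$; otherwise it is $\lesssim L^d$, using $|(\partial S)_t|\lesssim t^d$ for $t\ge1$. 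Multiplying by $(R2^{-j})^{d-1}$ caps and summing over $0\le j\le\log_2R$:
\begin{itemize}
\item the first regime yields $\cM^{d-\eta}R^{d-1}L^\eta\sum_j 2^{j(1-\eta)}$, which is $\lesssim R^{d-\eta}L^\eta$ when $\eta<1$ and $R^{d-1}L\log R$ when $\eta=1$;
\item the second regime yields $R^{d-1}L^d\log(R/\varepsilon)$.
\end{itemize}
For boundary shells $j_0<j<0$, the same lattice count in the anisotropic lattice gives $\lesssim R^{d-1}L^d$ per shell. There are $|j_0|\lesssim\log(R/\varepsilon)$ shells, contributing $R^{d-1}L^d\log(R/\varepsilon)$. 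Together these give $\log(R/\varepsilon)^{sd+1}$, matching the claimed bounds for both $\eta<1$ and $\eta=1$, after normalizing by $\cM^{d-\eta}(\partial S)\gtrsim_{d,\eta}1$, which holds since $\diam S\ge1/2$.
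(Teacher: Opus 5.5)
Your interior analysis and your boundary bookkeeping (discarding shells $j\le j_0=-C\log(R/\varepsilon)$, keeping $\lesssim R^{d-1}L^d$ indices per remaining shell) match the paper. Step 1 for the boundary packets, however, has a genuine gap.

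\textbf{Where Step 1 fails.} Cartesian coordinates around $R\omega_\kappa$ with the anisotropic rescaling $(2^j,1)$ do not give an amplitude with $O(1)$ Gevrey constants once $2^j\ll R^{-1}$. That range cannot be avoided: you need $R^{d-1}2^{j_0}\lesssim\varepsilon^2$, so the shells $j_0<j<-\log_2 R$ must be treated. The obstruction is not the chart $\Phi_{\kappa^*}$, which is harmless after rescaling by $R$. It is the radial structure. On a tangential window of size $1$ one has $\nabla_{x'}|x|=x'/|x|=O(R^{-1})$, so $\phi_j(|x|)$ has tangential derivatives of size $2^{-j}/R\gg1$. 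Likewise the radial phase $\pi m_1 2^{-j}|x|$ contains the curvature term $\approx\pi m_1 2^{-j}|x'|^2/(2R)$, of size $|m_1|2^{-j}/R$.

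Geometrically, the slab has thickness $2^j$ but bends by a sagitta $\approx R^{-1}$ across its tangential extent. So $\widehat{\psi_\nu}$ is not localized in a $2^{-j}\times 1$ box at all. The Cartesian Gevrey lemma gives tangential decay only at scale $2^{-j}/R$ at best, and putting the corresponding $\mathbf m'$ into $\cI_3$ would make the boundary count polynomial in $R/\varepsilon$. Your integration-by-parts fallback is reserved for $|\mathbf m'|\gg RL$ or large $|m_1|$. So the packets with $L\lesssim|\mathbf m'|\lesssim RL$ in these shells, which must go into $\cI_1$, are left without an estimate. (Also, the phase is $\mathbf m'\cdot R\Phi_{\kappa^*}(\omega)$, so the tangential frequency is $\mathbf m'$, not $\pi\mathbf m'$.)

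\textbf{The missing idea.} You never need localization of $\widehat{\psi_\nu}$ outside $B_d(1)$. Since $S\subset B_d(1)$, it suffices to bound $|\widehat{\psi_\nu}(\xi)|$ for $|\xi|\le 1$. Do this in the flattened coordinates $(r,y)$, $y=\Phi_{\kappa^*}(\omega)$, where the support is the box $I_j\times B_{d-1}(\pi/R)$. For $|\xi|\le 1$, the factor $e^{-ir\Phi_{\kappa^*}^{-1}(y)\cdot\xi}$ is Gevrey-$1$ in $y$ with constants $O(R)$, i.e.\ $O(1)$ at the natural scale $R^{-1}$, and its $r$-derivatives are bounded by $1$.

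The paper integrates in $y$ first. Set $\cA(r)=\int\eta_{j,\kappa}(\Phi_{\kappa^*}^{-1}(y))\,e^{-ir\Phi_{\kappa^*}^{-1}(y)\cdot\xi}\,r^{d-1}J_{\kappa^*}(y)\,e^{iR\mathbf m'\cdot y}\,dy$. Then $\partial_r^k\cA$ is a sum of $k+1$ Fourier transforms, at frequency $-R\mathbf m'$, of Gevrey-$s$ functions with constants $(C^{k+1}k!\,R^{d-1},CR)$ supported on a set of measure $\lesssim R^{1-d}$. Hence $\cA\in\cG^1(I_j:Ce^{-c|\mathbf m'|^{1/s}},C)$, and so $\phi_j\cA\in\cG^s(I_j:Ce^{-c|\mathbf m'|^{1/s}},C2^{-j})$. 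The decay lemma at radial frequency $\pi m_1 2^{-j}$ then gives $|\widehat{\psi_\nu}(\xi)|\lesssim 2^{j/2}e^{-c|m_1|^{1/s}}e^{-c|\mathbf m'|^{1/s}}$ for $|\xi|\le1$, uniformly in $j<0$.

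With this bound the boundary partition is simply $\cI_3^{\mathrm{bdry}}=\{-2L_1<j<0,\ |m_1|+|\mathbf m'|<L_2\}$, with everything else in $\cI_1$. No anisotropic frequency centers are needed.

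The same bound also repairs your thin-shell step. A per-packet bound $\lesssim 2^j$ combined with decay in $m_1$ alone does not give a convergent sum over $\mathbf m'\in\Z^{d-1}$. You need decay in all of $\mathbf m$, or alternatively the Bessel bound $\sum_{\mathbf m}|\widehat{\psi_{j,\kappa,\mathbf m}}(\xi)|^2\lesssim\|\phi_j\eta_{j,\kappa}\|_2^2\lesssim 2^j$ from the proof of Lemma~\ref{lem:frame}.
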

We will prove this result in \S \ref{proof-of-prop:energy-estimate}. 
\begin{remark}
The decomposition $\mathcal I=\mathcal I_1\cup\mathcal I_2\cup\mathcal I_3$ depends on both $S$ and the choice of $\varepsilon$.
To keep the notation simple, we suppress this dependence in the notation.
\end{remark}
\subsection{Analysis of interior wave packets}
For $0\leq j\leq j_{\max}$ and $\kappa\in \cK_j$, set
\begin{equation}\label{eq:def-hjk}
h_{j,\kappa}(x)=\phi_j(|x|)\eta_{j,\kappa}(x/|x|),
\qquad x\in\R^d\setminus\{0\}.
\end{equation}
At the top scale $j=j_{\max}$, the angular partition is trivial and
$\kappa=\mathbb{S}^{d-1}$. Lemma~\ref{derivative--bounds--ball} below shows that
$h_{j,\kappa}$ has a smooth extension to $x=0$ and satisfies uniform Gevrey
estimates on $\R^d$.
\begin{lemma}\label{derivative--bounds--ball}
Let $s>1$, $0\leq j\leq j_{\max}$, and  $\kappa\in \cK_j$. Then $h_{j,\kappa}$ extends to  a  $C^\infty$ function on $\R^d$. Moreover, for every multi-index $\alpha \in \mathbb{Z}_{\geq 0}^d$,
\[
\bigl|\partial_x^\alpha h_{j,\kappa}(x)\bigr|
\leq A B^{|\alpha|} 2^{-j|\alpha|} (\alpha!)^s,
\qquad x \in \mathbb{R}^d,
\]
where $A,B > 0$ depend only on $d$ and $s$. That is, $h_{j,\kappa} \in \cG^s(\R^d : A,B 2^{-j})$. 
\end{lemma}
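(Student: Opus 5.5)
We split into the top scale $j=j_{\max}$ and the scales $0\le j<j_{\max}$, and in each case write $h_{j,\kappa}$ as a composition or product of Gevrey maps so that Lemmas~\ref{lemma:Gevrey:multiplication} and~\ref{lemma:Gevrey:composition} give the bounds. Throughout, the key quantitative fact is that $\supp h_{j,\kappa}\subset S_{j,\kappa}^*$, and for $j<j_{\max}$ this set lies in the shell $\{0.45R\le|x|\le R\}$, where $|x|\gtrsim R\ge 2^j$.

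\emph{Top scale.} Here $\eta\equiv1$, so $h=\phi_{j_{\max}}(|x|)$. By \eqref{eqn:phi_top_scale}, $h\equiv 1$ on $B_d(0.45R)$, which gives the smooth extension to $x=0$; all derivatives of $h$ vanish there. On the complementary region $|x|\ge 0.45R$, the map $x\mapsto|x|$ is real analytic. After rescaling $x=Ry$, the function $y\mapsto |y|$ on $\{|y|\ge 0.4\}$ is Gevrey--$1$ (hence Gevrey--$s$) with dimensional constants. Thus $x\mapsto|x|$ has constants $(CR, C R^{-1})$ on that region. Composing with $\phi_{j_{\max}}\in\cG^s(C_1,C_2R^{-1})$ via Lemma~\ref{lemma:Gevrey:composition}, with $N=1$, $B_0A_1 = CR\cdot C_2R^{-1}=O(1)$ and $B_1=CR^{-1}$, gives constants $(C_1, C'R^{-1})$, and $R^{-1}=2^{-j_{\max}}$. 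One technicality: Lemma~\ref{lemma:Gevrey:composition} requires $\psi$ Gevrey on $\R$. Extend $\phi_j$ by $0$ outside $[0,R]$; near $r=0$ use the constant extension, which is harmless since $\phi_{j_{\max}}$ is constant there.

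\emph{Scales $0\le j<j_{\max}$.} On the shell, write $h=\phi_j(|x|)\cdot \eta_{j,\kappa}(x/|x|)$. The radial factor is treated exactly as above: $\phi_j$ has constants $(C_1,C_2 2^{-j})$ and $|x|$ has constants $(CR,CR^{-1})$, so $B_0A_1\approx R2^{-j}$. This is large. Naively the output constant is $B_1(1+NB_0A_1)\approx R^{-1}\cdot R2^{-j}=2^{-j}$, which is exactly what we want. The angular factor is similar: near the support, use the chart $\Phi_z$ with $z=\omega_\kappa$, and write $\eta_{j,\kappa}(x/|x|)=\tilde\eta\circ F(x)$ with $F=\Phi_z(x/|x|)$ and $\tilde\eta=\eta_{j,\kappa}\circ\Phi_z^{-1}$. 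By Lemma~\ref{lem:cap-chart-jacobian}, $\Phi_z$ is real analytic; the inverse-function bounds on the compact region, after scaling by $R$, give $F\in\cG^1(C,CR^{-1})$ on $\{0.45R\le|x|\le R,\ x/|x|\in\kappa(z,\pi/3)\}$. By (A3), $\tilde\eta$ has constants $(C_1,C_2\delta_j^{-1})$ with $\delta_j\approx 2^j/R$. Then $B_0A_1\approx R2^{-j}$ and the output constant is $\approx R^{-1}\cdot R2^{-j}=2^{-j}$. Extend $\tilde\eta$ by zero outside the image of $\kappa^*$ to apply the lemma on $\R^{d-1}$. If $\delta_j$ is large, so that $\kappa^*$ is not inside a hemisphere, cover the support with $O(1)$ hemispherical charts and use a fixed analytic partition. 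Finally, Lemma~\ref{lemma:Gevrey:multiplication} combines the two factors with constants $(A,B2^{-j})$, and multi-index factorials are compatible since $\alpha!\le|\alpha|!\le d^{|\alpha|}\alpha!$.

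\emph{Main obstacle.} The hardest step is making precise the Gevrey-$1$ bounds for $x\mapsto x/|x|$ composed with $\Phi_z$, uniformly in $z$ and at the correct $R^{-1}$ scale. I would prove it by scaling to $R=1$ and invoking real analyticity together with compactness, using Cauchy estimates on a complex neighborhood. Care is also needed with the bookkeeping in Lemma~\ref{lemma:Gevrey:composition}: the product $B_0A_1\sim R2^{-j}$ must be paired with $B_1\sim R^{-1}$ so that the result scales as $2^{-j}$ and not worse.
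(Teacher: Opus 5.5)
Your proposal is correct and follows essentially the same route as the paper: on the shell you write $h_{j,\kappa}=\phi_j(|x|)\,\eta_{j,\kappa}(x/|x|)$, use homogeneity and rescaling to get $|x|\in\cG^1(CR,CR^{-1})$ and $\Phi_z(x/|x|)\in\cG^1(C,CR^{-1})$, apply the composition lemma so that $B_1(1+NB_0A_1)\approx R^{-1}\cdot R2^{-j}\approx 2^{-j}$, and finish with the product lemma, treating the top scale separately via $\phi_{j_{\max}}\equiv 1$ near the origin. The only difference is cosmetic: the paper covers the annulus with two fixed antipodal hemispherical charts rather than a chart centered at $\omega_\kappa$. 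In the large-$\delta_j$ case no partition of unity is needed (and compactly supported real-analytic ones do not exist), because the derivative bounds are pointwise and a finite cover of the shell by chart regions suffices.
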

\begin{proof}
We first assume that $0\leq j<j_{\max}$. Then
$ \supp(\phi_j)\subset I_j\subset [0.45R,R]\subset [R/4,R]$, so the function $h_{j,\kappa}$ is supported in the annulus
$$
\cA_R=\{x\in\R^d:R/4\leq |x|\leq R\}.
$$
Recall a function $\varphi$ on $\R^d$ is said to be $k$-homogeneous if $\varphi(tx) = t^k \varphi(x)$. If $\varphi$ is in $\cG^s(\cA_1:C_0,C_1)$ then $\varphi_R(\cdot) := \varphi(\cdot/R)$ is in $\cG^s(\cA_R: C_0,C_1 R^{-1})$. If $\varphi$ is also $k$-homogeneous, then $\varphi = R^k \varphi_R$ is in $\cG^s(\cA_R: C_0 R^k, C_1 R^{-1})$.

On the annulus $\cA_1$, the map $x\mapsto |x|$ is real analytic, therefore it is in $\cG^1(\cA_1:C,C)$ for some $C$. This map is also $1$-homogeneous. Thus, $x \mapsto |x|$ is in $\cG^1(\cA_R: CR,CR^{-1})$. 

According to condition (R3), $\phi_j \in \cG^s(\R:A_0,A_1 2^{-j})$. By the composition law of Gevrey classes (see Lemma \ref{lemma:Gevrey:composition}), $x \mapsto \phi_j(|x|)$ is in $\cG^s(\cA_R: C, CR^{-1}(1+R2^{-j}))$. Because $R \geq 2$ and $2^j \leq R$, we deduce that $x \mapsto \phi_j(|x|)$ is in $\cG^s(\cA_R: C, C 2^{-j})$. 

Choose two points $z_1,z_2\in \mathbb{S}^{d-1}$ such that the hemispheres $\kappa(z_\ell,\pi/2)$, $\ell=1,2$, cover $\mathbb{S}^{d-1}$. Define
$$
\cA_{R,\ell}
=
\{x\in \cA_R: x/|x|\in \kappa(z_\ell,\pi/2)\}, \;\; \ell = 1,2.
$$
For each $z_\ell$, fix a hemispherical coordinate chart $\Phi_{z_\ell} : \kappa(z_\ell,\pi/2) \rightarrow B_{d-1}(\pi/2)$. 
The map $x\mapsto \Phi_{z_\ell}(x/|x|)$ is real analytic on $\cA_{R,\ell}$. It is also homogeneous of degree zero. Therefore, by the rescaling argument above, the map is in $\cG^1(\cA_{R,\ell}: C,CR^{-1})$.

According to condition (A3), $\eta_{j,\kappa} \in \cG^s(\mathbb{S}^{d-1} :B_0,B_1\delta_j^{-1})$. By definition, Gevrey classes on a manifold are defined through local coordinates. Thus, $\eta_{j,\kappa}\circ\Phi_{z_\ell}^{-1} \in \cG^s(B_{d-1}(\pi/2) : B_0,B_1\delta_j^{-1})$ for each $\ell$. By the composition law of Gevrey classes (see Lemma \ref{lemma:Gevrey:composition}), the function $x\mapsto \eta_{j,\kappa}(x/|x|)$ is in $\cG^s(\cA_{R,\ell}: C,C R^{-1}(1+\delta_j^{-1}))$. The constants can be increased to $(C,C2^{-j})$ since $\delta_j  =\pi 2^j/R$  and $2^{j} \leq R$. Because the $\cA_{R,\ell}$ cover $\cA_R$, we deduce that the map $x\to \eta_{j,\kappa}(x/|x|)$ is in $\cG^s(\cA_{R}:C,C2^{-j})$.

On the annulus $\cA_R$, the function $h_{j,\kappa}$ is the product of the functions $x \mapsto \phi_j(|x|)$ and $x \mapsto \eta_{j,\kappa}(x/|x|)$. Thus, by the multiplication law of Gevrey classes (see Lemma \ref{lemma:Gevrey:multiplication}), $h$ is in $\cG^s(\cA_R : A,B 2^{-j})$,  where $A,B$ depend only on $d$,$s$ and the Gevrey constants $A_0,A_1,B_0,B_1$ for the radial and angular cutoffs. Since $h_{j,\kappa}$ is zero outside $\cA_R$, we can extend the same estimate to all of $\R^d$. Therefore, 
$$
|\partial_x^\alpha h_{j,\kappa}(x)|
\leq
A B^{|\alpha|}2^{-j|\alpha|}(\alpha!)^s,
\qquad x\in\R^d .
$$
It remains to consider the top scale $j=j_{\max}$. In this case the cover $\cK_j$ of the sphere is trivial, and so
$
\eta_{j_{\max},\mathbb{S}^{d-1}}\equiv 1$, 
and hence
$$
h_{j_{\max},\mathbb{S}^{d-1}}(x)=
\phi_{j_{\max}}(|x|).
$$
By the construction of the radial partition, we have 
$$
\phi_{j_{\max}}(r)=1
\qquad \text{for }0\leq r\leq 0.45R.
$$
Thus $h_{j_{\max},\mathbb{S}^{d-1}}$ is smooth at the origin. Also, the
derivatives of $\phi_{j_{\max}}(|x|)$ may be nonzero only when
$x$ is in the annulus $\cA_R$. On this annulus, we showed that the map $x\mapsto |x|$ is in $\cG^1(\cA_R:CR,C R^{-1})$. Since $\phi_{j_{\max}}$ is in 
$\cG^s([0,\infty):C,C2^{-j_{\max}})$ and $2^{j_{\max}} = R$,  the composition law implies that $x \mapsto \phi_{j_{\max}}(|x|)$ is in $\cG^s(\cA_R:A,B 2^{-j_{\max}})$ where the constants $A,B$ depend only on $d$ and $s$.

Combining the two cases completes the proof of the lemma.
\end{proof}

Next, we show that the Fourier transform of an interior wave packet $\psi_{j,\kappa, {\bf m}}$ is tightly concentrated about the frequency vector $c_\ast2^{-j} {\bf m}$. In other words, its Fourier transform looks like a bump function around  $c_\ast2^{-j} {\bf m}$.  We prove this in the next lemma. 

\begin{lemma}[Localization of interior wave packets]\label{fourier-decay}
Assume that $0 \leq j \leq j_{\max}$, $\kappa \in \cK_j$, and ${\bf m} \in \mathbb{Z}^d$. Then \[
|\widehat{\psi_{j,\kappa,{\bf m}}}(\xi)|
\le
C\,2^{jd/2}\exp\Bigl(-c\,|2^j\xi-c_\ast{\bf m}|^{1/s}\Bigr),
\qquad \xi \in \mathbb{R}^d, 
\] where  $C,c>0$ are constants depending only on $d$ and $s$.
\end{lemma}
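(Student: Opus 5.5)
The plan is to reduce the estimate to Lemma~\ref{lemma:Gevrey:Fourier_decay}, applied to the amplitude $h_{j,\kappa}$ of \eqref{eq:def-hjk} after rescaling to unit scale. By \eqref{int_wp} we can write $\psi_{j,\kappa,\mathbf{m}}(x) = C_{j,\kappa}2^{-jd/2} h_{j,\kappa}(x) e^{ic_\ast 2^{-j}\mathbf{m}\cdot x}$. Since a modulation by $e^{i\zeta\cdot x}$ translates the Fourier transform by $\zeta$, we get
\[
\widehat{\psi_{j,\kappa,\mathbf{m}}}(\xi) = C_{j,\kappa}2^{-jd/2}\,\widehat{h_{j,\kappa}}\bigl(\xi - c_\ast 2^{-j}\mathbf{m}\bigr).
\]
Because $C_{j,\kappa}\le C_d$, it therefore suffices to show
\[
|\widehat{h_{j,\kappa}}(\zeta)| \lesssim 2^{jd}\exp\bigl(-c|2^j\zeta|^{1/s}\bigr).
\]

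To prove this, rescale by setting $g(y) := h_{j,\kappa}(2^j y)$. Lemma~\ref{derivative--bounds--ball} gives $h_{j,\kappa}\in\cG^s(\R^d:A,B2^{-j})$, and the chain rule removes the factor $2^{-j|\alpha|}$, so $g\in\cG^s(\R^d:A,B)$ with $A,B$ depending only on $d,s$. The support of $h_{j,\kappa}$ lies in $S_{j,\kappa}^*$, whose measure is $\approx 2^{jd}$ by \eqref{eqn:sect_meas}; this is the case $0\le j\le j_{\max}$, and at $j=j_{\max}$ the support is the ball $B_d(0.55R)$, of measure $\approx R^d=2^{j_{\max}d}$. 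Hence $|\supp g|\lesssim 1$. Lemma~\ref{lemma:Gevrey:Fourier_decay} then yields
\[
|\widehat g(\xi)|\lesssim \exp\bigl(-c(|\xi|/B)^{1/s}\bigr).
\]
Finally, undo the dilation using $\widehat{h_{j,\kappa}}(\zeta) = 2^{jd}\,\widehat g(2^j\zeta)$. Combining this with the modulation identity gives
\[
|\widehat{\psi_{j,\kappa,\mathbf{m}}}(\xi)|\lesssim 2^{jd/2}\exp\bigl(-c'|2^j\xi - c_\ast\mathbf{m}|^{1/s}\bigr),
\qquad c' = cB^{-1/s}.
\]

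I expect no real obstacle, since the Gevrey regularity was already established in Lemma~\ref{derivative--bounds--ball}. The one point needing care is that the constants stay uniform in $j$, $\kappa$ and $R$. This requires that the Gevrey constants of $g$ be scale-free, which the rescaling guarantees, and that $|\supp g|$ be bounded by a dimensional constant even at the top scale, where the support is a full ball rather than a sector.
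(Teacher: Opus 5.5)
Your proposal is correct and follows essentially the same route as the paper: the modulation identity reduces the claim to a bound on $\widehat{h_{j,\kappa}}$, and you obtain that bound from Lemma~\ref{derivative--bounds--ball} combined with Lemma~\ref{lemma:Gevrey:Fourier_decay}. The only difference is your explicit dilation to unit scale. The paper skips it and applies Lemma~\ref{lemma:Gevrey:Fourier_decay} directly to $h_{j,\kappa}\in\cG^s(\R^d:A,B2^{-j})$ with support measure $H\approx 2^{jd}$, which is equivalent.
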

\begin{proof}
Let $h_{j,\kappa}$ be defined by \eqref{eq:def-hjk}, so that
\[
\psi_{j,\kappa,{\bf m}}(x)
=
C_{j,\kappa}2^{-jd/2}h_{j,\kappa}(x)e^{ic_\ast2^{-j}{\bf m}\cdot x}.
\]
By Lemma \ref{derivative--bounds--ball}, $h_{j,\kappa}$ is in $\cG^s(\R^d: A,B 2^{-j})$, where $A, B > 0$ depend only on $d$ and $s$. Observe that $h_{j,\kappa}$ is supported on the sector $S_{j,\kappa}^*$ of Lebesgue measure $\approx  2^{jd}$. By Lemma \ref{lemma:Gevrey:Fourier_decay}, there exist constants $C_1,c_1>0$ depending only on $d$ and $s$ such that
\[
|\widehat{h_{j,\kappa}}(\xi)|
\le
C_1 2^{jd}\exp\Bigl(-c_1|2^j\xi|^{1/s}\Bigr),
\qquad \xi\in\R^d.
\]

Next, using the modulation rule for the Fourier transform,
\begin{align}\label{FT-psi}
\widehat{\psi_{j,\kappa,{\bf m}}}(\xi)
=
C_{j,\kappa}2^{-jd/2}\widehat{h_{j,\kappa}}\bigl(\xi-c_\ast2^{-j}{\bf m}\bigr).
\end{align}
Therefore,
\begin{align*}
|\widehat{\psi_{j,\kappa,{\bf m}}}(\xi)|
&\le
|C_{j,\kappa}|2^{-jd/2}
C_1 2^{jd}
\exp\Bigl(-c_1\bigl|2^j(\xi-c_\ast2^{-j}{\bf m})\bigr|^{1/s}\Bigr) \\
&\leq
C_2 2^{jd/2}\exp\Bigl(-c_1|2^j\xi-c_\ast{\bf m}|^{1/s}\Bigr),
\end{align*}
where we use that $C_{j,\kappa}$ is uniformly bounded by a constant depending on $d$ and $s$.
This proves the lemma.
\end{proof}

\subsubsection{Interior decomposition}

We fix a scale parameter $L > 2\sqrt{d}$ and define  
\[
\cI^{\mathrm{int}}
=\{(j,\kappa,{\bf m}): 0\leq j\leq j_{\max},\ \kappa\in \cK_j,\ {\bf m}\in\mathbb Z^d\} =\cI^{\mathrm{int}}_1\cup \cI^{\mathrm{int}}_2\cup \cI^{\mathrm{int}}_3, 
\]
where 
\begin{align}\notag
\cI^{\mathrm{int}}_1
:=&
\Bigl\{(j,\kappa,{\bf m}):
0\leq j\leq j_{\max},\ \kappa\in \cK_j, \
\dist(c_\ast2^{-j}{\bf m},S)\geq L 2^{-j}
\Bigr\},\\\label{bins}
\cI^{\mathrm{int}}_2
:=&
\Bigl\{(j,\kappa,{\bf m}):
0\leq j\leq j_{\max},\ \kappa\in \cK_j,\ {\bf m}\in\mathbb Z^d,\ 
\dist(c_\ast2^{-j}{\bf m},\R^d\setminus S)\geq L 2^{-j}
\Bigr\},
\\\notag
\cI^{\mathrm{int}}_3
:=&
\Bigl\{(j,\kappa,{\bf m}):
0\leq j\leq j_{\max}, \kappa\in \cK_j, {\bf m}\in\mathbb Z^d, \dist(c_\ast2^{-j}{\bf m},\partial S) \leq L 2^{-j}
\Bigr\}.
\end{align}

We control the cardinality of $\cI_3^{\mathrm{int}}$ in the following lemma.

\begin{lemma}\label{lem:cardinality-residula-int}
For $0 < \eta < 1$,
\[
\#(\cI^{\mathrm{int}}_3) \lesssim_{\eta,d} \cM^{d-\eta}(\partial S) \biggl( R^{d-1}L^{d} + R^{d-\eta}L^{\eta} \biggr),
\] 
while for $\eta = 1$:
\[
\#(\cI^{\mathrm{int}}_3) \lesssim_{d} \cM^{d-1}(\partial S) \biggl( R^{d-1}L^{d} + R^{d-1} \log(R) L \biggr).
\] 
\end{lemma}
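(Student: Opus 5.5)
The count factorizes by scale: for each $0\leq j\leq j_{\max}$, the number of caps is $\#\cK_j\approx \delta_j^{1-d}\approx (R2^{-j})^{d-1}$. The condition $\dist(c_\ast 2^{-j}\mathbf m,\partial S)\leq L2^{-j}$ does not depend on $\kappa$. Hence
\[
\#(\cI_3^{\mathrm{int}})\lesssim_d \sum_{j=0}^{j_{\max}} (R2^{-j})^{d-1} N_j,
\qquad
N_j:=\#\{\mathbf m\in\Z^d:\dist(\mathbf m, c_\ast^{-1}2^j\partial S)\leq c_\ast^{-1}L\}.
\]
To bound $N_j$, apply Lemma \ref{lem:lattice_near_bd_min} to the closed set $E=c_\ast^{-1}2^j\partial S$ with $t=c_\ast^{-1}L$. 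Since $L>2\sqrt d$ and $c_\ast<1$, we have $t+\sqrt d/2\leq 2c_\ast^{-1}L$. Rescaling back gives
\[
N_j\leq |E_{2c_\ast^{-1}L}| = (c_\ast^{-1}2^j)^d\,|(\partial S)_{2L2^{-j}}|.
\]

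Set $r=2L2^{-j}$. If $r\leq\diam(\partial S)$, the definition of $\cM^{d-\eta}$ gives $|(\partial S)_r|\leq \cM^{d-\eta}(\partial S)\,r^\eta$. Here $\diam(\partial S)=\diam(S)\geq 1/2$ for bounded $S$, which needs a one-line check. If $r>\diam(\partial S)$, then $(\partial S)_r$ lies in a ball of radius $\lesssim r$, so $|(\partial S)_r|\lesssim r^d$. We need this in terms of the content. Taking $\rho=\diam(\partial S)\in[1/2,1]$ in the definition gives $\cM^{d-\eta}(\partial S)\geq |(\partial S)_\rho|/\rho^\eta\gtrsim_d 1$, because $(\partial S)_\rho$ contains a ball of radius $\rho\geq 1/2$. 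So in all cases
\[
|(\partial S)_r|\lesssim_d \cM^{d-\eta}(\partial S)\,(r^\eta+r^d).
\]
This yields
\[
N_j\lesssim \cM^{d-\eta}(\partial S)\bigl(2^{j(d-\eta)}L^\eta + L^d\bigr),
\]
with constants depending on $d$ only, since $c_\ast$ is dimensional.

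Summing over scales:
\[
\#(\cI_3^{\mathrm{int}})\lesssim \cM^{d-\eta}(\partial S)\sum_{j=0}^{j_{\max}}\Bigl(R^{d-1}2^{-j(d-1)}L^d + R^{d-1}2^{j(1-\eta)}L^\eta\Bigr).
\]
The first series is geometric and decreasing because $d\geq 2$, so it contributes $\lesssim R^{d-1}L^d$. In the second series the $j$-th term is $R^{d-1}2^{j(1-\eta)}L^\eta$.
\begin{itemize}
\item For $\eta<1$ the series is geometric and increasing, so it is dominated by its last term $j=j_{\max}$. Since $2^{j_{\max}}=R$, it is $\lesssim_\eta R^{d-1}R^{1-\eta}L^\eta=R^{d-\eta}L^\eta$. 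The constant is $(1-2^{\eta-1})^{-1}$, which is where the $\eta$-dependence enters.
\item For $\eta=1$ every term equals $R^{d-1}L$, and there are $j_{\max}+1\lesssim\log R$ of them, giving $R^{d-1}\log(R)\,L$.
\end{itemize}

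The main point requiring care is the uniform bound $|(\partial S)_r|\lesssim\cM^{d-\eta}(\partial S)(r^\eta+r^d)$ at large $r$, which holds only because $\diam(S)\geq 1/2$ forces the content to be bounded below. The rest is bookkeeping of the cap counts $\#\cK_j\approx (R2^{-j})^{d-1}$, including the top scale, where there is a single cap.
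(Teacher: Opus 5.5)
Your proof is correct and follows the paper's argument essentially step for step. You bound the per-scale lattice count via Lemma \ref{lem:lattice_near_bd_min} at radius $2c_\ast^{-1}L$, prove $|(\partial S)_r|\lesssim_d \cM^{d-\eta}(\partial S)(r^\eta+r^d)$ using $\cM^{d-\eta}(\partial S)\gtrsim_d 1$ (which follows from $\diam(S)\geq 1/2$), and then sum $\#\cK_j\lesssim (R2^{-j})^{d-1}$ times the count over $j$. The only differences are cosmetic: you rescale back to $\partial S$ instead of transferring the Minkowski bound to $c_\ast^{-1}2^j\partial S$, and you state explicitly the fact $\diam(\partial S)=\diam(S)$, which the paper uses tacitly.
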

\begin{remark}
The estimates used in Lemma~\ref{lem:cardinality-residula-int} are deduced by comparing the number of integer points in the \(cL\)-neighborhood \((\partial S)_{cL}\) of \(\partial{S}\), where \(c\) is some fixed constant and \(L\) is large, to the volume of the neighborhood. For this comparison, we are in a regime where the number of integer points in the neighborhood is genuinely of the size of the volume of the neighborhood.  
Consequently, regardless of how smooth \(\partial{S}\) is, we cannot improve Lemma~\ref{lem:cardinality-residula-int} by using better estimates for the discrepancy (the difference between the volume and the number of integer points inside the neighborhood) such as in \cite{Lettington}.
\end{remark}
\begin{proof}
For $0\leq j\leq j_{\max}$, let
\begin{align*}
M_j &:= \#\Bigl\{{\bf m}\in \mathbb Z^d : \dist(c_\ast2^{-j}{\bf m},\partial S)\leq L 2^{-j} \Bigr\} \\
& \;= \#\Bigl\{{\bf m}\in \mathbb Z^d : \dist({\bf m},c_\ast^{-1}2^j\partial S)\leq L c_\ast^{-1} \Bigr\}.
\end{align*}
Recall $\partial S$ has finite $(d-\eta)$-upper Minkowski content and $1/2 \leq \diam(S) \leq 1$, which implies
\[
| (\partial S)_t | \leq \cM^{d-\eta}(\partial S) t^\eta, \qquad 0 < t \leq 1/2.
\]
Set \(a_j=c_\ast^{-1}2^j\). By scaling the previous estimate,
\[
\left|( \partial (a_j S))_r \right| \leq \cM^{d-\eta}(\partial S) a_j^{d-\eta}r^\eta, \qquad 0<r\leq a_j/2.
\]
(Here, we have used that $(\partial(a_j S))_r = (a_j \partial S)_r = a_j (\partial S)_{r/a_j}$.) For  \(r > a_j\), the estimate $\diam(S) \leq 1$ implies $\diam(\partial( a_j S)) \leq 2 a_j$, which gives the
trivial estimate
\[
\left|( \partial (a_j S))_r \right| \leq C_d r^d, \qquad r > a_j/2.
\]
By setting $r=1/2$ in the definition of Minkowski content and using that $\diam(S) \geq 1/2$, we deduce that 
\[
\cM^{d-\eta}(\partial S) \geq |(\partial S)_{1/2}|/(1/2)^\eta \geq c_d > 0.
\]
Taken together, these estimates imply the volume bound
\[
\left|( \partial (a_j S))_r \right| \leq C_d \cM^{d-\eta}(\partial S) ( a_j^{d-\eta}  r^\eta + r^d ) , \qquad r > 0.
\]
According to \eqref{eqn:vol_bd},
\[
M_j = \# \{ \mathbf{m} \in \Z^d : \dist(\mathbf{m}, \partial(a_j S)) \leq L c_\ast^{-1} \} \leq | (\partial (a_j S))_{L c_\ast^{-1} + \sqrt{d}}| \leq | (\partial (a_j S))_{2L c_\ast^{-1}}|,
\]
where the second inequality follows from $L c_\ast^{-1} > \sqrt{d}$. So, using the volume bound with $r=2L c_\ast^{-1}$,
\[
M_j
\lesssim_{d} \cM^{d-\eta}(\partial S)
(2^{j(d-\eta)}  L^{\eta} + L^{d}).
\]
Thus,
\[
\#(\cI^{\mathrm{int}}_3)
=
\sum_{j=0}^{j_{\max}} \sum_{\kappa\in \cK_j}M_j
=
\sum_{j=0}^{j_{\max}} \#\cK_j\,M_j.
\]
Using the bounds for $\#\cK_j$ and $M_j$, 
\[
\begin{aligned}
\#(\cI^{\mathrm{int}}_3)
&\lesssim_{d} \cM^{d-\eta}(\partial S)
\sum_{j=0}^{j_{\max}}
\left(\frac{R}{2^j}\right)^{d-1}
( 2^{j(d-\eta)}L^{\eta} + L^{d}) \\
&\lesssim_d  
\cM^{d-\eta}(\partial S) \biggl( R^{d-1}L^{\eta}
\sum_{j=0}^{j_{\max}}2^{j(1-\eta)} + R^{d-1}L^{d} \biggr).
\end{aligned}
\]
If \(0<\eta<1\), the sum above is \(O_\eta(R^{1-\eta})\), while for
\(\eta=1\) it is \(O(\log R)\). The estimate stated in the lemma follows immediately.
\end{proof}

\begin{lemma}[Energy estimates I] \label{lem:interior-estimate}
There exist constants $c,C > 0$ determined by $d$ and $s$ such that 
\begin{equation}\label{eqn:energy}
\sum_{\nu\in \cI^{\mathrm{int}}_1}\|\widehat{\psi_\nu}\|_{L^2(S)}^2
+ \sum_{\nu\in \cI^{\mathrm{int}}_2}\|\widehat{\psi_\nu}\|_{L^2(\R^d\setminus S)}^2 \leq C R^d \exp(-cL^{1/s}).
\end{equation}
\end{lemma}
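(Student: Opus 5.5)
The plan is to bound each term pointwise using Lemma~\ref{fourier-decay}, integrate the resulting tail over the relevant region, and then sum over $\mathbf{m}$, $\kappa$ and $j$.

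\emph{First term.} Take $\nu=(j,\kappa,\mathbf{m})\in\cI^{\mathrm{int}}_1$. For every $\xi\in S$ we have $|\xi-c_\ast 2^{-j}\mathbf{m}|\geq L2^{-j}$. Hence
\[
|2^j\xi-c_\ast\mathbf{m}|\geq L .
\]
Squaring the bound of Lemma~\ref{fourier-decay} gives
\[
\|\widehat{\psi_\nu}\|_{L^2(S)}^2\lesssim 2^{jd}\int_{S}\exp\bigl(-2c|2^j\xi-c_\ast\mathbf{m}|^{1/s}\bigr)\,d\xi .
\]
On the region of integration the exponent is at least $L$. I split it as
\[
\exp(-2c t^{1/s})\leq \exp(-c L^{1/s})\exp(-c t^{1/s}),\qquad t\geq L,
\]
with $t=|2^j\xi-c_\ast\mathbf{m}|$. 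This pulls out a factor $\exp(-cL^{1/s})$ at once, and the remaining factor $\exp(-c|2^j\xi-c_\ast\mathbf{m}|^{1/s})$ is still available for summation.

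The next step is to sum over $\mathbf{m}\in\Z^d$ before integrating in $\xi$. For fixed $\xi$, the sum $\sum_{\mathbf{m}}\exp(-c|2^j\xi-c_\ast\mathbf{m}|^{1/s})$ is bounded by a constant depending on $d$, $s$ and $c_\ast$. This holds because the points $c_\ast\mathbf{m}$ form a lattice, so the sum is uniformly bounded in the shift $2^j\xi$. Therefore
\[
\sum_{\mathbf{m}}\|\widehat{\psi_{j,\kappa,\mathbf{m}}}\|_{L^2(S)}^2\mathbf 1_{\nu\in\cI_1^{\mathrm{int}}}\lesssim 2^{jd}|S|\exp(-cL^{1/s})\lesssim 2^{jd}\exp(-cL^{1/s}),
\]
using $S\subset B_d(1)$.

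Summing over $\kappa\in\cK_j$ multiplies this by $\#\cK_j\approx (R/2^j)^{d-1}$, giving $R^{d-1}2^{j}\exp(-cL^{1/s})$. Summing the geometric series over $0\leq j\leq j_{\max}$, with $2^{j_{\max}}=R$, gives $R^{d}\exp(-cL^{1/s})$.

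\emph{Second term.} This is the main obstacle, because $\R^d\setminus S$ is unbounded and so the factor $|S|$ cannot be used. Instead I reverse the order of summation. For $\nu\in\cI_2^{\mathrm{int}}$ we have $c_\ast2^{-j}\mathbf{m}\in S$, so $|c_\ast 2^{-j}\mathbf{m}|\leq 1$. The number of such $\mathbf{m}$ is therefore at most $\lesssim 2^{jd}+1\lesssim 2^{jd}$.

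For each such $\mathbf{m}$, every $\xi\notin S$ satisfies $|2^j\xi-c_\ast\mathbf{m}|\geq L$. Substituting $u=2^j\xi-c_\ast\mathbf{m}$, Lemma~\ref{fourier-decay} yields
\[
\|\widehat{\psi_\nu}\|_{L^2(\R^d\setminus S)}^2\lesssim 2^{jd}\int_{|u|\geq L}\exp(-2c|u|^{1/s})\,2^{-jd}\,du\lesssim \exp(-cL^{1/s}).
\]
The constant absorbs the polynomial factor $L^{ds}$ at the cost of shrinking $c$.

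Multiplying by the count of $\mathbf{m}$ gives $2^{jd}\exp(-cL^{1/s})$ for each $(j,\kappa)$. The sum over $\kappa$ and $j$ is then identical to the one in the first case, which gives $R^d\exp(-cL^{1/s})$ and completes the bound \eqref{eqn:energy}. All constants depend only on $d$ and $s$, with $c_\ast$ being dimensional.
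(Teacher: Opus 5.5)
Your proof is correct and follows the paper's strategy: the pointwise decay from Lemma~\ref{fourier-decay}, a lattice sum over $\mathbf m$ bounded uniformly in $\xi$, the factor $|S|\lesssim 1$, and $\#\cK_j\lesssim (R/2^j)^{d-1}$ summed over $j$. The first term differs only slightly: you split the exponent to pull out $\exp(-cL^{1/s})$ and then bound the full lattice sum, while the paper averages over the cubes $Q_j(\mathbf m)$ to turn the sum over $H_j$ directly into a tail integral. For the second term, your argument is exactly the detail the paper leaves to the reader: the centers $c_\ast 2^{-j}\mathbf m$ lie in $S\subset B_d(1)$, so there are $\lesssim 2^{jd}$ of them, and each contributes a rescaled tail $\lesssim \exp(-cL^{1/s})$.
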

\begin{proof}
For each $0\leq j\leq j_{\max}$,  let
\[
H_j:=\Bigl\{{\bf m}\in\mathbb Z^d:\dist(c_\ast2^{-j}{\bf m},S)\geq L 2^{-j}\Bigr\},
\]
so that
\[
(j,\kappa,{\bf m})\in\mathcal I^{\mathrm{int}}_1
\quad\Longleftrightarrow\quad
0\leq j\leq j_{\max},\ \kappa\in \cK_j,\ {\bf m}\in H_j.
\]
Fix $\nu=(j,\kappa,{\bf m})\in\mathcal I^{\mathrm{int}}_1$.
By Lemma \ref{fourier-decay},
\[
|\widehat{\psi_{j,\kappa,{\bf m}}}(\xi)|^2
\leq C 2^{jd}\exp\Big(-c|2^j\xi-c_\ast{\bf m}|^{1/s}\Big),
\qquad \xi\in\R^d,
\]
for some constants $C, c > 0$ determined by $s$ and $d$. Therefore, 
\begin{align}\label{eqn:energy1}
\sum_{\nu\in \cI^{\mathrm{int}}_1} \|\widehat{\psi_\nu}\|_{L^2(S)}^2 &= \sum_{j=0}^{j_{\max}}\sum_{\kappa\in \cK_j}\sum_{{\bf m}\in H_j}\int_S |\widehat{\psi_{j,\kappa,{\bf m}}}(\xi)|^2 d\xi \\
& \leq C \sum_{j=0}^{j_{\max}}\sum_{\kappa\in \cK_j}2^{jd}\int_S
\sum_{{\bf m}\in H_j}\exp\Bigl(-c|2^j\xi-c_\ast{\bf m}|^{1/s}\Bigr)d\xi. \nonumber
\end{align}
Consider the partition $\{Q_j({\bf m})\}_{{\bf m}\in\mathbb Z^d}$ of $\R^d$ by pairwise disjoint cubes
\[
Q_j({\bf m}) := c_\ast2^{-j}{\bf m}+(0,c_\ast2^{-j}]^d.
\]
For $\xi \in S$, $\mathbf{m} \in H_j$, and $\xi' \in Q_j(\mathbf{m})$, we claim that
\[
\exp\Bigl(-c|2^j\xi-c_\ast{\bf m}|^{1/s}\Bigr)
\le
C\exp\Bigl(-c|2^j\xi-2^j\xi'|^{1/s}\Bigr).
\]
To see this, simply apply the triangle inequality and the concavity of the function $t \mapsto t^{1/s}$ to obtain
\[
|2^j\xi-c_\ast{\bf m}|^{1/s}
\ge
|2^j\xi-2^j\xi'|^{1/s}-|2^j\xi'-c_\ast{\bf m}|^{1/s}
\ge
|2^j\xi-2^j\xi'|^{1/s}-C_0. 
\]
Hence, for fixed $\xi\in S$, by averaging the previous estimate over $\xi' \in Q_j(\mathbf{m})$,
\begin{align}\label{eqn:energy2}
\sum_{{\bf m}\in H_j}\exp\Bigl(-c|2^j\xi-c_\ast{\bf m}|^{1/s}\Bigr)
&\leq C\sum_{{\bf m}\in H_j}\frac{1}{|Q_j({\bf m})|}
\int_{Q_j({\bf m})}\exp\Bigl(-c|2^j(\xi-\xi')|^{1/s}\Bigr)\,d\xi' \\
&\leq C(c_\ast2^{-j})^{-d}\int_{|\xi-\xi'|\geq (L/2)2^{-j}}
\exp\Bigl(-c|2^j(\xi-\xi')|^{1/s}\Bigr)\,d\xi' \nonumber \\
&  =  C c_\ast^{-d}\int_{|\eta|\geq L/2}\exp\Bigl(-2c|\eta|^{1/s}\Bigr)\,d\eta.  \nonumber
\end{align}
In the second line above, we use that for fixed $\xi\in S$ and ${\bf m}\in H_j$ the cube $Q_j({\bf m})$ is contained in $\{\xi'\in\R^d:|\xi-\xi'|\geq (L/2)2^{-j}\}$. Indeed, $\xi' \in Q_j(\mathbf{m}) \implies |\xi' - c_* 2^{-j} \mathbf{m}| \leq c_* \sqrt{d} 2^{-j} \leq \sqrt{d} 2^{-j}$ while $\xi \in S$ and $\mathbf{m} \in H_j \implies |\xi - c_* 2^{-j} \mathbf{m}| \geq L 2^{-j}$. Thus, by the triangle inequality, $|\xi - \xi'| \geq (L - \sqrt{d}) 2^{-j} \geq (L/2) 2^{-j}$.
In the last line above, we use the change of variables $\eta=2^j(\xi-\xi')$. The tail integral satisfies
\begin{equation}\label{eqn:energy3}
\int_{|\eta|\geq L/2}\exp\Bigl(-2c|\eta|^{1/s}\Bigr)\,d\eta
\le
C'\exp\bigl(-c' L^{1/s}\bigr),
\end{equation}
for constants $C',c'>0$ depending only on $d,s$. 

Plugging the lattice sum estimate \eqref{eqn:energy2} and the tail integral bound \eqref{eqn:energy3} into \eqref{eqn:energy1} gives
\[
\sum_{\nu\in \mathcal I^{\mathrm{int}}_1}\|\widehat{\psi_\nu}\|_{L^2(S)}^2
\le
C\sum_{j=0}^{j_{\max}}\sum_{\kappa\in \cK_j}2^{jd}|S| \exp(-c L^{1/s})
\]
Using $|S|\lesssim 1$ (due to the normalization $\diam(S) \leq 1$) and $\#\cK_j\lesssim (R/2^j)^{d-1}$, we obtain
\[
\sum_{\nu\in \mathcal I^{\mathrm{int}}_1}\|\widehat{\psi_\nu}\|_{L^2(S)}^2 \lesssim \sum_{j=0}^{j_{\max}}\Big(\frac{R}{2^j}\Big)^{d-1}2^{jd} \exp(-c L^{1/s}) \lesssim R^{d-1} 2^{j_{\max}} \exp(-c L^{1/s}) = R^d \exp(-c L^{1/s}).
\]
The estimate for the sum over \(\cI_2^{\mathrm{int}}\) is analogous, but one integrates the tail of each packet over
\(\R^d\setminus S\) rather than using the measure of this set. Since the relevant frequency
centers lie in a bounded enlargement of \(S\), the resulting lattice sum is finite and can be bounded by the same approach as above. (The corresponding estimate in the case $d=2$ is treated in \cite{HIM-disk}. See Lemma 4.5 therein.) 
\end{proof}
\subsection{Analysis of boundary wave packets} 
In this subsection, we establish Fourier localization estimates for the boundary wave packets, which are defined in polar coordinates by
\[
\psi_{j,\kappa,{\bf m}}(r,\omega) = C_{j,\kappa} 2^{-j/2} \phi_j(r) \eta_{j,\kappa}(\omega) \exp( \pi i(m_1 2^{-j}r +  {\bf m}' \cdot \Phi_{\kappa^*}(\omega)R/\pi)), \qquad j < 0,
\]
with ${\bf m} = (m_1, {\bf m}') \in \mathbb{Z} \times \mathbb{Z}^{d-1}$ and $\kappa \in \cK_j$;  see \eqref{bdry_wp}.

We now establish the near-exponential decay of the boundary wave packets.

\begin{lemma}[Localization of boundary wave packets]\label{bdrdy-Fourier-decay} 
There exist constants $C,c >0$ determined by $d,s$,  such that for any  $j<0$, $\kappa\in \cK_j$, and ${\bf m}=(m_1, {\bf m}')\in \mathbb Z\times \mathbb Z^{d-1}$, 
\[
|\widehat{\psi_{j,\kappa,{\bf m}}}(\xi)| \leq C 2^{j/2} \exp(-c|m_1|^{1/s}) \exp(-c| {\bf m}'|^{1/s}), \qquad |\xi|\leq 1.
\]
\end{lemma}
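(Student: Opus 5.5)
We will write the Fourier transform in the flattened coordinates attached to the boundary sector, then integrate by parts separately in the radial and tangential variables, using the Gevrey bounds to trade derivatives for powers of $|m_1|^{-1}$ and $|\mathbf{m}'|^{-1}$.

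\textbf{Change of variables.} Write $x=r\Phi_{\kappa^*}^{-1}(y)$ with $r\in I_j$ and $y\in B_{d-1}(\pi/R)$, and rescale $y=(\pi/R)u$, $r=R-2^{j}t$. Then $t$ and $u$ range over fixed bounded sets; note $2^{j}\le 1/2$ since $j<0$. The Jacobian is $r^{d-1}J_{\kappa^*}(y)\cdot 2^j(\pi/R)^{d-1}\approx 2^j$. The result is
\[
\widehat{\psi_{j,\kappa,\mathbf m}}(\xi)=c\,C_{j,\kappa}2^{-j/2}\,2^{j}\int\!\!\int G_\xi(t,u)\,e^{-\pi i m_1 t}\,e^{\pi i\,\mathbf m'\cdot u}\,dt\,du ,
\]
up to a unimodular constant coming from $e^{\pi i m_1 2^{-j}R}$. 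The amplitude is
\[
G_\xi(t,u)=\phi_j(R-2^jt)\,\eta_{j,\kappa}\bigl(\Phi_{\kappa^*}^{-1}(\pi u/R)\bigr)\,\frac{r^{d-1}}{R^{d-1}}\,J_{\kappa^*}(\pi u/R)\,e^{-i(R-2^jt)\Phi_{\kappa^*}^{-1}(\pi u/R)\cdot\xi}.
\]
It suffices to show that $G_\xi$ is compactly supported in a fixed box and lies in $\cG^s$ with constants $(A,B)$ independent of $j$, $\kappa$, $R$ and of $|\xi|\le 1$. Granting this, Lemma \ref{lemma:Gevrey:Fourier_decay}, applied in the product variable $(t,u)\in\R^d$ at frequency $(\pi m_1,-\pi\mathbf m')$, gives decay $\exp(-c|(m_1,\mathbf m')|^{1/s})\le \exp(-c'|m_1|^{1/s})\exp(-c'|\mathbf m'|^{1/s})$. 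The prefactor is $2^{-j/2}2^j=2^{j/2}$, as claimed.

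\textbf{Gevrey bounds factor by factor.} We then combine the factors with Lemmas \ref{lemma:Gevrey:multiplication} and \ref{lemma:Gevrey:composition}.
\begin{itemize}
\item $t\mapsto\phi_j(R-2^jt)$ has constants $(C_1,C_2)$ by (R3): each derivative costs $2^{j}\cdot 2^{-j}$.
\item $u\mapsto \eta_{j,\kappa}\circ\Phi^{-1}_{\kappa^*}(\pi u/R)$ has constants $(C_1,C C_2)$ by (A3), since each derivative costs $(\pi/R)\delta_j^{-1}=2$. A caveat: (A3) is phrased in the hemispherical chart centered at an arbitrary $z$, and $\Phi_{\kappa^*}$ is the restriction of $\Phi_{\omega_\kappa}$, so we use $z=\omega_\kappa$.
\item $J_{\kappa^*}(\pi u/R)$ and $(r/R)^{d-1}=(1-2^jt/R)^{d-1}$ are analytic with bounded constants, by Lemma \ref{lem:cap-chart-jacobian}. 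The scaling by $\pi/R\le\pi/2$ only improves the constants.
\end{itemize}

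\textbf{The phase factor; main obstacle.} The remaining factor is $e^{-i(R-2^jt)\Phi^{-1}(\pi u/R)\cdot\xi}$. Its exponent is large: $R|\xi|$ is of size up to $R$. However, only the derivatives of the exponent matter, and its value contributes a unimodular constant that can be pulled out only for the constant part. Write the exponent as $R\,\omega_\kappa\cdot\xi+E(t,u)$. The constant part is harmless. The remainder $E(t,u)=-(R-2^jt)\bigl(\Phi^{-1}(\pi u/R)-\omega_\kappa\bigr)\cdot\xi+2^jt\,\omega_\kappa\cdot\xi$ satisfies the following:
\begin{itemize}
\item $t$-derivatives produce a factor $2^j|\xi|\le1$;
\item each $u$-derivative of $\Phi^{-1}(\pi u/R)$ produces a factor $\pi/R$, which cancels the prefactor $R$;
\item $|\Phi^{-1}(\pi u/R)-\omega_\kappa|\lesssim 1/R$.
\end{itemize}
Hence $E\in\cG^1(A,B)$ on the bounded $(t,u)$-box, with bounded constants, by Lemma \ref{lem:cap-chart-jacobian}. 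Composing with the entire function $e^{i\cdot}$ then requires only a Gevrey bound for $e^{iz}$ on a bounded range of $z$, which holds because $E$ is bounded. This step is where care is needed: the naive composition with the full phase would give constants growing like $R$.

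Finally, multiply all factors and cut off by the compact support. The measure of the support in $(t,u)$ is $O(1)$. Lemma \ref{lemma:Gevrey:Fourier_decay} then yields the claim, using $|C_{j,\kappa}|\le C_d$.
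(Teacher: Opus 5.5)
Your proof is correct, but it is organized differently from the paper's.

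\textbf{The paper's route.} The paper never rescales. It works in the natural variables $(r,y)$ at scales $(2^j,1/R)$ and iterates.
First, for fixed $r$ it forms the tangential integral $\cA(r)$ and differentiates under the integral sign in $r$. Each $r$-derivative either hits $r^{d-1}$ or hits the phase, and in the latter case it only produces a factor $-i\,\Phi_{\kappa^*}^{-1}(y)\cdot\xi$ of modulus at most one.
Second, it applies the Fourier decay lemma in $\R^{d-1}$ at frequency $-R\mathbf m'$ to each resulting term. This uses that each integrand is Gevrey in $y$ with constant $B\approx R$, on a support of measure $\approx R^{1-d}$. The outcome is $\cA\in\cG^1(I_j:Ce^{-c|\mathbf m'|^{1/s}},C)$.
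Third, it multiplies by $\phi_j$ and applies the one-dimensional decay lemma at frequency $\pi m_1 2^{-j}$.
In this setup the phase needs no recentering. In $y$ its Gevrey constant $CR$ matches that of $\eta_{j,\kappa}\circ\Phi_{\kappa^*}^{-1}$, and this size is exactly offset by the frequency $R\mathbf m'$.

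\textbf{Your route.} Your anisotropic rescaling to $(t,u)$ puts every factor at unit scale. A single joint Gevrey estimate and one application of the $d$-dimensional decay lemma then suffice. The product form of the bound follows from $|(m_1,\mathbf m')|^{1/s}\ge\tfrac12\bigl(|m_1|^{1/s}+|\mathbf m'|^{1/s}\bigr)$, so the two forms of the estimate are equivalent up to constants.

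\textbf{The cost, and two small corrections.} The price is the one you identified. After rescaling, the phase has zeroth-order size $\sim R$, and Lemma \ref{lemma:Gevrey:composition} as stated charges the sup norm of the inner map through $B_0$. So you must first subtract the constant part.
\begin{itemize}
\item Watch the sign: with your $E$, the decomposition is $-(R-2^jt)\Phi_{\kappa^*}^{-1}(\pi u/R)\cdot\xi=-R\,\omega_\kappa\cdot\xi+E(t,u)$, not $R\,\omega_\kappa\cdot\xi+E(t,u)$.
\item Since $E$ is real-valued, composing with $z\mapsto e^{iz}$ requires no boundedness of $E$, because every derivative of $e^{iz}$ has modulus one on $\R$. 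Boundedness of $E$ matters only through $B_0$ in the composition lemma.
\end{itemize}

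\textbf{Comparison.} Your argument is shorter and treats the radial and tangential directions symmetrically. The paper's argument avoids the recentering and the multivariable anisotropic bookkeeping. It also separates the radial and tangential mechanisms, including the intermediate fact that $\cA$ is analytic in $r$.
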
 
\begin{proof}
Fix $j<0$, $\kappa\in \cK_j$, 
 \({\bf m}=(m_1,{\bf m'})\in\mathbb Z\times\mathbb Z^{d-1}\),  and $\xi\in\R^d$ with $|\xi| \leq 1$. Starting from the formula for $\psi_{j,\kappa,\mathbf{m}}$, and using the  change of variables $(r,\omega) \mapsto (r,y)$ with $y=\Phi_{\kappa^*}(\omega)$, we write
\begin{equation}\label{eqn:radial_FT}
\widehat{\psi}_{j,\kappa,{\bf m}}(\xi) = C_{j,\kappa}\int_{I_j} 2^{-j/2}\phi_j(r)\cA(r) e^{\pi i m_1 2^{-j}r}  dr,
\end{equation}
where
\[
\cA(r) = \int_{B_{d-1}(\pi/R)} \eta_{j,\kappa}(\Phi_{\kappa^*}^{-1}(y)) e^{-i r \Phi_{\kappa^*}^{-1}(y)\cdot \xi} r^{d-1}J_{\kappa^*}(y) e^{i R {\bf m}'\cdot y} dy.
\]
Here, we suppress the dependence of $\cA$ on parameters $j,\kappa,\xi,\mathbf{m}',R$ for notational simplicity. In this expression, $J_{\kappa^*}(y)$ is the Jacobian of the coordinate transformation $\omega = \Phi_{\kappa^*}^{-1}(y)$, so that $d\sigma(\omega)|_{\kappa^*} = J_{\kappa^*}(y) dy$. We are able to restrict this integral to $B_{d-1}(\pi/R)$ because the cutoff $\eta_{j,\kappa}(\omega)$ vanishes on the complement of $\kappa^*$.

Clearly, the map $r\mapsto \cA(r)$ is smooth on $I_j$. We claim that
\begin{equation}\label{eqn:A_gev}
| \partial_r^k \cA(r)| \leq C C^k k! e^{-c |\mathbf{m}'|^{1/s}}, \qquad r \in I_j,
\end{equation}
for constants $C,c > 0$ depending only on $d$ and $s$. In other words, we are claiming that $r \mapsto \cA(r)$ is Gevrey--$1$ with constants $(Ce^{-c|\mathbf{m}'|^{1/s}},C)$. To prove \eqref{eqn:A_gev}, we differentiate under the integral defining $\cA(r)$. Each $r$ derivative either hits the exponential
$e^{-i r \Phi_{\kappa^*}^{-1}(y)\cdot \xi}$, or hits the polynomial $r^{d-1}$. Thus, by the Leibniz rule, $\partial_r^k \cA(r)$ is the sum of $k+1$ terms
\begin{equation}\label{eqn:main_term}
\cA_{k_1,k_2}(r) = \binom{k}{k_1} \int_{B_{d-1}(\pi/R)} \eta_{j,\kappa}(\Phi_{\kappa^*}^{-1}(y)) (-i)^{k_1} (\Phi_{\kappa^*}^{-1}(y)\cdot \xi)^{k_1} e^{-i r \Phi_{\kappa^*}^{-1}(y)\cdot \xi} (\partial_r^{k_2} r^{d-1})J_{\kappa^*}(y) e^{i R {\bf m}'\cdot y} dy,
\end{equation}
over integers $k_1,k_2 \geq 0$ with $k_1 + k_2 = k$.

For fixed $k_1,k_2$, $r\in I_j$ and $|\xi|\leq 1$, we consider the expression
\[
T_{k_1,k_2}(y) = \eta_{j,\kappa}(\Phi_{\kappa^*}^{-1}(y)) (\Phi_{\kappa^*}^{-1}(y)\cdot \xi)^{k_1} e^{-i r \Phi_{\kappa^*}^{-1}(y)\cdot \xi} (\partial_r^{k_2} r^{d-1})J_{\kappa^*}(y), \qquad y\in B_{d-1}(\pi/R).
\]
Up to a constant factor, the integral \eqref{eqn:main_term} is then the value of the Fourier transform of $T_{k_1,k_2}(y)$ at frequency $-R\mathbf{m}'$.

The function $T_{k_1,k_2}(y)$ is the product of four factors that we consider individually. For the first factor, recall condition (A3) of the angular cutoff, which implies that
\[
\| \partial^\alpha_y (\eta_{j,\kappa} \circ \Phi_{\kappa^*}^{-1}) \|_{L^\infty(B_{d-1}(\pi/R))} \leq C_1 C_2^{|\alpha|} (\alpha !)^s (\pi/R)^{-|\alpha|}.
\]
Thus, 
\begin{equation}\label{eqn:factor1}
\eta_{j,\kappa} \circ \Phi_{\kappa^*}^{-1} \in \cG^s(B_{d-1}(\pi/R): C,CR).
\end{equation}
Recall Lemma \ref{lem:cap-chart-jacobian},  which states that the Jacobian $J_{\kappa^*}(y)$ satisfies \begin{equation}\label{eqn:factor2}
J_{\kappa^*} \in \cG^s(B_{d-1}(\pi/R):C,C).
\end{equation}
Also, note that
\begin{equation}\label{eqn:factor3}
|\partial_r^{k_2} r^{d-1}| \leq d^{k_2} r^{d-1-k_2} \leq d^{k_2} R^{d-1-k_2} \leq C^{k} R^{d-1}.
\end{equation}
Finally, we discuss the regularity of the mapping
\[ y\mapsto (\Phi_{\kappa^*}^{-1}(y)\cdot \xi)^{k_1} e^{-ir\Phi_{\kappa^*}^{-1}(y)\cdot \xi} . \]
First, consider the univariate function
\begin{equation}\label{eqn:gev1}
t \in [-1,1] \mapsto t^{k_1} e^{-irt}
\end{equation}
This is a product of the function $t \mapsto e^{-irt}$ in $\cG^1([-1,1]: 1,r)$ and the function $t \mapsto t^{k_1}$ in $\cG^1([-1,1]: k_1!,1)$. Therefore, the function \eqref{eqn:gev1} belongs to $\cG^1([-1,1]:k_1!,2r)$.  Since $r \leq R$, we can say that \eqref{eqn:gev1} belongs to $\cG^1([-1,1]:k_1!,2R)$. Notice that the map $y \in B_{d-1}(\pi/R) \mapsto \Phi_{\kappa^*}^{-1}(y) \cdot \xi$ is real analytic uniformly for all $|\xi| \leq 1$ and $\kappa$, hence this map is in $\cG^1(B_{d-1}(\pi/R) : C,C)$. By the composition law (see Lemma \ref{lemma:Gevrey:composition}), we deduce that
\begin{equation}\label{eqn:factor4}
y \in B_{d-1}(\pi/R) \mapsto (\Phi_{\kappa^*}^{-1}(y) \cdot \xi)^{k_1} e^{-i r \Phi_{\kappa^*}^{-1}(y) \cdot \xi} \mbox{ is in } \cG^1(B_{d-1}(\pi/R) : k_1!, CR).
\end{equation}
By the product property of Gevrey classes (see Lemma \ref{lemma:Gevrey:multiplication}), and by combining the facts \eqref{eqn:factor1}, \eqref{eqn:factor2}, 
\eqref{eqn:factor3} and \eqref{eqn:factor4}, we deduce that
\[
y \mapsto T_{k_1,k_2}(y) \mbox{ is in } \cG^s(B_{d-1}(\pi/R) :  C C^{k} k_1! R^{d-1}, CR)
\]
Furthermore, $T_{k_1,k_2}(y)$ is supported inside $B_{d-1}(\pi/R)$ (because $\eta_{j,\kappa}$ is supported on $\kappa^*$), and after extending by zero it may be regarded as a compactly supported function on \(\mathbb R^{d-1}\). Thus, the measure of the support of $T_{k_1,k_2}(y)$ is less than $CR^{1-d}$. Therefore, Lemma \ref{lemma:Gevrey:Fourier_decay} gives
\[
\left|\cA_{k_1,k_2}(r)\right| =  \binom{k}{k_1} \left|\widehat{T_{k_1,k_2}}(- R{\bf m}')\right|\leq C C^{k} k_1! e^{-c|{\bf m}'|^{1/s}}, \qquad r\in I_j,
\]
where we have bounded the binomial coefficient by $2^k$.

Therefore, since $\partial^k_r \cA(r)$ is a sum of at most $k+1$ terms of the form $\cA_{k_1,k_2}(r)$,  we deduce that
\[
|\partial^k_r \cA(r)| \leq C (k+1) C^{k} k !  e^{-c|{\bf m}'|^{1/s}} \leq C' (C')^k k! e^{-c|{\bf m}'|^{1/s}}.
\]
This completes the proof of \eqref{eqn:A_gev}. Namely, the function $\cA(r)$ is in $\cG^1(I_j : C e^{-c |\mathbf{m'}|^{1/s}},C)$.

Recall properties (R2) and (R3) which state that $\phi_j \in \cG^s(I_j:C,C 2^{-j})$ and $\phi_j$ is supported on $I_j$.

By Lemma \ref{lemma:Gevrey:multiplication}, the product function
\[
b(r):=\phi_j(r)\cA(r)
\]
is in $\cG^s ( I_j: C e^{-c|{\bf m}'|^{1/s}},C 2^{-j})$ and is supported on $I_j$.

Applying Lemma \ref{lemma:Gevrey:Fourier_decay} to the function $b$ at frequency $\pi m_1 2^{-j}$ yields
\[
\left|\int_{I_j} \phi_j(r)\cA(r)e^{\pi i m_1 2^{-j}r} dr\right| \leq  C |I_j| e^{-c|m_1|^{1/s}} e^{-c|{\bf m}'|^{1/s}}.
\]
We return now to \eqref{eqn:radial_FT}. Using $|I_j|\approx 2^j$ which implies $2^{-j/2}|I_j|\approx 2^{j/2}$, and absorbing the uniformly bounded constant $C_{j,\kappa}$ into $C$, we obtain
\[
|\widehat{\psi}_{j,\kappa,{\bf m}}(\xi)| \leq C 2^{j/2} \exp{(-c|m_1|^{1/s})} \exp{(-c|{\bf m}'|^{1/s})}, \qquad |\xi|\leq 1.
\]
This proves the lemma.
\end{proof}

\subsubsection{Boundary decomposition}\label{subsec:boundary-decomposition}
We split the boundary wave packet indices into a high-frequency part and a residual part. Recall
\[
\cI^{\mathrm{bdry}}:=\{(j,\kappa,{\bf m})\in\cI : j<0\}.
\]
Fix two parameters $L_1, L_2 > 1$ and define the residual boundary set
\[
\cI^{\mathrm{bdry}}_3 := \Bigl\{(j,\kappa,{\bf m})\in\cI^{\mathrm{bdry}}: -2L_1 <j<0,\ \kappa \in  \cK_j, \ |m_1|+|{\bf m}'|< L_2 \Bigr\},
\]
and let
\[
\cI^{\mathrm{bdry}}_1:=\cI^{\mathrm{bdry}}\setminus \cI^{\mathrm{bdry}}_3.
\]
Equivalently, $(j,\kappa,{\bf m})\in\cI^{\mathrm{bdry}}_1$ if either $j\leq -2L_1$, or else $-2L_1<j<0$ and $|m_1|+|{\bf m}'|\geq L_2$.

Note that the residual boundary set satisfies
\begin{equation}\label{eqn:bound_card}
\#(\cI^{\mathrm{bdry}}_3)
\lesssim
R^{d-1}L_1 L_2^d. 
\end{equation}
To see this, note that for $j<0$, $\#\cK_j\lesssim R^{d-1}$, the number of integer lattice points $\mathbf{m}$ satisfying $|m_1|+|{\bf m}'|< L_2$ is $\lesssim  L_2^d$, while the number of integers $j$ with $-2L_1<j<0$ is $\leq 2L_1$. Multiplying these three factors gives the claim.

We establish that there exist constants $c,C >0$ determined by $d$ and $s$ so that
\begin{equation}\label{eqn:energy_bdry}
\sum_{\nu\in\cI^{\mathrm{bdry}}_1}\|\widehat{\psi_\nu}\|_{L^2(S)}^2
\leq C R^d(\exp(-cL_1) +  \exp(-c L_2^{1/s})).
\end{equation}
Because $S \subset B_d(1)$, it suffices to bound $\sum_{\nu\in\cI^{\mathrm{bdry}}_1} |\widehat{\psi_\nu}(\xi)|^2$ uniformly in $|\xi| \leq 1$. By Lemma \ref{bdrdy-Fourier-decay},
\[
|\widehat{\psi_{j,\kappa,{\bf m}}}(\xi)|
\leq C 2^{j/2}\exp(-c|m_1|^{1/s})\exp(-c|{\bf m}'|^{1/s}), \qquad|\xi|\leq 1.
\]
If $j\leq -2L_1$, then summing $2^j$ over $j\leq -2L_1$ gives a factor comparable to $2^{-2L_1}= \exp(-cL_1)$.
If $-2L_1<j<0$ and $|m_1|+|{\bf m}'|\geq L_2$, then the exponential tail in ${\bf m}$ gives a factor
$\exp(-cL_2^{1/s})$. The remaining sum over caps contributes at most a factor of $R^{d-1} \leq R^d$. This proves \eqref{eqn:energy_bdry}.

\subsection{Proof of Proposition \ref{prop:energy-estimate}}\label{proof-of-prop:energy-estimate}
Recall that the index set \eqref{eqn:indexset} decomposes as
\[
\cI=\cI^{\mathrm{int}}\cup\cI^{\mathrm{bdry}},
\] and we have the decompositions
\[
\cI^{\mathrm{int}}=\cI^{\mathrm{int}}_1\cup\cI^{\mathrm{int}}_2\cup\cI^{\mathrm{int}}_3,
\qquad
\cI^{\mathrm{bdry}}=\cI^{\mathrm{bdry}}_1\cup\cI^{\mathrm{bdry}}_3.
\]
Here, the index sets are determined by parameters $L, L_1, L_2 > 1$, which we will determine momentarily. Define the global decomposition
\[
\cI_1:=\cI^{\mathrm{int}}_1\cup\cI^{\mathrm{bdry}}_1,
\qquad
\cI_2:=\cI^{\mathrm{int}}_2,
\qquad
\cI_3:=\cI^{\mathrm{int}}_3\cup\cI^{\mathrm{bdry}}_3.
\]
By Lemma \ref{lem:interior-estimate} and the boundary energy estimate \eqref{eqn:energy_bdry},
\begin{equation}\label{eqn:energy_main}
\sum_{\nu\in\cI_1}\|\widehat{\psi_\nu}\|_{L^2(S)}^2 + \sum_{\nu\in\cI_2}\|\widehat{\psi_\nu}\|_{L^2(\R^d\setminus S)}^2
\lesssim R^d( \exp(-cL^{1/s}) + \exp(-cL_1) + \exp(-c L_2^{1/s}))
\end{equation}

According to Lemma \ref{lem:cardinality-residula-int} and the boundary counting estimate \eqref{eqn:bound_card}, if $0 < \eta < 1$ then
\[
\#(\cI_3) = \#(\cI^{\mathrm{int}}_3) + \#(\cI^{\mathrm{bdry}}_3) \lesssim_{d,\eta} 
\cM^{d-\eta}(\partial S) \biggl( R^{d-1}L^{d} + R^{d-\eta}L^{\eta} \biggr) + R^{d-1}L_1 L_2^d.
\]
While for $\eta=1$,
\[
\#(\cI_3) = \#(\cI^{\mathrm{int}}_3) + \#(\cI^{\mathrm{bdry}}_3) \lesssim_d 
 \cM^{d-1}(\partial S) \biggl( R^{d-1}L^{d} + R^{d-1} \log(R) L \biggr) + R^{d-1}L_1 L_2^d.
\]

Set $L= L_2 =  A \log(R/ \varepsilon)^s$ and $L_1 = A \log(R/ \varepsilon)$, where $A$ is a large constant determined by $s,d$. If $A$ is large enough, making these substitions for $L,L_1,L_2$ in \eqref{eqn:energy_main} yields
\[
\sum_{\nu\in\cI_1}\|\widehat{\psi_\nu}\|_{L^2(S)}^2 + \sum_{\nu\in\cI_2}\|\widehat{\psi_\nu}\|_{L^2(\R^d\setminus S)}^2
\leq   \varepsilon^2.
\]
For this choice of $L, L_1, L_2$, we obtain 
\[
\#(\cI_3) \lesssim_{d,s,\eta} 
\left\{
\begin{aligned}
&\cM^{d-\eta}(\partial S) \left(R^{d-1} \log(R/\varepsilon)^{sd} + R^{d-\eta} \log(R/\varepsilon)^{\eta s} \right) + R^{d-1} \log(R/\varepsilon)^{sd+1}, \quad 0 < \eta < 1 \\
&\cM^{d-1}(\partial S) \left(R^{d-1} \log(R/\varepsilon)^{sd} + R^{d-1} \log(R) \log(R/\varepsilon)^{s} \right) + R^{d-1} \log(R/\varepsilon)^{sd+1}, \quad \eta=1
\end{aligned}
\right.
\]
For the case $\eta =1$, observe that the term $R^{d-1} \log (R) \log(R/\varepsilon)^s$ is bounded by $R^{d-1} \log(R/\varepsilon)^{sd}$. In all cases, $\cM^{d-\eta}(\partial S) \gtrsim_d 1$ due to our assumption $\diam(S) \in [1/2,1]$. These comments, taken together, yield the simplification stated in Proposition \ref{prop:energy-estimate}.

\section{Proof of Theorems  \ref{thm:ball-frame-energy} and 
\ref{thm:ball-plunge}}\label{proof-of-main-theorems}\label{sec:proof-of-theorems} 
To prove Theorem \ref{thm:ball-frame-energy} we may reduce to the case $S \subset B_d(1)$, $1/2 \leq \diam(S) \leq 1$. Indeed, by translation and scaling, we may assume that $0\in S$ and $1/2\leq \diam(S)<1$. This is because the spatio-spectral limiting operator associated to $B_d(R)$ and $S$  has the same eigenvalues as the operator associated to $B_d(\Delta R)$, $\Delta^{-1}S$, for any scaling parameter $\Delta$. Moreover, the eigenvalues of limiting operators are preserved under translations of the domains. Then Theorem \ref{thm:ball-frame-energy} is immediate from the frame property established in Lemma \ref{lem:frame}
and the energy concentration estimate in Proposition \ref{prop:energy-estimate}. 
Indeed, Lemma \ref{lem:frame} gives a unit-norm frame $\{\psi_\nu\}_{\nu\in \cI}$ for $L^2(B_d(R))$ with absolute frame bounds.
Proposition \ref{prop:energy-estimate} provides the required decomposition $\cI=\cI_1\cup \cI_2\cup \cI_3$ and the estimates in the statement of
Theorem \ref{thm:ball-frame-energy}. 

For the proof of Theorem \ref{thm:ball-plunge}, we use the following abstract eigenvalue counting lemma for frames
(see \cite{marceca2023}), which extends a lemma from \cite{israel15eigenvalue}.
\begin{lemma}\label{lem:Romero_lem}
Let $T:\mathcal H\to\mathcal H$ be a positive, compact, self-adjoint operator on a Hilbert space $\mathcal H$
with $\|T\|\leq 1$, and let $\{\lambda_n(T)\}_{n\geq 0}$ be its eigenvalues.
Let $\{\phi_i\}_{i\in I}$ be a unit-norm frame for $\mathcal H$ with lower frame bound $A \leq 1$.
If $\cI=\cI_1\cup \cI_2\cup \cI_3$ and
\[
\sum_{i\in \cI_1}\|T\phi_i\|_{\cH}^2+\sum_{i\in \cI_2}\|(I-T)\phi_i\|_{\cH}^2\leq \frac{A}{2}\varepsilon^2,
\]
then
\[
\#\{\lambda_n(T)\in(\varepsilon,1-\varepsilon)\}\leq \frac{2}{A}\,\# \cI_3.
\]
\end{lemma}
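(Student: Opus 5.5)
The plan is to compare a trace-type quantity computed in two ways: once using the lower frame bound, and once using the splitting $\cI=\cI_1\cup\cI_2\cup\cI_3$. Let $E\subset\cH$ be the closed span of the eigenvectors of $T$ whose eigenvalues lie in $(\varepsilon,1-\varepsilon)$, and let $P$ be the orthogonal projection onto $E$. Since $T$ is compact and self-adjoint, $P$ commutes with $T$. On $E$ the spectrum of $T$ lies in $(\varepsilon,1-\varepsilon)$, so
\[
\|Tg\|\ge\varepsilon\|g\| \quad\text{and}\quad \|(I-T)g\|\ge\varepsilon\|g\| \qquad\text{for } g\in E.
\]
First I treat the case $N:=\dim E<\infty$. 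Choose an orthonormal basis $e_1,\dots,e_N$ of $E$ and apply the lower frame bound to each $e_k$. Summing over $k$ gives
\[
AN\le\sum_{k=1}^N\sum_{i\in\cI}|\langle e_k,\phi_i\rangle|^2=\sum_{i\in\cI}\|P\phi_i\|^2 .
\]

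Next I split the right-hand side according to the three index sets.
\begin{itemize}
\item For $i\in\cI_3$, I use $\|P\phi_i\|\le\|\phi_i\|=1$, so this part contributes at most $\#\cI_3$.
\item For $i\in\cI_1$, since $P\phi_i\in E$ and $TP=PT$,
\[
\|P\phi_i\|\le\varepsilon^{-1}\|TP\phi_i\|=\varepsilon^{-1}\|PT\phi_i\|\le\varepsilon^{-1}\|T\phi_i\|.
\]
\item For $i\in\cI_2$, the same argument with $I-T$ in place of $T$ gives $\|P\phi_i\|\le\varepsilon^{-1}\|(I-T)\phi_i\|$.
\end{itemize}
By the hypothesis, the $\cI_1$ and $\cI_2$ parts together contribute at most $\varepsilon^{-2}\cdot\tfrac{A}{2}\varepsilon^2=A/2$. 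Hence $AN\le\#\cI_3+A/2$, that is, $N\le \#\cI_3/A+1/2$.

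To reach the stated form I split into two cases. If $\#\cI_3\ge1$, then $1/2\le \#\cI_3/A$ because $A\le1$, so $N\le 2\#\cI_3/A$. If $\#\cI_3=0$, then $N\le1/2$ forces $N=0$, and the bound holds trivially.

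The main obstacle is excluding $\dim E=\infty$ a priori, for instance if eigenvalues accumulated inside $(\varepsilon,1-\varepsilon)$. This is handled by the same argument applied to the span of any $n$ orthonormal eigenvectors in $E$, which is still $T$-invariant. The inequalities above then give $n\le \#\cI_3/A+1/2$ for every such $n$. Hence $E$ is finite-dimensional, with the same bound, assuming $\#\cI_3<\infty$; otherwise there is nothing to prove. Counting eigenvalues with multiplicity equals $\dim E$, which completes the argument.
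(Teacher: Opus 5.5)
Your proof is correct and complete. The paper gives no proof of this lemma; it quotes it from \cite{marceca2023} (extending \cite{israel15eigenvalue}). So your projection-and-trace count supplies a self-contained argument where the paper only has a citation.

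Two remarks.

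First, the ``main obstacle'' is not one. Since $\varepsilon>0$ and $T$ is compact, its eigenvalues can accumulate only at $0$, and each nonzero eigenvalue has finite multiplicity. Hence $E$ is automatically finite-dimensional; your workaround is valid but unnecessary.

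Second, replacing the trace count by a dimension count gives a sharper conclusion. Suppose $\dim E>\#\cI_3$ (with $\#\cI_3<\infty$). Then there is a unit vector $f\in E$ with $\langle f,\phi_i\rangle=0$ for every $i\in\cI_3$. Since $\langle f,\phi_i\rangle=\langle f,P\phi_i\rangle$, the lower frame bound together with your estimates for $\|P\phi_i\|$ gives $A\le\sum_{i\in\cI_1\cup\cI_2}\|P\phi_i\|^2\le A/2$, a contradiction. Therefore $\#\{\lambda_n(T)\in(\varepsilon,1-\varepsilon)\}\le\#\cI_3$. This is stronger than $\frac{2}{A}\#\cI_3$ because $A\le 1$, and the hypothesis could even be relaxed to a strict bound $<A\varepsilon^2$.

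A minor point on your final step: you can split on $N\ge 1$ rather than $\#\cI_3\ge 1$, using $A/2\le AN/2$. This avoids any use of $A\le 1$.
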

\begin{proof}[Proof of Theorem \ref{thm:ball-plunge}]
By applying a rescaling to $S$ and the inverse scaling to $R$, we may assume without loss of generality that $R \geq 2$ is dyadic. Thus, Theorem \ref{thm:ball-frame-energy} applies to produce a unit-norm frame $\{\psi_\nu\}$ for $L^2(B_d(R))$ with frame bounds $0 < A < B < \infty$ determined by $d$, where $A \leq 1 \leq B$. Fix $\varepsilon\in(0,1/2]$ and set
\[
\varepsilon_0=\sqrt{A/2}\,\varepsilon.
\]
Apply Theorem \ref{thm:ball-frame-energy} with $\varepsilon_0$ in place of $\varepsilon$ to obtain a decomposition
$\cI=\cI_1\cup \cI_2\cup \cI_3$ such that
\[
\sum_{\nu\in \cI_1}\|\widehat{\psi_\nu}\|_{L^2(S)}^2
+
\sum_{\nu\in \cI_2}\|\widehat{\psi_\nu}\|_{L^2(\R^d\setminus S)}^2
\leq \varepsilon_0^2,
\]
and 
\[
\# \cI_3 \lesssim_{d,s,\eta}H_\eta(S,R,\varepsilon_0). 
\]
Consider the spatio-spectral limiting operator
\[
T_R=P_{B_d(R)}B_S P_{B_d(R)}:L^2(\R^d)\to L^2(\R^d).
\]
We view each $\psi_\nu\in L^2(B_d(R))$ as an element of $L^2(\R^d)$ by extending it by $0$ outside $B_d(R)$.
Since $P_{B_d(R)}$ is an orthogonal projection, we have
\[
\|T_R\psi_\nu\|_2\leq \|B_S\psi_\nu\|_2,
\qquad
\|(I-T_R)\psi_\nu\|_2\leq \|(I-B_S)\psi_\nu\|_2.
\]
By Plancherel and the definition of $B_S$,
\[
\|B_S\psi_\nu\|_2^2=\|\widehat{\psi_\nu}\|_{L^2(S)}^2,
\qquad
\|(I-B_S)\psi_\nu\|_2^2=\|\widehat{\psi_\nu}\|_{L^2(\R^d\setminus S)}^2.
\]
Therefore,
\[
\sum_{\nu\in \cI_1}\|T_R\psi_\nu\|_2^2+\sum_{\nu\in \cI_2}\|(I-T_R)\psi_\nu\|_2^2
\leq \varepsilon_0^2
=\frac{A}{2}\varepsilon^2.
\]
Applying Lemma \ref{lem:Romero_lem} with $\mathcal H=L^2(B_d(R))$ and $T=T_R$ yields
\[
\#\{k:\lambda_k(T_R)\in(\varepsilon,1-\varepsilon)\}\leq \frac{2}{A}\,\# \cI_3.
\]
Finally, since $\varepsilon_0$ is a fixed constant multiple of $\varepsilon$, we have
$\log(R/\varepsilon_0)\approx \log(R/\varepsilon)$, up to a dimensional constant, and hence $H_\eta(S,R,\varepsilon_0) \approx H_\eta(S,R,\varepsilon)$.
This gives 
\[
\#\{k:\lambda_k(T_R)\in(\varepsilon,1-\varepsilon)\}
\lesssim_{d,s,\eta}H_\eta(S,R,\varepsilon). 
\] 
This completes the proof.
\end{proof}

\section{Proof of Corollary \ref{cor:Landau-quant-esti}}\label{sec:proof-of-corollary}
For the proof of the corollary,  we use the following spectral reduction lemma. 
\begin{lemma}[Spectral reduction lemma]\label{lem:spectral-reduction}
Let \(T\) be a positive trace class contraction (an operator satisfying $0\leq T\leq I$) with eigenvalues
\(1\geq \lambda_1\geq \lambda_2\geq \cdots\ge0\). For
$\varepsilon\in (0,1/2]$, define
\[
N_\varepsilon(T)=\#\{k:\lambda_k>\varepsilon\},
\qquad
P_\varepsilon(T)=\#\{k:\varepsilon<\lambda_k<1-\varepsilon\},
\]
and define the idempotent defect
\[
D(T)=\operatorname{tr}(T-T^2)=\sum_k \lambda_k(1-\lambda_k).
\]
Then
\[
\left|N_\varepsilon(T)-\operatorname{tr}(T)\right|
\le
P_\varepsilon(T)+2 D(T).
\]
\end{lemma}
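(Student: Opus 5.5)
The argument is purely spectral: everything reduces to comparing the counting function \(N_\varepsilon(T)\) with the trace \(\operatorname{tr}(T)=\sum_k\lambda_k\), eigenvalue by eigenvalue. Since \(T\) is trace class, \(\sum_k \lambda_k<\infty\), so only finitely many \(\lambda_k\) exceed \(\varepsilon\) and all quantities in the statement are finite.

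Split the indices into three groups:
\[
A=\{k:\lambda_k\geq 1-\varepsilon\},\qquad P=\{k:\varepsilon<\lambda_k<1-\varepsilon\},\qquad Z=\{k:\lambda_k\leq\varepsilon\}.
\]
Since \(\varepsilon\leq 1/2\) we have \(1-\varepsilon\geq 1/2>\varepsilon\) or \(1-\varepsilon=\varepsilon\) only when \(\varepsilon=1/2\). In either case \(A\cup P=\{k:\lambda_k>\varepsilon\}\): if \(\varepsilon=1/2\) then \(P=\varnothing\) and \(A=\{\lambda_k\geq 1/2\}\), which differs from \(\{\lambda_k>1/2\}\) only by the indices with \(\lambda_k=1/2\). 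Those indices satisfy \(\lambda_k(1-\lambda_k)=1/4\) and will be absorbed into the defect term below, so I will handle this boundary case along the way rather than separately.

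Write
\[
N_\varepsilon(T)-\operatorname{tr}(T)=\sum_{k\in A\cup P}(1-\lambda_k)-\sum_{k\in Z}\lambda_k ,
\]
which holds when \(A\cup P\) is exactly \(\{\lambda_k>\varepsilon\}\). Then bound each of the three pieces.

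\begin{itemize}
\item On \(A\), \(\lambda_k\geq 1-\varepsilon\geq 1/2\), so \(1-\lambda_k\leq 2\lambda_k(1-\lambda_k)\).
\item On \(Z\), \(\lambda_k\leq\varepsilon\leq 1/2\), so \(\lambda_k\leq 2\lambda_k(1-\lambda_k)\).
\item On \(P\), simply \(0<1-\lambda_k<1\), so that piece contributes at most \(\#P=P_\varepsilon(T)\).
\end{itemize}

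Combining these with the triangle inequality gives
\[
\bigl|N_\varepsilon(T)-\operatorname{tr}(T)\bigr|\leq P_\varepsilon(T)+2\sum_{k\in A\cup Z}\lambda_k(1-\lambda_k)\leq P_\varepsilon(T)+2D(T).
\]

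The main thing to watch is the edge case \(\varepsilon=1/2\), where the indices with \(\lambda_k=1/2\) sit in \(A\) but are not counted by \(N_\varepsilon\). For each such index the discrepancy is \(1/2\), while its contribution to \(2D(T)\) is \(2\cdot\tfrac14=\tfrac12\). So the bound still holds if these indices are placed in \(Z\) rather than \(A\): for them \(\lambda_k=1/2\leq 2\lambda_k(1-\lambda_k)\). In general, therefore, I define \(Z=\{k:\lambda_k\leq\varepsilon\}\) and \(A=\{k:\lambda_k\geq 1-\varepsilon,\ \lambda_k>\varepsilon\}\), which makes the decomposition consistent with \(N_\varepsilon\) exactly. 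There is no other obstacle; the remaining work is bookkeeping.
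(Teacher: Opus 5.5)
Your proof is correct and follows essentially the paper's argument: the same split of the spectrum at $\varepsilon$ and $1-\varepsilon$, with $1-\lambda_k\le 2\lambda_k(1-\lambda_k)$ near $1$ and $\lambda_k\le 2\lambda_k(1-\lambda_k)$ near $0$ (the paper writes the factor as $(1-\varepsilon)^{-1}\le 2$). The only difference is cosmetic: you apply the triangle inequality to the signed identity, whereas the paper bounds $N_\varepsilon(T)-\operatorname{tr}(T)$ and $\operatorname{tr}(T)-N_\varepsilon(T)$ separately using only inclusions of index sets with nonnegative summands, so there the $\varepsilon=1/2$ edge case needs no special treatment; your redefinition of $A$ handles that case correctly.
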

Therefore, any bound for \(P_\varepsilon(T)\), together with a bound on the idempotent defect $D(T)$,
immediately yields a Weyl-type estimate for \(N_\varepsilon(T)\).
\begin{proof}
Since \(T\) is trace class and \(0\leq T\leq I\), its eigenvalues satisfy
\(\sum_k \lambda_k<\infty\) and \(0\leq \lambda_k\leq 1\). We have
\[
N_\varepsilon(T)-\operatorname{tr}(T)
=
\sum_{\lambda_k>\varepsilon}(1-\lambda_k)
-
\sum_{\lambda_k\leq \varepsilon}\lambda_k .
\]
Hence
\[
N_\varepsilon(T)-\operatorname{tr}(T)
\le
\sum_{\lambda_k>\varepsilon}(1-\lambda_k).
\]
Split the last sum into the plunge region and the near-one region:
\[
\sum_{\lambda_k>\varepsilon}(1-\lambda_k)
\le
P_\varepsilon(T)
+
\sum_{\lambda_k\geq 1-\varepsilon}(1-\lambda_k).
\]
For \(\lambda_k\geq 1-\varepsilon\),
\[
1-\lambda_k\leq (1-\varepsilon)^{-1}\lambda_k(1-\lambda_k),
\]
so
\[
N_\varepsilon(T)-\operatorname{tr}(T)
\le
P_\varepsilon(T)+(1-\varepsilon)^{-1}D(T).
\]
Similarly,
\[
\operatorname{tr}(T)-N_\varepsilon(T)
\le
\sum_{\lambda_k\leq \varepsilon}\lambda_k
\le
(1-\varepsilon)^{-1}\sum_{\lambda_k\leq \varepsilon}\lambda_k(1-\lambda_k)
\le
(1-\varepsilon)^{-1}D(T).
\]
Combining the two estimates and using \(0<\varepsilon \leq 1/2\),  we obtain 
\[
\left|N_\varepsilon(T)-\operatorname{tr}(T)\right|
\le
P_\varepsilon(T)+(1-\varepsilon)^{-1}D(T)
\le
P_\varepsilon(T)+2D(T).
\]
\end{proof} 
\begin{proof}[Proof of Corollary \ref{cor:Landau-quant-esti}] 
 By scaling invariance, we may assume without loss of generality that $1/2 \leq \diam(S) \leq 1$ and  $R \geq 2$ is dyadic. 
 
 First assume
\(0<\varepsilon<1/2\). We are assuming $\partial S$ has finite $(d-\eta)$-upper Minkowski content. By Theorem~\ref{thm:ball-plunge},  
\[P_\varepsilon(T_R)
 \lesssim_{d,s,\eta} H_\eta(S,R,\varepsilon) = 
 \begin{cases}
    \mathcal{M}^{d-\eta}(\partial S)
\left[
R^{d-1}
\log(R/\varepsilon)^{sd+1}
+
R^{d-\eta}
\log(R/\varepsilon)^{s\eta}
\right], &0<\eta<1\\
\mathcal{M}^{d-1}(\partial S) R^{d-1} \log(R/\varepsilon)^{sd+1}, & \eta=1.
\end{cases}
\]

By Theorem~1.4 of  \cite{HIM-trace-rough}, 
together with
Remarks~1.5 and~1.7 there, applied with $F=B_d(1)$ and $\gamma=1$, we obtain
\[
D(T_R)=\operatorname{tr}(T_R-T_R^2)\lesssim_{d,\eta}  \begin{cases}
    \cM^{d-\eta}(\partial S)  R^{d-\eta}, &0<\eta<1\\
  \cM^{d-1}(\partial S) R^{d-1}\log R, & \eta=1.
\end{cases}
\]
The estimates on $D(T_R)$ from \cite{HIM-trace-rough} are stated in terms of the $\eta$-perimeter of $S$, which is controlled by the $(d-\eta)$-upper Minkowski content of $\partial S$. Indeed, Remark~1.7 of \cite{HIM-trace-rough} implies $\mathrm{Per}_\eta(S)\lesssim_d \cM^{d-\eta}(\partial S)$, while Remark~1.5 of \cite{HIM-trace-rough} controls the scale parameters appearing in Theorem~1.4 of \cite{HIM-trace-rough}. Since \(R\ge2\), \(0<\varepsilon<1/2\), and \(sd+1>1\),
\[
\log R\leq \log(R/\varepsilon)
\leq \log(R/\varepsilon)^{sd+1}.
\]
Therefore, 
\[
D(T_R) \lesssim_{d,\eta} H_\eta(S,R,\varepsilon). 
\] 

Since  $T_R$ is   a positive trace-class contraction, we apply Lemma~\ref{lem:spectral-reduction} to obtain 
\[
|N_\varepsilon(T_R)-\operatorname{tr}(T_R)|
\le
P_\varepsilon(T_R)+2D(T_R).
\]
We deduce that
\[
|N_\varepsilon(T_R)-\operatorname{tr}(T_R)|
\lesssim_{d,s,\eta}
H_\eta(S,R,\varepsilon)= H_\eta(S,R,\varepsilon^*).
\]
 
Now assume \(1/2\leq\varepsilon<1\). Set
\[
\tau:=(1-\varepsilon)/2.
\]
Then \(0<\tau\leq 1/4\), and
\[
N_\tau(T_R)-N_\varepsilon(T_R)
=
\#\{k:\tau<\lambda_k(T_R)\leq \varepsilon\}
\le
\#\{k:\tau<\lambda_k(T_R)<1-\tau\}
=
P_\tau(T_R).
\]
Hence,
\[
|N_\varepsilon(T_R)-\operatorname{tr}(T_R)|
\le
|N_\tau(T_R)-\operatorname{tr}(T_R)|+P_\tau(T_R).
\]
Applying  the already proved case to \(\tau\), together with the plunge bound for \(P_\tau(T_R)\), yields
\[
|N_\varepsilon(T_R)-\operatorname{tr}(T_R)|
\lesssim_{d,s,\eta} H_\eta(S,R,\tau).
\]
Since \(\tau=(1-\varepsilon)/2\),  we can check that $H_\eta(S,R,\tau) \lesssim H_\eta(S,R,1-\varepsilon)$. So, we deduce that
\[
|N_\varepsilon(T_R) - \tr(T_R)| \lesssim_{d,s,\eta} H_\eta(S,R,1-\varepsilon) = H_\eta(S,R,\varepsilon^*).
\]
Thus, the desired estimate also holds for \(1/2 \leq \varepsilon < 1\).

Finally, using
\[
\tr(T_R)=(2\pi)^{-d}|B_d(R)||S|
\]
we obtain
\[
\left|
N_\varepsilon(R)-(2\pi)^{-d}|B_d(R)||S|R^d
\right|
\lesssim_{d,s,\eta}
H_\eta(S,R,\varepsilon^*).
\]

\end{proof} 

\printbibliography

\end{document}